\providecommand{\U}[1]{\protect\rule{.1in}{.1in}}
\providecommand{\U}[1]{\protect\rule{.1in}{.1in}}
\providecommand{\U}[1]{\protect\rule{.1in}{.1in}}
\providecommand{\U}[1]{\protect\rule{.1in}{.1in}}
\providecommand{\U}[1]{\protect\rule{.1in}{.1in}}
\providecommand{\U}[1]{\protect\rule{.1in}{.1in}}
\providecommand{\U}[1]{\protect\rule{.1in}{.1in}}
\newtheorem{theorem}{Theorem}[section]
\newtheorem{corollary}[theorem]{Corollary}
\newtheorem{definition}[theorem]{Definition}
\newtheorem{example}[theorem]{Example}
\newtheorem{lemma}[theorem]{Lemma}
\newtheorem{proposition}[theorem]{Proposition}
\newtheorem{remark}[theorem]{Remark}
\newtheorem{remarks}[theorem]{Remarks}
\begin{document}
\title[Central measures and Littelmann paths]{Harmonic functions on multiplicative graphs and inverse Pitman transform on
infinite random paths}
\date{February, 2015}
\author{C\'{e}dric Lecouvey, Emmanuel Lesigne and Marc Peign\'{e}}
\maketitle

\begin{abstract}
We introduce and characterize central probability distributions on Littelmann
paths. Next we establish a law of large numbers and a central limit theorem
for the generalized Pitmann transform. We then study harmonic functions on
multiplicative graphs defined from the tensor powers of finite-dimensional Lie
algebras representations. Finally, we show there exists an inverse of the
generalized Pitman transform defined almost surely on the set of infinite
paths remaining in the Weyl chamber and explain how it can be computed.

\end{abstract}

\section{Introduction}

In this paper we apply algebraic and combinatorial tools coming from
representation theory of Lie algebras to the study of random paths.\ In
\cite{LLP} and \cite{LLP3} we investigate the random Littelmann path defined
from a simple module $V$ of a Kac-Moody algebra $\mathfrak{g}$ and use the
generalized Pitmann transform $\mathcal{P}$ introduced by Biane, Bougerol and
O'Connell \cite{BBO} to obtain its conditioning to stay in the dominant Weyl
chamber of $\mathfrak{g}$.\ Roughly speaking, this random path is obtained by
concatenation of elementary paths randomly chosen among the vertices of the
crystal graph $B$ associated to $V$ following a distribution depending on the
graph structure of $B$.\ It is worth noticing that for $\mathfrak{g=sl}_{2}$,
this random path reduces to the random walk on $\mathbb{Z}$ with steps
$\{\pm1\}$ and the transform $\mathcal{P}$ is the usual Pitman transform
\cite{Pit}.\ Also when $V$ is the defining representation of $\mathfrak{g}%
=\mathfrak{sl}_{n+1}$, the vertices of $B$ are simply the paths linking $0$ to
each vector of the standard basis of $\mathbb{R}^{n+1}$ and we notably
recovered some results by O'Connell exposed in \cite{OC1}. It appears that
many natural random walks can in fact be realized from a suitable choice of
the representation $V$.

We will assume here that $\mathfrak{g}$ is a simple (finite-dimensional) Lie
algebra over $\mathbb{C}$ of rank $n$.\ The irreducible finite-dimensional
representations of $\mathfrak{g}$ are then parametrized by the dominant
weights of $\mathfrak{g}$ which are the elements of the set $P_{+}%
=P\cap\mathcal{C}$ where $P$ and $\mathcal{C}$ are the weight lattice and the
dominant Weyl chamber of $\mathfrak{g}$, respectively. The random path
$\mathcal{W}$ we considered in \cite{LLP3} is defined from the crystal
$B(\kappa)$ of the irreducible $\mathfrak{g}$-module $V(\kappa)$ with highest
weight $\kappa\in P_{+}$ ($\kappa$ is fixed for each $\mathcal{W}$). The
crystal $B(\kappa)$ is an oriented graph graded by the weights of
$\mathfrak{g}$ whose vertices are Littelmann paths of length $1$. The vertices
and the arrows of $B(\kappa)$ are obtained by simple combinatorial rules from
a path $\pi_{\kappa}$ connecting $0$ to $\kappa$ and remaining in
$\mathcal{C}$ (the highest weight path). We endowed $B(\kappa)$ with a
probability distribution $p$ compatible with the weight graduation defined
from the choice of a $n$-tuple $\tau$ of positive reals (a positive real for
each simple root of $\mathfrak{g}$).\ The probability distribution considered
on the successive tensor powers $B(\kappa)^{\otimes\ell}$ is the product
distribution $p^{\otimes\ell}$.\ It has the crucial property to be central:
two paths in $B(\kappa)^{\otimes\ell}$ with the same ends have the same
probability. We can then define, following the classical construction of a
Bernoulli process, a random path $\mathcal{W}$ with underlying probability
space $(B(\kappa)^{\otimes\mathbb{Z}_{\geq0}},p^{\otimes\mathbb{Z}_{\geq0}})$
as the direct limit of the spaces $(B(\kappa)^{\otimes\ell},p^{\otimes\ell}%
)$.\ The trajectories of $\mathcal{W}$ are the concatenations of the
Littelmann paths appearing in $B(\kappa)$.\ It makes sense to consider the
image of $\mathcal{W}$ by the generalized Pitman transform $\mathcal{P}$. This
yields a Markov process $\mathcal{H=P(W)}$ whose trajectories are the
concatenations of the paths appearing in $B(\kappa)$ which remain in the
dominant Weyl chamber $\mathcal{C}$.\ When the drift of $\mathcal{W}$ belongs
to the interior of $\mathcal{C}$, we establish in \cite{LLP3} that the law of
$\mathcal{H}$ coincides with the law of $\mathcal{W}$ conditioned to stay in
$\mathcal{C}$.\ By setting $W_{\ell}=\mathcal{W}(\ell)$ for any positive
integer $\ell$, we obtain in particular a Markov chain $W=(W_{\ell})_{\ell
\geq1}$ on the dominant weights of $\mathfrak{g}$.

In the spirit of the works of Kerov and Vershik, one can define central
probability measures on the space $\Omega_{\mathcal{C}}$ of infinite
trajectories associated to $\mathcal{H}$ (i.e. remaining in $\mathcal{P}%
$).\ These are the probability measures giving the same probability to any
cylinders $C_{\pi}$ and $C_{\pi^{\prime}}$ issued from paths $\pi$ and
$\pi^{\prime}$ of length $\ell$ remaining in $\mathcal{C}$ with the same ends.
Alternatively, we can consider the multiplicative graph $\mathcal{G}$ with
vertices the pairs $(\lambda,\ell)\in P_{+}\times\mathbb{Z}_{\geq0}$ and
weighted arrows $(\lambda,\ell)\overset{m_{\lambda,\kappa}^{\Lambda}%
}{\rightarrow}(\Lambda,\ell+1)$ where $m_{\lambda,\kappa}^{\Lambda}$ is the
multiplicity of the representation $V(\Lambda)$ in the tensor product
$V(\lambda)\otimes V(\kappa)$.\ Each central probability measure on
$\Omega_{\mathcal{C}}$ is then characterized by the harmonic function
$\varphi$ on $\mathcal{G}$ associating to each vertex $(\lambda,\ell)$, the
probability of any cylinder $C_{\pi}$ where $\pi$ is any path of length $\ell$
remaining in $\mathcal{C}$ and ending at $\lambda$. Finally, a third
equivalent way to study central probability measures on $\Omega_{\mathcal{C}}$
is to define a Markov chain on $\mathcal{G}$ whose transition matrix is
computed from the harmonic function $\varphi.$ We refer to Paragraph
\ref{subsec_graphs} for a detailed review.

When $\mathfrak{g}=\mathfrak{sl}_{n+1}$, the elements of $P_{+}$ can be
regarded as the partitions $\lambda=(\lambda_{1}\geq\cdots\geq\lambda_{n}%
\geq0)\in\mathbb{Z}^{n}$. Moreover, if we choose $V(\kappa)=V$, the defining
representation of $\mathfrak{g}=\mathfrak{sl}_{n+1}$, we have $m_{\lambda
,\kappa}^{\Lambda}\neq0$ if and only if the Young diagram of $\Lambda$ is
obtained by adding one box to that of $\lambda$.\ The connected component of
$\mathcal{G}$ obtained from $(\emptyset,0)$ thus coincides with the Young
lattice $\mathcal{Y}_{n}$ of partitions with at most $n$ parts (one can obtain
the whole Young lattice $\mathcal{Y}$ by working with $\mathfrak{g}%
=\mathfrak{sl}_{\infty}$).\ In that case, Kerov and Vershik (see \cite{Ker})
completely determined the harmonic function on $\mathcal{Y}$.\ They showed
that these harmonic functions have nice expressions in terms of generalized
Schur functions.

In \cite{Pit} Pitman established that the usual (one-dimensional) Pitman
transform is almost surely invertible on infinite trajectories (i.e.
reversible on a space of trajectories of probability $1$). It is then a
natural question to ask wether its generalized version $\mathcal{P}$ shares
the same invertibility property. Observe that in the case of the defining
representation of $\mathfrak{sl}_{n+1}$ (or $\mathfrak{sl}_{\infty}$), the
generalized Pitmann transform can be expressed in terms of a
Robinson-Schensted-Knuth (RSK) type correspondence.\ Such an invertibility
property was obtained by O'Connell in \cite{OC1} (for usual RSK related to
ordinary Schur functions) and recently extended by Sniady \cite{Snia} (for the
generalized version of RSK used by Kerov and Vershik and related to the
generalized Schur functions).{ Our result shows that this invertibility
property survives beyond type }$A$ and for random paths constructed from any
irreducible representation.

\bigskip

In what follows, we first prove that the probability distributions $p$ on
$B(\kappa)$ we introduced in \cite{LLP}, \cite{LLP2} and \cite{LLP3} are
precisely all the possible distributions yielding central distributions on
$B(\kappa)^{\otimes\ell}$. We believe this will make the restriction we did in
these papers more natural. We also establish a law of large numbers and a
central limit theorem for the Markov process $\mathcal{H}$.\ Here we need our
assumption that $\mathfrak{g}$ is finite-dimensional since in this case
$\mathcal{P}$ has a particular simple expression as a composition of
(ordinary) Pitman transforms. Then we determine the harmonic functions on the
multiplicative graph $\mathcal{G}$ for which the associated Markov chain
verifies a law of large numbers. We establish in fact that these Markov chains
are exactly the processes $H$ defined in \cite{LLP} and have simple
expressions in terms of the Weyl characters of $\mathfrak{g}$. This can be
regarded as an analogue of the result of Kerov and Vershik determining the
harmonic functions on the Young lattice.\ Finally, we prove that the
generalized Pitman transform $\mathcal{P}$ is almost surely invertible and
explain how its inverse can be computed. Here we will extend the approach
developed by Sniady in \cite{Snia} for the generalized RSK to our context.

\bigskip

The paper is organized as follows. In Section 2, we recall some background on
continuous time Markov processes. Section 3 is a recollection of results on
representation theory of Lie algebras and the Littelmann path model.\ We state
in Section 4 the main results of \cite{LLP3} and prove that the probability
distributions $p$ introduced in \cite{LLP} are in fact the only possible
yielding central measures on trajectories. The law of large numbers and the
central limit theorem for $\mathcal{H}$ are established in Section 5. We study
the harmonic functions of the graphs $\mathcal{G}$ in Section 6.\ In Section 7
we show that the spaces of trajectories for $\mathcal{W}$ and $\mathcal{H}$
both have the structure of dynamical systems coming from the shift
operation.\ We then prove that these dynamical systems are intertwined by
$\mathcal{P}$. Finally, we establish the existence of a relevant inverse of
$\mathcal{P}$ in Section 7.

\bigskip

\noindent\textbf{MSC classification:} 05E05, 05E10, 60G50, 60J10, 60J22.

\section{Random paths}

\subsection{Background on Markov chains}

\label{subsec-Markov} Consider a probability space $(\Omega,{\mathcal{F}%
},{\mathbb{P}})$ and a countable set $M$. A sequence $Y=(Y_{\ell})_{\ell\geq
0}$ of random variables defined on $\Omega$ with values in $M$ is a
\textit{Markov chain} when
\[
{\mathbb{P}}(Y_{\ell+1}=\mu_{\ell+1}\mid Y_{\ell}=\mu_{\ell},\ldots,Y_{0}%
=\mu_{0})={\mathbb{P}}(Y_{\ell+1}=\mu_{\ell+1}\mid Y_{\ell}=\mu_{\ell})
\]
for any any $\ell\geq0$ and any $\mu_{0},\ldots,\mu_{\ell},\mu_{\ell+1}\in M$.
The Markov chains considered in the sequel will also be assumed time
homogeneous, that is ${\mathbb{P}}(Y_{\ell+1}=\lambda\mid Y_{\ell}%
=\mu)={\mathbb{P}}(Y_{\ell}=\lambda\mid Y_{\ell-1}=\mu)$ for any $\ell\geq1$
and $\mu,\lambda\in M$.\ For all $\mu,\lambda$ in $M$, the transition
probability from $\mu$ to $\lambda$ is then defined by
\[
\Pi(\mu,\lambda)={\mathbb{P}}(Y_{\ell+1}=\lambda\mid Y_{\ell}=\mu)
\]
and we refer to $\Pi$ as the transition matrix of the Markov chain $Y$. The
distribution of $Y_{0}$ is called the initial distribution of the chain $Y$.

{\textit{A continuous time Markov process} ${\mathcal{Y}}=({\mathcal{Y}%
}(t))_{t\geq0}$ on $(\Omega,{\mathcal{F}},{\mathbb{P}})$ with values in
${\mathbb{R}}^{n}$ is a measurable family of random variables defined on
$(\Omega,{\mathcal{F}},{\mathbb{P}})$ such that, for any integer $k\geq1$ and
any $0\leq t_{1}<\cdots<t_{k+1}$ the conditional distribution $^{(}%
$\footnote{Let us recall briefly the definition of the conditional
distribution of a random variable given another one. Let $X$ and $Y$ be random
variables defined on some probability space $(\Omega,\mathcal{F},\mathbb{P})$
with values respectively in $\mathbb{R}^{n}$ and $\mathbb{R}^{m},n,m\geq1$.
Denote by $\mu_{X}$ the distribution of $X$, it is a probability measure on
$\mathbb{R}^{n}$. The conditional distribution of $Y$ given $X$ is defined by
the following \textquotedblleft disintegration\textquotedblright\ formula: for
any Borelian sets $A\subset\mathbb{R}^{n}$ and $B\subset\mathbb{R}^{m}$
\[
\mathbb{P}\Bigl((X\in A)\cap(Y\in B)\Bigr)=\int_{A}\mathbb{P}(Y\in B\mid
X=x)\,\mathrm{d}\mu_{X}(x).
\]
Notice that the function $x\mapsto\mathbb{P}(Y\in B\mid X=x)$ is a
Radon-Nicodym derivative with respect to $\mu_{X}$ and is thus just defined
modulo the measure $\mu_{X}$. The measure $B\mapsto\mathbb{P}(Y\in B\mid X=x)$
is called the \textbf{conditional distribution of $Y$ given $X=x$}.}$^{)}$ of
$\mathcal{Y}(t_{k+1})$ given $({\mathcal{Y}}(t_{1}),\cdots,{\mathcal{Y}}%
(t_{k}))$ is equal to the conditional distribution of $\mathcal{Y}(t_{k+1})$
given ${\mathcal{Y}}(t_{k})$; in other words, for almost all $(y_{1}%
,\cdots,y_{k})$ with respect to the distribution of the random vector
$({\mathcal{Y}}(t_{1}),\cdots,{\mathcal{Y}}(t_{k}))$ and for all Borelian set
$B\subset\mathbb{R}^{n}$
\[
{\mathbb{P}}({\mathcal{Y}}(t_{k+1})\in B\mid{\mathcal{Y}}(t_{1})=y_{1}%
,\cdots,{\mathcal{Y}}(t_{k})=y_{k})={\mathbb{P}}({\mathcal{Y}}(t_{k+1})\in
B\mid{\mathcal{Y}}(t_{k})=y_{k}).
\]
We refer to the book \cite{Dy}, chapter 3, for a description of such
processes.}

{From now on, we consider a $\mathbb{R}^{n}$-valued Markov process
$(\mathcal{Y}(t))_{t\geq0}$ defined on $(\Omega,\mathcal{F},\mathbb{P})$ and
we assume the following conditions:}

\begin{enumerate}
\item[(i)] $M\subset\mathbb{R}^{n}$

\item[(ii)] for any integer $\ell\geq0$
\begin{equation}
Y_{\ell}:={\mathcal{Y}}(\ell)\in M\qquad{\mathbb{P}}{\mathrm{-almost\ surely}%
}. \label{discreteversion}%
\end{equation}
It readily follows that the sequence $Y=(Y_{\ell})_{\ell\geq0}$ is a
$M$-valued Markov chain.

{\color{green} }

\item[(iii)] for any integer $\ell\geq0$, the conditional distribution of
$(\mathcal{Y}(t))_{t\geq\ell}$ given $Y_{\ell}$ is equal to the one of
$(\mathcal{Y}(t))_{t\geq0}$ given $Y_{0}$; in other words, for any Borel set
$B\subset(\mathbb{R}^{n})^{\otimes\lbrack0,+\infty\lbrack}$ and any
$\lambda\in M$, one gets
\[
\mathbb{P}((\mathcal{Y}(t))_{t\geq\ell}\in B\mid Y_{\ell}=\lambda
)=\mathbb{P}((\mathcal{Y}(t))_{t\geq0}\in B\mid Y_{0}=\lambda).
\]
%\begin{equation}
%{\mathbb{P}}({\mathcal{Y}}(t+1)\in\mathrm{d}y\mid{\mathcal{Y}}(s+1)\in
%\mathrm{d}x)={\mathbb{P}}({\mathcal{Y}}(t)\in\mathrm{d}y\mid{\mathcal{Y}%
%}(s)\in\mathrm{d}x). \label{invariancetranslation}%
%\end{equation}
%Combining this condition with the Markov property, one checks that for any
%$T\geq1$ and $x\in{\mathbb{R}}^{n}$, the conditional distribution of the
%process $({\mathcal{Y}}(t+1))_{t\geq T}$ given
%$({\mathcal{Y}}(T+1)=x)$ is equal to the one of $({\mathcal{Y}}(t))_{t\geq T}$
%given  $({\mathcal{Y}}(T)=x)$.
%EMMANUEL PROPOSE DE DIRE SIMPLEMENT: For any $0\leq s\leq t$, the conditional distribution of $ \mathcal{Y}(t+1)$ given $\mathcal{Y}(s+1)=x$ coincides with the   conditional distribution of $ \mathcal{Y}(t)$ given $\mathcal{Y}(s)=x$.
%car
%LE PROBLEME EST QUE  $\mathcal{Y}(t) $  et $\mathcal{Y}(s) $ ne sont ps forcément équivalente (et elle ne le sont pas souvent quand on part d un point donn\'e \`a $t=0$!) MAIS A T ON BESOIN DE CETTE INVARIANCE; JE NE VOIS PAS O\`U!

\end{enumerate}

In the following, we will assume that the initial distribution of the Markov
process $({\mathcal{Y}}(t))_{t\geq0}$ has full support, i.e. ${\mathbb{P}%
}({\mathcal{Y}}(0)=\lambda)>0$ for any $\lambda\in M$.

\subsection{Elementary random paths}

\label{subsec-ele_path} Consider a ${\mathbb{Z}}$-lattice ${P\subset
\mathbb{R}^{n}}$ with rank {\color{blue} $n$}.\ An \emph{elementary Littelmann
path} is a piecewise continuous linear map $\pi:[0,1]\rightarrow
P_{{\mathbb{R}}}$ such that $\pi(0)=0$ and $\pi(1)\in P$.\ Two paths which
coincide up to reparametrization are considered as identical.

The set ${\mathcal{F}}$ of continuous functions from $[0,1]$ to ${\mathbb{R}%
^{n}}$ is equipped with the norm $\left\Vert \cdot\right\Vert _{\infty}$ of
uniform convergence : for any $\pi\in{\mathcal{F}}$, on has $\left\Vert
\pi\right\Vert _{\infty}:=\sup_{t\in\lbrack0,1]}\left\Vert \pi(t)\right\Vert $
where $\left\Vert \cdot\right\Vert $ denotes the euclidean norm on
${P\subset\mathbb{R}^{n}}$. Let $B$ be a \emph{finite set of elementary paths}
and fix a probability distribution $p=(p_{\pi})_{\pi\in B}$ on $B$ such that
$p_{\pi}>0$ for any $\pi\in B$. Let $X$ be a random variable with values in
$B$ defined on a probability space $(\Omega,{\mathcal{F}},{\mathbb{P}})$ and
with distribution $p$ (in other words ${\mathbb{P}}(X=\pi)=p_{\pi}\ {\text{for
any }}\pi\in B).$ {The variable $X$ admits a moment of order $1$ defined by%
\[
m:={\mathbb{E}}(X)=\sum_{\pi\in B}p_{\pi}\pi.
\]
}The concatenation $\pi_{1}\ast\pi_{2}$ of two elementary paths $\pi_{1}$ and
$\pi_{2}$ is defined by
\[
\pi_{1}\ast\pi_{2}(t)=\left\{
\begin{array}
[c]{lll}%
\pi_{1}(2t) & {\text{ for }} & t\in\lbrack0,\frac{1}{2}],\\
\pi_{1}(1)+\pi_{2}(2t-1) & {\text{ for }} & t\in\lbrack\frac{1}{2},1].
\end{array}
\right.
\]
In the sequel, ${\mathcal{C}}$ is a closed convex cone in ${P\subset
\mathbb{R}^{n}}$.

\bigskip

Let $B$ be a set of elementary paths and $(X_{\ell})_{\ell\geq1}$ a sequence
of i.i.d. random variables with same law as $X$ where $X$ is the random
variable with values in $B$ introduced just above. We define a random process
${\mathcal{W}}$ as follows: for any $\ell\in{\mathbb{Z}}_{>0}$ and
$t\in\lbrack\ell,\ell+1]$
\[
{\mathcal{W}}(t):=X_{1}(1)+X_{2}(1)+\cdots+X_{\ell-1}(1)+X_{\ell}(t-\ell).
\]
The sequence of random variables $W=(W_{\ell})_{\ell\in{\mathbb{Z}_{\geq0}}%
}:=({\mathcal{W}}(\ell))_{\ell\geq0}$ is a random walk with set of increments
$I:=\{\pi(1)\mid\pi\in B\}$.

\section{Littelmann paths}

\subsection{Background on representation theory of Lie algebras}

Let $\mathfrak{g}$ be a simple finite-dimensional Lie algebra over
$\mathbb{C}$ of rank $n$ and $\mathfrak{g}=\mathfrak{g_{+}}\oplus
\mathfrak{h}\oplus\mathfrak{g_{-}}$ a triangular decomposition.\ We shall
follow the notation and convention of \cite{Bour}.\ According to the
Cartan-Killing classification, $\mathfrak{g}$ is characterized (up to
isomorphism) by its root system $R$. This root system is determined by the
previous triangular decomposition and realized in the euclidean space
$\mathbb{R}^{n}$. We denote by $\Delta_{+}=\{\alpha_{i}\mid i\in I\}$ the set
of simple roots of $\mathfrak{g},$ by $R_{+}$ the (finite) set of positive
roots.\ We then have $n=\mathrm{card}(\Delta_{+})$ and $R=R_{+}\cup R_{-}$
with $R_{-}=-R_{+}$. The root lattice of $\mathfrak{g}$ is the integral
lattice $Q=\bigoplus_{i=1}^{n}\mathbb{Z\alpha}_{i}.$ Write $\omega
_{i},i=1,\ldots,n$ for the fundamental weights associated to $\mathfrak{g}$.
The weight lattice associated to $\mathfrak{g}$ is the integral lattice
$P=\bigoplus_{i=1}^{n}\mathbb{Z\omega}_{i}.$ It can be regarded as an integral
sublattice of $\mathfrak{h}_{\mathbb{R}}^{\ast}$ (the real form of the dual
$\mathfrak{h}^{\ast}$ of $\mathfrak{h}$). We have $\dim(P)=\dim(Q)=n$ and
$Q\subset P$.

The cone of dominant weights for $\mathfrak{g}$ is obtained by considering the
positive integral linear combinations of the fundamental weights, that is
$P_{+}=\bigoplus_{i=1}^{n}\mathbb{Z}_{\geq0}\mathbb{\omega}_{i}.$ The
corresponding open Weyl chamber is the cone $\mathring{\mathcal{C}}%
=\bigoplus_{i=1}^{n}\mathbb{R}_{>0}\mathbb{\omega}_{i}$. We also introduce its
closure $\mathcal{C}=\bigoplus_{i=1}^{n}\mathbb{R}_{\geq0}\mathbb{\omega}_{i}%
$. In type $A$, we shall use the weight lattice of $\mathfrak{gl}_{n}$ rather
than that of $\mathfrak{sl}_{n}$ for simplicity.\ We also introduce the Weyl
group ${\mathsf{W}}$ of $\mathfrak{g}$ which is the group generated by the
orthogonal reflections $s_{i}$ through the hyperplanes perpendicular to the
simple root $\alpha_{i},i=1,\ldots,n$. Each $w\in{\mathsf{W}}$ may be
decomposed as a product of the $s_{i},i=1,\ldots,n.\;$All the minimal length
decompositions of $w$ have the same length $l(w)$.\ The group ${\mathsf{W}}$
contains a unique element $w_{0}$ of maximal length $l(w_{0})$ equal to the
number of positive roots of $\mathfrak{g}$, {this $w_{0}$ is an involution and
if $s_{i_{1}}\cdots s_{i_{r}}$ is a minimal length decomposition of $w_{0}$,
we have}
\begin{equation}
R_{+}=\{\alpha_{i_{1}},s_{i_{1}}\cdots s_{i_{a}}(\alpha_{i_{a+1}})\text{ with
}a=1,\ldots,r-1\}. \label{R+w0}%
\end{equation}

\begin{example}
\label{example-C2}The root system of $\mathfrak{g}=\mathfrak{sp}_{4}$ has rank
$2$. In the standard basis $(e_{1},e_{2})$ of the euclidean space
$\mathbb{R}^{2}$, we have $\omega_{1}=(1,0)$ and $\omega_{2}=(1,1).$ So
$P=\mathbb{Z}^{2}$ and $\mathcal{C}=\{(x_{1},x_{2})\in\mathbb{R}^{2}\mid
x_{1}\geq x_{2}\geq0\}$. The simple roots are $\alpha_{1}=e_{1}-e_{2}$ and
$\alpha_{2}=2e_{2}$. We also have $R_{+}=\{\alpha_{1},\alpha_{2},\alpha
_{1}+\alpha_{2},2\alpha_{1}+\alpha_{2}\}$. The Weyl group ${\mathsf{W}}$ is
the octahedral group with $8$ elements.\ It acts on $\mathbb{R}^{2}$ by
permuting the coordinates of the vectors and flipping their sign. More
precisely, for any $\beta=(\beta_{1},\beta_{2})\in\mathbb{R}^{2}$, we have
$s_{1}(\beta)=(\beta_{2},\beta_{1})$ and $s_{2}(\beta)=(\beta_{1},-\beta_{2}%
)$. The longest element is $w_{0}=-id=s_{1}s_{2}s_{1}s_{2}$. On easily
verifies we indeed have
\[
R_{+}=\{\alpha_{1},s_{1}s_{2}s_{1}(\alpha_{2})=\alpha_{2},s_{1}s_{2}%
(\alpha_{1})=\alpha_{1}+\alpha_{2},s_{1}(\alpha_{2})=2\alpha_{1}+\alpha
_{2}\}.
\]

\end{example}

We now summarize some properties of the action of ${\mathsf{W}}$ on the weight
lattice $P$. For any weight $\beta$, the orbit ${\mathsf{W}}\cdot\beta$ of
$\beta$ under the action of ${\mathsf{W}}$ intersects $P_{+}$ in a unique
point. We define a partial order on $P$ by setting $\mu\leq\lambda$ if
$\lambda-\mu$ belongs to $Q_{+}=\bigoplus_{i=1}^{n}\mathbb{Z}_{\geq
0}\mathbb{\alpha}_{i}$.

Let $U(\mathfrak{g})$ be the enveloping algebra associated to $\mathfrak{g}.$
Each finite dimensional $\mathfrak{g}$ (or $U(\mathfrak{g})$)-module $M$
admits a decomposition in weight spaces $M=\bigoplus_{\mu\in P}M_{\mu}$ where
\[
M_{\mu}:=\{v\in M\mid h(v)=\mu(h)v\text{ for any }h\in\mathfrak{h}\ {\text{and
some}\ \mu(h)\in\mathbb{C}}\}.
\]
This means that the action of any $h\in\mathfrak{h}$ on the weight space
$M_{\mu}$ is diagonal with eigenvalue $\mu(h)$.\ In particular, $(M\oplus
M^{\prime})_{\mu}=M_{\mu}\oplus M_{\mu}^{\prime}$.\ The Weyl group
${\mathsf{W}}$ acts on the weights of $M$ and for any $\sigma\in{\mathsf{W}}$,
we have $\dim M_{\mu}=\dim M_{\sigma\cdot\mu}$. For any $\gamma\in P$, let
$e^{\gamma}$ be the generator of the group algebra ${\mathbb{C}}[P]$
associated to $\gamma$. By definition, we have $e^{\gamma}e^{\gamma^{\prime}%
}=e^{\gamma+\gamma^{\prime}}$ for any $\gamma,\gamma^{\prime}\in P$ and the
group ${\mathsf{W}}$ acts on ${\mathbb{C}}[P]$ as follows: $w(e^{\gamma
})=e^{w(\gamma)}$ for any $w\in{\mathsf{W}}$ and any $\gamma\in P$.

The character of $M$ is the Laurent polynomial in ${\mathbb{C}}[P]$
$\mathrm{char}(M)(x):=\sum_{\mu\in P}\dim(M_{\mu})e^{\mu}$ where $\dim(M_{\mu
})$ is the dimension of the weight space $M_{\mu}$.

The irreducible finite dimensional representations of $\mathfrak{g}$ are
labelled by the dominant weights. For each dominant weight $\lambda\in P_{+},$
let $V(\lambda)$ be the irreducible representation of $\mathfrak{g}$
associated to $\lambda$. The category $\mathcal{C}$ of finite dimensional
representations of $\mathfrak{g}$ over $\mathbb{C}$ is semisimple: each module
decomposes into irreducible components. The category $\mathcal{C}$ is
equivariant to the (semisimple) category of finite dimensional $U(\mathfrak{g}%
)$-modules (over $\mathbb{C}$). Roughly speaking, this means that the
representation theory of $\mathfrak{g}$ is essentially identical to the
representation theory of the \emph{associative} algebra $U(\mathfrak{g)}%
$.\ Any finite dimensional $U(\mathfrak{g})$-module $M$ decomposes as a direct
sum of irreducible $M=\bigoplus_{\lambda\in P_{+}}V(\lambda)^{\oplus
m_{M,\lambda}}$ where $m_{M,\lambda}$ is the multiplicity of $V(\lambda)$ in
$M$. Here we slightly abuse the notation and also denote by $V(\lambda)$ the
irreducible f.d. $U(\mathfrak{g})$-module associated to $\lambda.$

When $M=V(\lambda)$ is irreducible, we set $s_{\lambda}:=\mathrm{char}%
(M)=\sum_{\mu\in P}K_{\lambda,\mu}e^{\mu}$ with $\dim(M_{\mu})=K_{\lambda,\mu
}.$ Then $K_{\lambda,\mu}\neq0$ only if $\mu\leq\lambda$. Recall also that the
characters can be computed from the Weyl character formula but we do not need
this approach in the sequel.

Given $\kappa,\mu$ in $P_{+}$ and a nonnegative integer $\ell$, we define the
tensor multiplicities $f_{\lambda/\mu,\kappa}^{\ell}$ by
\begin{equation}
V(\mu)\otimes V(\kappa)^{\otimes\ell}\simeq\bigoplus_{\lambda\in P_{+}%
}V(\lambda)^{\oplus f_{\lambda/\mu,\kappa}^{\ell}}. \label{def_f}%
\end{equation}
For $\mu=0$, we set $f_{\lambda,\kappa}^{\ell}=f_{\lambda/0,\kappa}^{\ell}$.
When there is no risk of confusion, we write simply $f_{\lambda/\mu}^{\ell}$
(resp. $f_{\lambda}^{\ell}$) instead of $f_{\lambda/\mu,\kappa}^{\ell}$ (resp.
$f_{\lambda,\kappa}^{\ell}$). We also define the multiplicities $m_{\mu
,\kappa}^{\lambda}$ by
\begin{equation}
V(\mu)\otimes V(\kappa)\simeq\bigoplus_{\mu\leadsto\lambda}V(\lambda)^{\oplus
m_{\mu,\kappa}^{\lambda}} \label{step}%
\end{equation}
where the notation $\mu\leadsto\lambda$ means that $\lambda\in P_{+}$ and
$V(\lambda)$ appears as an irreducible component of $V(\mu)\otimes V(\kappa)$.
We have in particular $m_{\mu,\kappa}^{\lambda}=f_{\lambda/\mu,\kappa}^{1}$.\ 

\subsection{Littelmann path model\label{subsecLit}}

We now give a brief overview of the Littelmann path model.\ We refer to
\cite{Lit1}, \cite{Lit2}, \cite{Lit3} and \cite{Kashi} for examples and a
detailed exposition. Consider a Lie algebra $\mathfrak{g}$ and its root system
realized in the euclidean space $P_{\mathbb{R}}=\mathbb{R}^{n}$. We fix a
scalar product $\langle\cdot,\cdot\rangle$ on $P_{\mathbb{R}}$ invariant under
${\mathsf{W}}$.\ For any root $\alpha$, we set $\alpha^{\vee}=\frac{2\alpha
}{\langle\alpha,\alpha\rangle}$. We define the notion of elementary continuous
piecewise linear paths in $P_{{\mathbb{R}}}$ as we did in
\S \ \ref{subsec-ele_path}.\ Let ${\mathcal{L}}$ be the set of elementary
paths $\eta$ having \emph{only rational turning points} (i.e. whose inflexion
points have rational coordinates) and ending in $P$ i.e. such that $\eta(1)\in
P$. We then define the weight of the path $\eta$ by $\mathrm{wt}(\eta
)=\eta(1)$. Given any path $\eta\in\mathcal{L}$, we define its reverse path
$r(\eta)\in\mathcal{L}$ by
\[
r(\eta)(t)=\eta(1-t)-\eta(1).
\]
Observe the map $r$ is an involution on $\mathcal{L}$.\ Littelmann {associated
to each simple root $\alpha_{i},i=1,\ldots,n,$ some root operators $\tilde
{e}_{i}$ and $\tilde{f}_{i}$ acting on ${\mathcal{L}}\cup\{{\mathbf{0}}\}$%
}.\ We do not need their complete definition in the sequel and refer to the
above mentioned papers for a complete review. Recall nevertheless that roots
operators $\tilde{e}_{i}$ and $\tilde{f}_{i}$ essentially act on a path $\eta$
by applying the symmetry $s_{\alpha}$ on parts of $\eta$ and we have%
\begin{equation}
\tilde{f}_{i}(\eta)=r\tilde{e}_{i}r(\eta).\label{fer}%
\end{equation}
These operators therefore preserve the length of t{he paths since the elements
of }${\mathsf{W}}$ are isometries.\ Also if $\tilde{f}_{i}(\eta)=\eta^{\prime
}\neq{\mathbf{0}}$, we have
\begin{equation}
\tilde{e}_{i}(\eta^{\prime})=\eta\text{ and }\mathrm{wt}(\tilde{f}_{i}%
(\eta))=\mathrm{wt}(\eta)-\alpha_{i}.\label{etil}%
\end{equation}
By drawing an arrow $\eta\overset{i}{\rightarrow}\eta^{\prime}$ between the
two paths $\eta,\eta^{\prime}$ of ${\mathcal{L}}$ as soon as $\tilde{f}%
_{i}(\eta)=\eta^{\prime}$ (or equivalently $\eta=\tilde{e}_{i}(\eta^{\prime}%
)$), we obtain a Kashiwara crystal graph with set of vertices ${\mathcal{L}}%
$.\ By abuse of notation, we yet denote it by ${\mathcal{L}}$ which so becomes
a colored oriented graph.\ For any $\eta\in{\mathcal{L}}$, we denote by
$B(\eta)$ the connected component of $\eta$ i.e. the subgraph of
${\mathcal{L}}$ generated by $\eta$ by applying operators $\tilde{e}_{i}$ and
$\tilde{f}_{i}$, $i=1,\ldots,n$.\ For any path $\eta\in{\mathcal{L}}$ and
$i=1,\ldots,n$, set $\varepsilon_{i}(\eta)=\max\{k\in{\mathbb{Z}}_{\geq0}%
\mid\tilde{e}_{i}^{k}(\eta)={\mathbf{0}}\}$ and $\varphi_{i}(\eta)=\max
\{k\in{\mathbb{Z}}_{\geq0}\mid\tilde{f}_{i}^{k}(\eta)={\mathbf{0}}\}$.

\bigskip

The set ${\mathcal{L}}_{\min{\mathbb{Z}}}$ of \textit{integral paths is the
set of }paths $\eta$ such that $m_{\eta}(i)=\min_{t\in\lbrack0,1]}%
\{\langle\eta(t),\alpha_{i}^{\vee}\rangle\}$ belongs to ${\mathbb{Z}}$ for any
$i=1,\ldots,n$. We also recall that we have
\[
\displaystyle{\mathcal{C}}=\{x\in{\mathfrak{h}}_{{\mathbb{R}}}^{\ast}%
\mid\langle x,\alpha_{i}^{\vee}\rangle\geq0\}\text{ and }\mathring{\mathcal
{C}}=\{x\in{\mathfrak{h}}_{{\mathbb{R}}}^{\ast}\mid\langle x,\alpha_{i}^{\vee
}\rangle>0\}{\text{.}}%
\]
Any path $\eta$ such that $\operatorname{Im}\eta\subset{\mathcal{C}}$ verifies
$m_{\eta}(i)=0$ so belongs to ${\mathcal{L}}_{\min{\mathbb{Z}}}$.\ One gets the

\begin{proposition}
\ \label{Prop_HP} Let $\eta$ and $\pi$ two paths in ${\mathcal{L}}%
_{\min{\mathbb{Z}}}$. Then

\begin{enumerate}
\item[(i)] the concatenation $\pi\ast\eta$ belongs to ${\mathcal{L}}%
_{\min{\mathbb{Z}}}$,

\item[(ii)] for any $i=1,\ldots,n$ we have
\begin{equation}
\tilde{e}_{i}(\eta\ast\pi)=\left\{
\begin{array}
[c]{ll}%
\eta\ast\tilde{e}_{i}(\pi) & {\text{if }}\varepsilon_{i}(\pi)>\varphi_{i}%
(\eta)\\
\tilde{e}_{i}(\eta)\ast\pi & {\text{otherwise,}}%
\end{array}
\right.  {\text{and }}\tilde{f}_{i}(\eta\ast\pi)=\left\{
\begin{array}
[c]{ll}%
\tilde{f}_{i}(\eta)\ast\pi & {\text{if }}\varphi_{i}(\eta)>\varepsilon_{i}%
(\pi)\\
\eta\ast\tilde{f}_{i}(\pi) & {\text{otherwise.}}%
\end{array}
\right.  \label{Ten_Prod}%
\end{equation}
In particular, $\tilde{e}_{i}(\eta\ast\pi)={\mathbf{0}}$ if and only if
$\tilde{e}_{i}(\eta)={\mathbf{0}}$ and $\varepsilon_{i}(\pi)\leq\varphi
_{i}(\eta)$ for any $i=1,\ldots,n$.

\item[(iii)] $\tilde{e}_{i}(\eta)={\mathbf{0}}$ for any $i=1,\ldots,n$ if and
only if $\operatorname{Im}\eta$ is contained in ${\mathcal{C}}$.
\end{enumerate}
\end{proposition}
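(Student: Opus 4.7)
The plan is to handle the three parts in turn, using the piecewise structure of the concatenation together with the Littelmann construction of the root operators as developed in \cite{Lit1,Lit2,Kashi}. For (i), I would compute $m_{\pi\ast\eta}(i)$ directly by splitting $[0,1]$ at $1/2$: on the first half $(\pi\ast\eta)(t)=\pi(2t)$, so the minimum of $\langle(\pi\ast\eta)(t),\alpha_i^{\vee}\rangle$ over that subinterval equals $m_\pi(i)\in\Z$; on the second half $(\pi\ast\eta)(t)=\pi(1)+\eta(2t-1)$, so the corresponding minimum equals $\langle\pi(1),\alpha_i^{\vee}\rangle+m_\eta(i)$, which lies in $\Z$ because $\pi(1)\in P$ and $m_\eta(i)\in\Z$ by hypothesis. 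Taking the minimum of these two integers yields $m_{\pi\ast\eta}(i)\in\Z$ for every simple root index $i$, which is exactly the membership in $\mathcal{L}_{\min\Z}$.

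For (ii), which is the classical concatenation rule of Littelmann, the argument runs as follows. Unfolding the definition of $\tilde{e}_i$: given a path $\gamma$, one looks at the real function $f(t)=\langle\gamma(t),\alpha_i^{\vee}\rangle$, selects the largest $t_1$ at which $f$ attains its absolute minimum and the smallest $t_0<t_1$ at which $f(t_0)=f(t_1)+1$, and reflects the segment $\gamma|_{[t_0,t_1]}$ through $s_{\alpha_i}$. For $\gamma=\eta\ast\pi$ the function $f$ is glued from the functions attached to $\eta$ and $\pi$, and the quantities $\varphi_i(\eta)$ and $\varepsilon_i(\pi)$ measure precisely how far below $f(1/2)$ the function $f$ descends on each side of $1/2$. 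The comparison between $\varphi_i(\eta)$ and $\varepsilon_i(\pi)$ thus decides whether the absolute minimum of $f$ lies inside the $\eta$-part or inside the $\pi$-part, and hence on which side the local reflection takes place; this yields the two cases of the $\tilde{e}_i$-formula. The rule for $\tilde{f}_i$ follows either by the same direct analysis or by applying the involution $r$ of \eqref{fer} together with the identity $r(\eta\ast\pi)=r(\pi)\ast r(\eta)$ (up to the canonical translation that normalizes each factor to start at $0$). The ``in particular'' statement then drops out: the first case of the formula cannot produce $\mathbf{0}$ since $\varepsilon_i(\pi)>\varphi_i(\eta)\ge 0$ forces $\tilde{e}_i(\pi)\neq\mathbf{0}$, so $\tilde{e}_i(\eta\ast\pi)=\mathbf{0}$ is equivalent to $\varepsilon_i(\pi)\le\varphi_i(\eta)$ together with $\tilde{e}_i(\eta)=\mathbf{0}$.

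For (iii), I would use the standard fact in the Littelmann model that for an integral path $\eta$ the operator $\tilde{e}_i$ annihilates $\eta$ if and only if $m_\eta(i)=0$: the ``if'' direction is part of the definition of $\tilde{e}_i$, which requires $f(t_1)\le -1$ to produce a nonzero output, and the ``only if'' direction follows because when $m_\eta(i)\le -1$ one can exhibit the reflection described above and obtain $\tilde{e}_i(\eta)\neq\mathbf{0}$. Since $\eta(0)=0$ one always has $m_\eta(i)\le 0$, so $m_\eta(i)=0$ is equivalent to $\langle\eta(t),\alpha_i^{\vee}\rangle\ge 0$ for every $t\in[0,1]$; imposing this for every simple root is exactly $\operatorname{Im}\eta\subset\mathcal{C}$. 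The real obstacle in this proposition is (ii), which rests on the explicit Littelmann formulas not reproduced in the excerpt and requires a careful tracking of where the absolute minimum of the function $f$ sits on the concatenation; by contrast, (i) and (iii) are short direct consequences once the minimum function $m_\eta(i)$ is under control.
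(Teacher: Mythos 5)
The paper does not prove Proposition~\ref{Prop_HP}: it is stated as a recollection of standard facts from the Littelmann path model, with the reader referred to \cite{Lit1}, \cite{Lit2}, \cite{Lit3} and \cite{Kashi}. Your sketch reconstructs the standard argument, and its overall structure is sound: part (i) is correctly reduced to the observation that the minimum of $\langle\pi\ast\eta(\cdot),\alpha_i^{\vee}\rangle$ is the smaller of the two integers $m_{\pi}(i)$ and $\langle\pi(1),\alpha_i^{\vee}\rangle+m_{\eta}(i)$ (integrality of the shift using $\pi(1)\in P$ is the one point that needs saying, and you say it); part (iii) is the correct equivalence $\tilde{e}_i(\eta)=\mathbf{0}\Leftrightarrow m_{\eta}(i)=0$ for integral paths, combined with $\eta(0)=0$; and for part (ii) you correctly identify that the comparison of $\varphi_i(\eta)=\mathrm{wt}_i(\eta)-m_{\eta}(i)$ with $\varepsilon_i(\pi)=-m_{\pi}(i)$ locates the absolute minimum of the glued height function in one factor or the other.

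The one point to fix is your description of the extremal choices in the definition of $\tilde{e}_i$. For $\tilde{e}_i$ Littelmann takes $t_1$ \emph{minimal} among the times where the height function attains its absolute minimum (and then $t_0$ maximal below $t_1$ with the height staying $\geq f(t_1)+1$ on $[0,t_0]$); you wrote ``largest $t_1$''. This is not cosmetic: the entire content of the case split in (\ref{Ten_Prod}) beyond ``the operator acts on one side'' is the tie-breaking when $\varepsilon_i(\pi)=\varphi_i(\eta)$, where the absolute minimum is attained in both halves. With the minimal-$t_1$ convention the reflection lands in the $\eta$-factor, matching the ``otherwise'' clause $\tilde{e}_i(\eta\ast\pi)=\tilde{e}_i(\eta)\ast\pi$; with your ``largest $t_1$'' convention it would land in the $\pi$-factor and contradict the stated formula (and dually for $\tilde{f}_i$, where the roles of minimal and maximal are exchanged). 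Since your ``otherwise'' clauses are stated correctly, this reads as a slip in recalling the definition rather than a structural gap, but in a written-out proof you would need the correct conventions, as they are precisely what the boundary case tests. The ``in particular'' deduction and the reduction of the $\tilde{f}_i$-rule to the $\tilde{e}_i$-rule via $r$ and $r(\eta\ast\pi)=r(\pi)\ast r(\eta)$ are fine.
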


The following theorem summarizes crucial results by Littelmann (see
\cite{Lit1}, \cite{Lit2} and \cite{Lit3}).\ 

\begin{theorem}
\label{Th_Littel}Consider $\lambda,\mu$ and $\kappa$ dominant weights and
choose arbitrarily elementary paths $\eta_{\lambda},\eta_{\mu}$ and
$\eta_{\kappa}$ in ${\mathcal{L}}$ such that $\operatorname{Im}\eta_{\lambda
}\subset{\mathcal{C}}$, $\operatorname{Im}\eta_{\mu}\subset{\mathcal{C}}$ and
$\operatorname{Im}\eta_{\kappa}\subset{\mathcal{C}}$ and joining respectively
$0$ to $\lambda$, $0$ to $\mu$ and $0$ to $\kappa$.

\begin{enumerate}
\item[(i)] We have $B(\eta_{\lambda}):=\{\tilde{f}_{i_{1}}\cdots\tilde
{f}_{i_{k}}\eta_{\lambda}\mid k\in${$\mathbb{Z}_{\geq0}$}${\text{ and }}1\leq
i_{1},\cdots,i_{k}\leq n\}\setminus\{{\mathbf{0}}\}.$

In particular ${\mathrm{wt}}(\eta)-{\mathrm{wt}}(\eta_{\lambda})\in Q_{+}$ for
any $\eta\in B(\eta_{\lambda})$.

\item[(ii)] All the paths in $B(\eta_{\lambda})$ have the same length than
$\eta_{\lambda}$.

\item[(iii)] The paths on $B(\eta_{\lambda})$ belong to ${\mathcal{L}}%
_{\min{\mathbb{Z}}}.$

\item[(iv)] If $\eta_{\lambda}^{\prime}$ is another elementary path from $0$
to $\lambda$ such that $\operatorname{Im}\eta_{\lambda}^{\prime}$ is contained
in ${\mathcal{C}}$, then $B(\eta_{\lambda})$ and $B(\eta_{\lambda}^{\prime})$
are isomorphic as oriented graphs i.e. there exists a bijection $\theta
:B(\eta_{\lambda})\rightarrow B(\eta_{\lambda}^{\prime})$ which commutes with
the action of the operators $\tilde{e}_{i}$ and $\tilde{f}_{i}$,
$i=1,\ldots,n$.

\item[(v)] We have
\begin{equation}
s_{\lambda}=\sum_{\eta\in B(\eta_{\lambda})}e^{\eta(1)}. \label{slambda}%
\end{equation}

\item[(vi)] For any $b\in B(\eta_{\lambda})$ we have ${\mathrm{wt}}%
(b)=\sum_{i=1}^{n}(\varphi_{i}(b)-\varepsilon_{i}(b))\omega_{i}.$

\item[(vii)] For any $i=1,\ldots,n$ and any $b\in B(\eta_{\lambda})$, let
$s_{i}(b)$ be the unique path in $B(\eta_{\lambda})$ such that
\[
\varphi_{i}(s_{i}(b))=\varepsilon_{i}(b){\text{ and }}\varepsilon_{i}%
(s_{i}(b))=\varphi_{i}(b)
\]
(in other words, $s_{i}$ acts on each $i$-chain ${\mathcal{C}}_{i}$ as the
symmetry with respect to the center of ${\mathcal{C}}_{i}$). The actions of
the $s_{i}$'s extend to an action\footnote{This action, defined from the
crystal structure on paths, should not be confused with the pointwise action
of the Weyl group on the paths.} of ${\mathsf{W}}$ on ${\mathcal{L}}$ which
stabilizes $B(\eta_{\lambda})$. In particular, for any $w\in{\mathsf{W}}$ and
any $b\in B(\eta_{\lambda})$, we have $w(b)\in B(\eta_{\lambda})$ and
${\mathrm{wt}}(w(b))=w({\mathrm{wt}}(b))$.

\item[(viii)] Given any integer $\ell\geq0$, set
\begin{equation}
B(\eta_{\mu})\ast B(\eta_{\kappa})^{\ast\ell}=\{\pi=\eta\ast\eta_{1}\ast
\cdots\ast\eta_{\ell}\in{\mathcal{L}}\mid\eta\in B(\eta_{\mu}){\text{ and }%
}\eta_{k}\in B(\eta_{\kappa})\ {\text{ for any }}k=1,\ldots,\ell\}.
\label{def_crysta_*}%
\end{equation}
The graph $B(\eta_{\mu})\ast B(\eta_{\kappa})^{\ast\ell}$ is contained in
${\mathcal{L}}_{\min{\mathbb{Z}}}.$

\item[(ix)] The multiplicity $m_{\mu,\kappa}^{\lambda}$ defined in
(\ref{step}) is equal to the number of paths of the form $\mu\ast\eta$ with
$\eta\in B(\eta_{\kappa})$ contained in ${\mathcal{C}}$.

\item[(x)] The multiplicity $f_{\lambda/\mu}^{\ell}$ defined in (\ref{def_f})
is equal to cardinality of the set
\[
H_{\lambda/\mu}^{\ell}:=\{\pi\in B(\eta_{\mu})\ast B(\eta_{\kappa})^{\ast\ell
}\mid\tilde{e}_{i}(\pi)=0\ {\text{ for any }}i=1,\ldots,n{\text{ and }}%
\pi(1)=\lambda\}.
\]
Each path $\pi=\eta\ast\eta_{1}\ast\cdots\ast\eta_{\ell}\in H_{\lambda/\mu
}^{\ell}$ verifies $\operatorname{Im}\pi\subset{\mathcal{C}}$ and $\eta
=\eta_{\mu}$.
\end{enumerate}
\end{theorem}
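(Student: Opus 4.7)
All ten items are due to Littelmann. My plan is to group them by what they need and invoke Proposition \ref{Prop_HP} and the explicit definition of the root operators wherever possible.

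First I would dispose of (i)--(iii) directly. For (i), Proposition \ref{Prop_HP}(iii) applied to $\eta_{\lambda}$ (whose image lies in ${\mathcal{C}}$) gives $\tilde{e}_{i}(\eta_{\lambda})={\mathbf{0}}$ for all $i$, so the connected component must be generated by iterated $\tilde{f}_{i}$'s; the bound ${\mathrm{wt}}(\eta)-\lambda\in Q_{+}$ then follows from (\ref{etil}) by induction on the number of $\tilde{f}_{i}$'s applied. For (ii), each $\tilde{f}_{i}$ acts by applying the isometry $s_{\alpha_{i}}$ piecewise to the path, so the total length is preserved. Statement (iii) is the technical heart of the integrality check and would be handled by following the quantity $m_{\eta}(i)=\min_{t\in[0,1]}\langle\eta(t),\alpha_{i}^{\vee}\rangle$ through the reflection procedure used to define $\tilde{f}_{i}$ and verifying that it remains in ${\mathbb{Z}}$; combined with (\ref{fer}), this also handles $\tilde{e}_{i}$.

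Parts (iv) and (v) are the most substantive. For (iv), I would prove independence of the chosen highest-weight path by the Littelmann deformation argument: interpolate continuously between two dominant paths from $0$ to $\lambda$ inside ${\mathcal{C}}$ and show the crystal structure is locally constant along the family. Once (iv) is available, (v) follows by matching the weight multiplicities of $B(\eta_{\lambda})$ against the Weyl character formula, or alternatively by verifying a Pieri-type branching rule and inducting on $\lambda$ in the dominance order. With (iv)--(v) in hand, (vi) is immediate by analysing each $i$-chain through (\ref{etil}); (vii) is obtained by defining each $s_{i}$ as the reflection within each $i$-chain and checking the braid relations in rank two via (iv); and (viii) follows from iterated application of Proposition \ref{Prop_HP}(i).

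Finally, (ix) and (x) are combinatorial consequences. By Proposition \ref{Prop_HP}(ii)--(iii), a highest weight element in $B(\eta_{\mu})\ast B(\eta_{\kappa})^{\ast\ell}$ is exactly a path $\pi$ whose image lies in ${\mathcal{C}}$, which forces the initial factor to be $\eta_{\mu}$ itself; combining (i) and (iv), the number of such paths with endpoint $\lambda$ therefore coincides with the multiplicity of $V(\lambda)$ in $V(\mu)\otimes V(\kappa)^{\otimes\ell}$. The principal obstacle is unquestionably (iv): the independence-of-path statement requires the full strength of Littelmann's deformation/rewriting machinery, whereas every other item reduces either to a crystal-theoretic manipulation or to a routine concatenation argument.
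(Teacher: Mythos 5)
You should know that the paper offers no proof of this theorem at all: it is explicitly presented as a summary of Littelmann's results, with the reader referred to \cite{Lit1}, \cite{Lit2} and \cite{Lit3}. So there is no argument in the paper to compare yours against, and the relevant question is only whether your outline would stand on its own.

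As a roadmap through Littelmann's papers it is sensible, and your grouping of the items by difficulty is accurate. But as a proof it is incomplete precisely at the points you flag as hard, and at one point you do not flag. For (i), knowing that $\tilde{e}_{i}(\eta_{\lambda})={\mathbf{0}}$ for all $i$ does not by itself show that every vertex of the connected component is reachable from $\eta_{\lambda}$ by lowering operators alone; that is the normality/connectedness statement of Littelmann's theory and needs its own argument (it is essentially equivalent to the uniqueness of the highest weight path in each component, which you later use for (ix)--(x)). For (iii), (iv) and (v) you correctly identify what must be proved --- stability of ${\mathcal{L}}_{\min{\mathbb{Z}}}$ under the root operators, the independence theorem, and the character formula --- but you only name the strategies (tracking $m_{\eta}(i)$, the deformation argument, comparison with the Weyl character formula) without carrying them out; each of these is a substantial theorem occupying a significant part of \cite{Lit2}. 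Similarly, for (vii) the existence of the ${\mathsf{W}}$-action requires verifying the braid relations for the $s_{i}$, which is Kashiwara's theorem on normal crystals and not a routine check. Given that the paper itself treats all ten items as citations, the honest assessment is that your proposal is a correct-in-spirit sketch whose hard kernels are deferred to the very literature the paper cites, rather than an independent proof.
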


\begin{remarks}
\begin{enumerate}

\item[(i)] Combining (\ref{etil}) with assertions {(i) and (v)} of Theorem
\ref{Th_Littel}, one may check that the function $e^{-\lambda}s_{\lambda}$ is
in fact a polynomial in the variables $T_{i}=e^{-\alpha_{i}}$, namely
\begin{equation}
s_{\lambda}=e^{\lambda}S_{\lambda}(T_{1},\ldots,T_{n}) \label{defS}%
\end{equation}
where $S_{\lambda}\in{\mathbb{C}}[X_{1},\ldots,X_{n}]$.

\item[(ii)] Using {assertion (i) } of Theorem \ref{Th_Littel}, we obtain
$m_{\mu,\kappa}^{\lambda}\neq0$ only if $\mu+\kappa-\lambda\in Q_{+}$.
Similarly, when $f_{\lambda/\mu}^{\kappa,\ell}\neq0$ one necessarily has
$\mu+\ell\kappa-\lambda\in Q_{+}$.
\end{enumerate}
\end{remarks}

\section{Random paths from Littelmann paths}

\label{Sec_WP}In this Section we recall some results of \cite{LLP3}.\ We also
introduce the notion of central probability distribution on elementary
Littelmann paths and show these distributions coincide with those used in the
seminal works \cite{BBO}, \cite{OC1} and also in our previous papers
\cite{LLP}, \cite{LLP2},\cite{LLP3}.

\subsection{Central probability measure on trajectories}

Consider $\kappa\in P_{+}$ and a path $\pi_{\kappa}\in{\mathcal{L}}$ from $0$
to $\kappa$ such that $\operatorname{Im}\pi_{\kappa}$ is contained in
${\mathcal{C}}$. Let $B(\pi_{\kappa})$ be the connected component of
${\mathcal{L}}$ containing $\pi_{\kappa}$.\ Assume that $\{\pi_{1},\ldots
,\pi_{\ell}\}$ is a family of elementary paths in $B(\pi_{\kappa})$; the path
$\pi_{1}\otimes\cdots\otimes\pi_{\ell}$ of length $\ell$ is defined by: for
all $k\in\{1,\ldots,\ell-1\}$ and $t\in\lbrack k,k+1]$
\begin{equation}
\pi_{1}\otimes\cdots\otimes\pi_{\ell}(t)=\pi_{1}(1)+\cdots+\pi_{k}%
(1)+\pi_{k+1}(t-k). \label{tens_path}%
\end{equation}
Let $B(\pi_{\kappa})^{\otimes\ell}$ be the set of paths of the form $b=\pi
_{1}\otimes\cdots\otimes\pi_{\ell}$ where $\pi_{1},\ldots,\pi_{\ell}$ are
elementary paths in $B(\pi_{\kappa})$; there exists a bijection $\Delta$
between $B(\pi_{\kappa})^{\otimes\ell}$ and the set $B^{\ast\ell}(\pi_{\kappa
})$ of paths in ${\mathcal{L}}$ obtained by concatenations of $\ell$ paths of
$B(\pi_{\kappa})$:
\begin{equation}
\Delta:\left\{
\begin{array}
[c]{clc}%
B(\pi_{\kappa})^{\otimes\ell} & \longrightarrow & B(\pi_{\kappa})^{\ast\ell}\\
\pi_{1}\otimes\cdots\otimes\pi_{\ell} & \longmapsto & \pi_{1}\ast\cdots\ast
\pi_{\ell}%
\end{array}
\right.  . \label{DefDelta}%
\end{equation}
In fact $\pi_{1}\otimes\cdots\otimes\pi_{\ell}$ and $\pi_{1}\ast\cdots\ast
\pi_{\ell}$ coincide up to a reparametrization and we define the weight of
$b=\pi_{1}\otimes\cdots\otimes\pi_{\ell}$ setting
\[
{\mathrm{wt}}(b):={\mathrm{wt}}(\pi_{1})+\cdots+{\mathrm{wt}}(\pi_{\ell}%
)=\pi_{1}(1)+\cdots+\pi_{\ell}(1).
\]
The involution $r$ on $\eta\in B(\pi_{\kappa})^{\otimes\ell}$ is such that
\[
r(\eta)(t)=\eta(\ell-t)-\eta(0)
\]
for any $t\in\lbrack0,\ell]$.

Consider $p$ a probability distribution on $B(\pi_{\kappa})$ such that
$p_{\pi}>0$ for any $\pi\in B(\pi_{\kappa})$. For any integer $\ell\geq1$, we
endow $B(\pi_{\kappa})^{\otimes\ell}$ with the product density $p^{\otimes
\ell}$. That is we set $p_{\pi}^{\otimes\ell}=p_{\pi_{1}}\times\cdots\times
p_{\pi_{\ell}}$ for any $\pi=\pi_{1}\otimes\cdots\otimes\pi_{\ell}\in
B(\pi_{\kappa})^{\otimes\ell}$.\ Here, we follow the classical construction of
a Bernoulli process.\ Write $\Pi_{\ell}:B(\pi_{\kappa})^{\otimes\ell
}\rightarrow B(\pi_{\kappa})^{\otimes\ell-1}$ the projection defined by
$\Pi_{\ell}(\pi_{1}\otimes\cdots\otimes\pi_{\ell-1}\otimes\pi_{\ell})=\pi
_{1}\otimes\cdots\otimes\pi_{\ell-1}$; the sequence $(B(\pi_{\kappa}%
)^{\otimes\ell},\Pi_{\ell},p^{\otimes\ell})_{\ell\geq1}$ is a projective
system of probability spaces. We denote by $\Omega=(B(\pi_{\kappa}%
)^{\otimes\mathbb{Z}_{\geq0}},p^{\otimes{\mathbb{Z}_{\geq0}}})$ its projective
limit. The elements of $B(\pi_{\kappa})^{\otimes\mathbb{Z}_{\geq0}}$ are
infinite sequences $\omega=(\pi_{\ell})_{\ell\geq1}$ we call trajectories. By
a slight abuse of notation, we will write $\Pi_{\ell}(\omega)=\pi_{1}%
\otimes\cdots\otimes\pi_{\ell}$. We also write $\mathbb{P}=p^{\otimes
{\mathbb{Z}_{\geq0}}}$ for short. For any $b\in B(\pi_{\kappa})^{\otimes\ell}%
$, we denote by $U_{b}=\{\omega\in\Omega\mid\Pi_{\ell}(\omega)=b\}$ the
cylinder defined by $\pi$ in $\Omega$.

\begin{definition}
\label{Def_Central}The probability distribution $\mathbb{P}=p^{\otimes
{\mathbb{Z}_{\geq0}}}$ is \emph{central} on $\Omega$ when for any $\ell\geq1$
and any vertices $b$ and $b^{\prime}$ in $B(\pi_{\kappa})^{\otimes\ell}$ such
that $\mathrm{wt}(b)=\mathrm{wt}(b^{\prime})$ we have $\mathbb{P}%
(U_{b})=\mathbb{P}(U_{b^{\prime}})$.
\end{definition}

\begin{remark}
The probability distribution $\mathbb{P}$ is \emph{central} when for any
integer $\ell\geq1$ and any vertices $b,b^{\prime}$ in $B(\pi_{\kappa
})^{\otimes\ell}$ such that $\mathrm{wt}(b)=\mathrm{wt}(b^{\prime})$, we have
$p_{b}^{\otimes\ell}=p_{b^{\prime}}^{\otimes\ell}$. We indeed have
$U_{b}=b\otimes\Omega$ and $U_{b^{\prime}}=b\otimes\Omega.$ Hence
$\mathbb{P}(U_{b})=p_{b}^{\otimes\ell}$ and $\mathbb{P}(U_{b^{\prime}%
})=p_{b^{\prime}}^{\otimes\ell}$.
\end{remark}

\bigskip

The following proposition shows that $\mathbb{P}$ can only be central when the
probability distribution $p$ on $B(\pi_{\kappa})$ is compatible with the
graduation of $B(\pi_{\kappa})$ by the set of simple roots. This justifies the
restriction we did in \cite{LLP} and \cite{LLP3} on the probability
distributions we have considered on $B(\pi_{\kappa})$. This restriction will
also be relevant in the remaining of this paper.

\begin{proposition}
The following assertions are equivalent

\begin{enumerate}
\item[(i)] The probability distribution $\mathbb{P}$ is central.

\item[(ii)] There exists an $n$-tuple $\tau=(\tau_{1},\ldots,\tau_{n}%
)\in]0,+\infty\lbrack^{n}$ such that for each arrow $\pi\overset
{i}{\rightarrow}\pi^{\prime}$ in $B(\pi_{\kappa})$, we have the relation
$p_{\pi^{\prime}}=p_{\pi}\times\tau_{i}.$
\end{enumerate}
\end{proposition}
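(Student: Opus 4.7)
My plan is to treat the two implications separately, with the direction (i)$\Rightarrow$(ii) carrying all the genuine content.

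For the easy direction (ii)$\Rightarrow$(i), I would iterate the hypothesized relation $p_{\pi'}=p_\pi\tau_i$ along crystal arrows. By Theorem~\ref{Th_Littel}(i), every $\pi\in B(\pi_\kappa)$ is of the form $\tilde f_{i_k}\cdots\tilde f_{i_1}\pi_\kappa$, and by (\ref{etil}) the weight satisfies $\mathrm{wt}(\pi)=\kappa-\alpha_{i_1}-\cdots-\alpha_{i_k}$. Hence (ii) gives
\[
p_\pi \;=\; p_{\pi_\kappa}\prod_{j=1}^{k}\tau_{i_j} \;=\; p_{\pi_\kappa}\prod_{i=1}^{n}\tau_i^{c_i(\pi)},
\]
where the integers $c_i(\pi)\geq 0$ are uniquely determined by the decomposition $\kappa-\mathrm{wt}(\pi)=\sum_i c_i(\pi)\alpha_i$ (the simple roots being linearly independent). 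In particular $p_\pi$ depends only on $\mathrm{wt}(\pi)$. For $b=\pi_1\otimes\cdots\otimes\pi_\ell$ the product $p_b^{\otimes\ell}=\prod_j p_{\pi_j}$ then reduces to $p_{\pi_\kappa}^\ell\prod_i\tau_i^{d_i(b)}$ with $\sum_i d_i(b)\alpha_i=\ell\kappa-\mathrm{wt}(b)$, so $p_b^{\otimes\ell}$ depends only on $\mathrm{wt}(b)$, which is centrality.

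The substantial direction is (i)$\Rightarrow$(ii). Fix a simple root index $i$; if $B(\pi_\kappa)$ contains no $i$-arrow, the requirement is vacuous and I set $\tau_i=1$. Otherwise I will define $\tau_i$ as the common value of $p_{\pi'}/p_\pi$ over every arrow $\pi\overset{i}{\rightarrow}\pi'$ of $B(\pi_\kappa)$ and must check this ratio does not depend on the chosen arrow. The key trick is to take two arrows $\pi_1\overset{i}{\rightarrow}\pi_1'$ and $\pi_2\overset{i}{\rightarrow}\pi_2'$ and to compare the two elements $\pi_1\otimes\pi_2'$ and $\pi_1'\otimes\pi_2$ of $B(\pi_\kappa)^{\otimes 2}$: both have weight $\mathrm{wt}(\pi_1)+\mathrm{wt}(\pi_2)-\alpha_i$, so centrality at length $\ell=2$, combined with the remark identifying $\mathbb{P}(U_b)$ with $p_b^{\otimes 2}$, yields $p_{\pi_1}p_{\pi_2'}=p_{\pi_1'}p_{\pi_2}$, i.e.\ $p_{\pi_1'}/p_{\pi_1}=p_{\pi_2'}/p_{\pi_2}$. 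Hence $\tau_i$ is well defined, and since each $p_\pi>0$ it belongs to $]0,+\infty[$.

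There is no real obstacle in either direction; the only conceptual point is recognizing that the minimal test for centrality, namely equality of probabilities across the swap $(\pi_1'\otimes\pi_2)\leftrightarrow(\pi_1\otimes\pi_2')$ in $B(\pi_\kappa)^{\otimes 2}$, is already strong enough to force the one-parameter description (ii). The remainder is bookkeeping using the linear independence of the simple roots.
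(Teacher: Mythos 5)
Your proposal is correct and uses essentially the same argument as the paper: the key step in both is the swap $\pi_1\otimes\pi_2'\leftrightarrow\pi_1'\otimes\pi_2$ in $B(\pi_\kappa)^{\otimes 2}$, whose two members share the weight $\mathrm{wt}(\pi_1)+\mathrm{wt}(\pi_2)-\alpha_i$, forcing $p_{\pi_1'}/p_{\pi_1}=p_{\pi_2'}/p_{\pi_2}$, and the converse is the same bookkeeping via $p_b=p_{\pi_\kappa}^{\ell}\tau^{\ell\kappa-\mathrm{wt}(b)}$. Your only (harmless, indeed slightly cleaner) deviation is that you apply the swap directly to an arbitrary pair of $i$-arrows, whereas the paper wraps the identical comparison in an induction on the depth $d(\pi)$.
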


\begin{proof}
Assume probability distribution $\mathbb{P}$ is central. For any path $\pi\in
B(\pi_{\kappa})$, we define the depth $d(\pi)$ as the number of simple roots
appearing in the decomposition of $\kappa-\mathrm{wt}(\pi)$ on the basis of
simple roots (see {assertion (i)} of Theorem \ref{Th_Littel}). This is also
the length of any path joining $\pi_{\kappa}$ to $\pi$ in the crystal graph
$B(\pi_{\kappa})$. We have to prove that $\frac{p_{\pi^{\prime}}}{p_{\pi}}$ is
a constant depending only on $i$ as soon as we have an arrow $\pi\overset
{i}{\rightarrow}\pi^{\prime}$ in $B(\pi_{\kappa})$. For any $k\geq1$, we set
$B(\pi_{\kappa})_{k}=\{\pi\in B(\pi_{\kappa})\mid d(\pi)\leq k\}$. We will
proceed by induction and prove that $\frac{p_{\pi^{\prime}}}{p_{\pi}}$ is a
constant depending only on $i$ as soon as there is an arrow $\pi\overset
{i}{\rightarrow}\pi^{\prime}$ in $B(\pi_{\kappa})_{k}$. This is clearly true
in $B(\pi_{\kappa})_{1}$ since there is at most one arrow $i$ starting from
$\pi_{\kappa}$. Assume, the property is true in $B(\pi_{\kappa})_{k}$ with
$k\geq1$. Consider $\pi^{\prime}$ in $B(\pi_{\kappa})_{k+1}$ and an arrow
$\pi\overset{i}{\rightarrow}\pi^{\prime}$ in $B(\pi_{\kappa})_{k+1}$. We must
have $\pi\in B(\pi_{\kappa})_{k}$. If $B(\pi_{\kappa})_{k}$ does not contains
any arrow $\overset{i}{\rightarrow}$, there is nothing to verify. So assume
there is at least an arrow $\pi_{1}\overset{i}{\rightarrow}\pi_{2}$ in
$B(\pi_{\kappa})_{k}$. In $B(\pi_{\kappa})^{\otimes2}$, we have $\mathrm{wt}%
(\pi_{1}\otimes\pi^{\prime})=\mathrm{wt}(\pi_{1})+\mathrm{wt}(\pi)-\alpha_{i}$
since $\mathrm{wt}(\pi^{\prime})=\mathrm{wt}(\pi)-\alpha_{i}$. Similarly, we
have $\mathrm{wt}(\pi_{2}\otimes\pi)=\mathrm{wt}(\pi_{1})-\alpha
_{i}+\mathrm{wt}(\pi)$ since $\mathrm{wt}(\pi_{2})=\mathrm{wt}(\pi_{1}%
)-\alpha_{i}$. Thus $\mathrm{wt}(\pi_{1}\otimes\pi^{\prime})=\mathrm{wt}%
(\pi_{2}\otimes\pi)$. Since $\mathbb{P}$ is central, we deduce from the above
remark the equality $p^{\otimes2}(\pi_{1}\otimes\pi^{\prime})=p^{\otimes2}%
(\pi_{2}\otimes\pi)$. This yields $p_{\pi_{1}}p_{\pi^{\prime}}=p_{\pi_{2}%
}p_{\pi}$. Hence $\frac{p_{\pi^{\prime}}}{p_{\pi}}=\frac{p_{\pi_{2}}}%
{p_{\pi_{1}}}$. So by our induction hypothesis, $\frac{p_{\pi^{\prime}}%
}{p_{\pi}}$ is equal to a constant which only depends on $i$.

Conversely, assume there exists an $n$-tuple $\tau=(\tau_{1},\ldots,\tau
_{n})\in]0,+\infty\lbrack^{n}$ such that for each arrow $\pi\overset
{i}{\rightarrow}\pi^{\prime}$ in $B(\pi_{\kappa})$, we have the relation
$p_{\pi^{\prime}}=p_{\pi}\times\tau_{i}.$ Consider vertices $b,b^{\prime}$ in
$B(\pi_{\kappa})^{\otimes\ell}$ such that $\mathrm{wt}(b)=\mathrm{wt}%
(b^{\prime})$. Since $b$ and $b^{\prime}$ have the same weight, we derive from
(\ref{etil}) that the paths from $\pi_{\kappa}$ to $b$ and the paths from
$\pi_{\kappa}$ to $b^{\prime}$ contain the same number (says $a_{i}$) of
arrows $\overset{i}{\rightarrow}$ for any $i=1,\ldots,n$. We therefore have
$p_{b}=p_{b^{\prime}}=p_{\pi_{\kappa}}\tau_{1}^{a_{1}}\cdots\tau_{n}^{a_{n}}$
and the probability distribution $\mathbb{P}$ is central.
\end{proof}

\subsection{Central probability distributions on elementary paths}

\label{Probability distribution on elementary paths}

In the remaining of the paper, we fix the $n$-tuple $\tau=(\tau_{1}%
,\ldots,\tau_{n})\in]0,+\infty\lbrack^{n}$ and assume that $\mathbb{P}$ is a
central distribution on $\Omega$ defined from $\tau$ (in the sense of
Definition \ref{Def_Central}.\ For any $u=u_{1}\alpha_{1}+\cdots+u_{n}%
\alpha_{n}\in Q$, we set $\tau^{u}=\tau_{1}^{u_{1}}\cdots\tau_{n}^{u_{n}}$.
Since the root and weight lattices have both rank $n$, any weight $\beta\in P$
also decomposes on the form $\beta=\beta_{1}\alpha_{1}+\cdots+\beta_{n}%
\alpha_{n}$ with possibly non integral coordinates $\beta_{i}$. The transition
matrix between the bases $\{\omega_{i},i=1,\ldots,n\}$ and $\{\alpha
_{i},i=1,\ldots,n\}$ (regarded as bases of $P_{\mathbb{R}}$) being the Cartan
matrix of $\mathfrak{g}$ whose entries are integers, the coordinates
$\beta_{i}$ are rational. We will also set $\tau^{\beta}=\tau_{1}^{\beta_{1}%
}\cdots\tau_{n}^{\beta_{n}}$.

Let $\pi\in B(\pi_{\kappa})$: by {assertion (i)} of Theorem \ref{Th_Littel},
one gets
\[
\pi(1)={\mathrm{wt}}(\pi)=\kappa-\sum_{i=1}^{n}u_{i}(\pi)\alpha_{i}%
\]
where $u_{i}(\pi)\in${$\mathbb{Z}_{\geq0}$} for any $i=1,\ldots,n$. We define
$S_{\kappa}(\tau):=S_{\kappa}(\tau_{1},\ldots,\tau_{n})=\sum_{\pi\in
B(\pi_{\kappa})}\tau^{\kappa-{\mathrm{wt}}(\pi)}.$

\begin{definition}
We define the probability distribution $p=(p_{\pi})_{\pi\in B(\pi_{\kappa})}$
on $B(\pi_{\kappa})$ associated to $\tau$ by setting $\displaystyle p_{\pi
}=\frac{\tau^{\kappa-{\mathrm{wt}}(\pi)}}{S_{\kappa}(\tau)}.$
\end{definition}

\begin{remark}
By {assertion (iii)} of Theorem \ref{Th_Littel}, for $\pi_{\kappa}^{\prime}$
another elementary path from $0$ to $\kappa$ such that $\operatorname{Im}%
\pi_{\kappa}^{\prime}$ is contained in ${\mathcal{C}}$, there exists an
isomorphism $\Theta$ between the crystals $B(\pi_{\kappa})$ and $B(\pi
_{\kappa}^{\prime})$.\ For $p^{\prime}$ the central probability distribution
defined from $\tau$ on $B(\pi_{\kappa}^{\prime})$, one gets $p_{\pi}%
=p_{\Theta(\pi)}^{\prime}$ for any $\pi\in B(\pi_{\kappa})$. Therefore, the
probability distributions we use on the graph $B(\pi_{\kappa})$ are invariant
by crystal isomorphisms and also the probabilistic results we will establish
in the paper.
\end{remark}

\bigskip

The following proposition gathers results of \cite{LLP} (Lemma 7.2.1) and
\cite{LLP3} (Proposition 5.4) . Recall that $m=\sum_{\pi\in B(\pi_{\kappa}%
)}p_{\pi}\pi$. We set $\overline{m}=m(1)$.

\begin{proposition}
\ \label{Prop_Length}

\begin{enumerate}
\item[(i)] We have $\overline{m}\in\mathring{\mathcal{C}}$ if and only if
$\tau_{i}\in]0,1[$ for any $i=1,\ldots,n$.

\item[(ii)] Denote by $L$ the common length of the paths in $B(\pi_{\kappa})$.
Then, the length of $m$ is less or equal to $L$.
\end{enumerate}
\end{proposition}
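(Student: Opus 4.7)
Part (ii) is a direct application of the triangle inequality: since $m(t) = \sum_{\pi \in B(\pi_\kappa)} p_\pi\, \pi(t)$ is a finite convex combination of piecewise-linear paths, one has $\|m'(t)\| \leq \sum_\pi p_\pi\, \|\pi'(t)\|$ almost everywhere. Integrating over $[0,1]$ and using assertion (ii) of Theorem \ref{Th_Littel} (every path in $B(\pi_\kappa)$ has length $L$), one gets $\mathrm{length}(m) \leq \sum_\pi p_\pi L = L$.

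For part (i), I would compute $\langle \overline{m}, \alpha_i^\vee \rangle$ for each simple root $\alpha_i$ by slicing $B(\pi_\kappa)$ along its $i$-chains. By assertion (vi) of Theorem \ref{Th_Littel}, $\langle \mathrm{wt}(\pi), \alpha_i^\vee \rangle = \varphi_i(\pi) - \varepsilon_i(\pi)$, so
\[
\langle \overline{m}, \alpha_i^\vee \rangle = \sum_{\pi \in B(\pi_\kappa)} p_\pi\, (\varphi_i(\pi) - \varepsilon_i(\pi)).
\]
Partition $B(\pi_\kappa)$ into its $i$-chains. On a chain $\mathcal{C}$ of length $k_\mathcal{C}+1$ with highest vertex $\pi_0$, centrality of $p$ yields $p_{\tilde{f}_i^j \pi_0} = p_{\pi_0}\, \tau_i^j$, while $\varphi_i - \varepsilon_i$ takes successively the values $k_\mathcal{C} - 2j$ for $j=0,\ldots,k_\mathcal{C}$. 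The contribution of the chain to $\langle \overline{m}, \alpha_i^\vee \rangle$ is therefore $p_{\pi_0}\, \sigma_{k_\mathcal{C}}(\tau_i)$, where $\sigma_k(\tau) := \sum_{j=0}^k \tau^j (k-2j)$.

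It then remains to determine the sign of $\sigma_k(\tau)$. Grouping terms via the involution $j \leftrightarrow k-j$, one has
\[
2\sigma_k(\tau) = \sum_{j=0}^k (k-2j)(\tau^j - \tau^{k-j}),
\]
a sum in which every non-zero summand carries the sign of $(1-\tau)$; hence $\sigma_k(\tau) > 0$ iff $0 < \tau < 1$ and $k \geq 1$ (and $\sigma_k(\tau) < 0$ iff $\tau > 1$ and $k \geq 1$). Summing over all $i$-chains gives $\langle \overline{m}, \alpha_i^\vee \rangle$ the sign of $1-\tau_i$, with strict positivity as soon as at least one $i$-chain has positive length. Part (i) follows.

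The only real subtlety is to check, for every $i$, the existence of at least one non-trivial $i$-chain in $B(\pi_\kappa)$ (needed for the strict inequality defining $\mathring{\mathcal{C}}$). Since $\mathfrak{g}$ is simple, the Weyl group $\mathsf{W}$ acts irreducibly on $P_\mathbb{R}$, so the $\mathsf{W}$-orbit of the non-zero weight $\kappa$ spans $P_\mathbb{R}$; in particular some $w\kappa$ satisfies $\langle w\kappa, \alpha_i^\vee \rangle \neq 0$, and by assertion (vii) of Theorem \ref{Th_Littel} the corresponding path $w\pi_\kappa \in B(\pi_\kappa)$ carries weight $w\kappa$, hence lies on a non-trivial $i$-chain.
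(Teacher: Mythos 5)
Your proof is correct. Note that the paper itself gives no proof of Proposition \ref{Prop_Length}: it is stated as a recollection of Lemma 7.2.1 of \cite{LLP} and Proposition 5.4 of \cite{LLP3}, so your argument is being judged as a self-contained replacement, and it holds up. Part (ii) is exactly the convexity/triangle-inequality argument one expects. For part (i), the reduction via Theorem \ref{Th_Littel}(vi) to $\sum_\pi p_\pi(\varphi_i(\pi)-\varepsilon_i(\pi))$, the decomposition into $i$-strings on which $p$ is geometric with ratio $\tau_i$, and the pairing $j\leftrightarrow k-j$ showing that $\sum_{j=0}^k\tau^j(k-2j)$ has the strict sign of $1-\tau$ for $k\geq 1$, is the natural mechanism behind the cited result (the computation in \cite{LLP} likewise reduces to the sign of this $\mathfrak{sl}_2$-string sum, there phrased via the logarithmic derivative of $S_\kappa$ and the $s_i$-symmetry of the character). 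You were also right to flag, and correctly dispose of, the only delicate point: the existence for each $i$ of a nontrivial $i$-string, which you get from irreducibility of the reflection representation of ${\mathsf{W}}$ applied to the orbit of $\kappa$ together with Theorem \ref{Th_Littel}(vii). The sole implicit hypothesis is $\kappa\neq 0$ (for $\kappa=0$ the crystal is a point, $\overline{m}=0$, and the equivalence fails vacuously); this degenerate case is tacitly excluded throughout the paper, so no harm done.
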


Set $\mathcal{M}_{\kappa}=\{\overline{m}\mid\tau=(\tau_{1},\ldots,\tau_{n}%
)\in]0,+\infty\lbrack\}$ be the set of all vectors $\overline{m}$ obtained
from the central distributions defined on $B(\pi_{\kappa})$. Observe that
$\mathcal{M}_{\kappa}$ only depends on $\kappa$ and not of the choice of the
highest path $\pi_{\kappa}$. This is the set of possible mean obtained from
central probability distributions defined on $B(\pi_{\kappa})$. We will also
need the set%
\begin{equation}
\mathcal{D}_{\kappa}=\mathcal{M}_{\kappa}\cap\mathring{\mathcal{C}%
}=\{\overline{m}\in\mathcal{M}_{\kappa}\mid\tau_{i}\in]0,1[,i=1,\ldots,n\}
\label{DefDkappa}%
\end{equation}
of drifts in $\mathring{\mathcal{C}}$.

\begin{example}
\label{exam_typ}We resume Example \ref{example-C2} and consider the Lie
algebra $\mathfrak{g=sp}_{4}$ of type $C_{2}$ for which $P=\mathbb{Z}^{2}$ and
$\mathcal{C}=\{(x_{1},x_{2})\in\mathbb{R}^{2}\mid x_{1}\geq x_{2}\geq0\}$.

For $\kappa=\omega_{1}$ and $\pi_{\kappa}$ the line between $0$ and
$\varepsilon_{1}$, we get $B(\pi_{\kappa})=\{\pi_{1},\pi_{2},\pi_{\overline
{2}},\pi_{\overline{1}}\}$ where each $\pi_{a}$ is the line between $0$ and
$\varepsilon_{a}$ (with the convention $\varepsilon_{\overline{2}%
}=-\varepsilon_{2}$ and $\varepsilon_{\overline{1}}=-\varepsilon_{1}$). The
underlying crystal graph is%
\[
\pi_{1}\overset{1}{\rightarrow}\pi_{2}\overset{2}{\rightarrow}\pi
_{\overline{2}}\overset{1}{\rightarrow}\pi_{\overline{1}}\text{.}%
\]
For $(\tau_{1},\tau_{2})\in]0,+\infty\lbrack^{2}$, we obtain the probability
distribution on $B(\pi_{\kappa})$%
\begin{align*}
p_{\pi_{1}}  &  =\frac{1}{1+\tau_{1}+\tau_{1}\tau_{2}+\tau_{1}^{2}\tau_{2}%
},p_{\pi_{2}}=\frac{\tau_{1}}{1+\tau_{1}+\tau_{1}\tau_{2}+\tau_{1}^{2}\tau
_{2}},\\
p_{\pi_{\overline{2}}}  &  =\frac{\tau_{1}\tau_{2}}{1+\tau_{1}+\tau_{1}%
\tau_{2}+\tau_{1}^{2}\tau_{2}}\text{ and }p_{\pi_{\overline{2}}}=\frac
{\tau_{1}^{2}\tau_{2}}{1+\tau_{1}+\tau_{1}\tau_{2}+\tau_{1}^{2}\tau_{2}}.
\end{align*}
So we have
\[
\overline{m}=\frac{1}{1+\tau_{1}+\tau_{1}\tau_{2}+\tau_{1}^{2}\tau_{2}%
}((1-\tau_{1}^{2}\tau_{2})\varepsilon_{1}+(\tau_{1}-\tau_{1}\tau
_{2})\varepsilon_{2}).
\]
When the pair $(\tau_{1},\tau_{2})$ runs over $]0,1[^{2}$, one verifies by a
direct computation that $\mathcal{D}_{\kappa}$ coincide with the interior of
the triangle with vertices $0,\varepsilon_{1},\varepsilon_{2}$.
\end{example}

\begin{remark}
In the previous example, it is easy to show by a direct calculation that the
adherence $\overline{\mathcal{M}}_{\kappa}$ of $\mathcal{M}_{\kappa}$ is the
convex hull of the weight $\{\pm\varepsilon_{1},\pm\varepsilon_{2}\}$ of the
representation $V(\omega_{1})$ considered (i.e. the interior of the square
with vertices $\{\pm\varepsilon_{1},\pm\varepsilon_{2}\}$).\ In general, one
can show that $\overline{\mathcal{M}}_{\kappa}$ is contained in the convex
hull of the weights of $V(\kappa)$. The problem of determining, for any
dominant weight $\kappa$, wether or not both sets coincide seems to us
interesting and not immediate.
\end{remark}

\subsection{Random paths of arbitrary length}

\label{Subsec-RPW}With the previous convention, the product probability
measure $p^{\otimes\ell}$ on $B(\pi_{\kappa})^{\otimes\ell}$ satisfies
\begin{equation}
p^{\otimes\ell}(\pi_{1}\otimes\cdots\otimes\pi_{\ell})=p(\pi_{1})\cdots
p(\pi_{\ell})=\frac{\tau^{\ell\kappa-(\pi_{1}(1)+\cdots+\pi_{\ell}(1))}%
}{S_{\kappa}(\tau)^{\ell}}=\frac{\tau^{\ell\kappa-{\mathrm{wt}}(b)}}%
{S_{\kappa}(\tau)^{\ell}}. \label{potimesell}%
\end{equation}
Let $(X_{\ell})_{\ell\geq1}$ be a sequence of i.i.d. random variables {with
values in $B(\pi_{\kappa})$ and law $p=(p_{\pi})_{\pi\in B(\pi_{\kappa})}$;
for any $\ell\geq1$ we thus gets }
\begin{equation}
\mathbb{P}(X_{\ell}=\pi)=p_{\pi}\text{ for any }\pi\in B(\pi_{\kappa}).
\label{defX}%
\end{equation}
Consider $\mu\in P$. The random path ${\mathcal{W}}$ starting at $\mu$ is
defined from the probability space $\Omega$ with values in $P_{\mathbb{R}}$
by
\[
{\mathcal{W}}(t):=\Pi_{\ell}({\mathcal{W}})(t)=\mu+(X_{1}\otimes\cdots\otimes
X_{\ell-1}\otimes X_{\ell})(t){\text{ for }}t\in\lbrack\ell-1,\ell].
\]
For any integer $\ell\geq1$, we set $W_{\ell}=\mathcal{W}(\ell)$.\ The
sequence $W=(W_{\ell})_{\ell\geq1}$ defines a random walk starting at
$W_{0}=\mu$ whose increments are the weights of the representation $V(\kappa
)$. The following proposition was established in \cite{LLP3} (see Proposition 4.6).

\begin{proposition}
\ \label{Prop_util}

\begin{enumerate}
\item[(i)] For any $\beta,\eta\in P$, one gets
\[
{\mathbb{P}}(W_{\ell+1}=\beta\mid W_{\ell}=\eta)=K_{\kappa,\beta-\eta}%
\frac{\tau^{\kappa+\eta-\beta}}{S_{\kappa}(\tau)}.
\]

\item[(ii)] Consider $\lambda,\mu\in P^{+}$ we have
\[
{\mathbb{P}}(W_{\ell}=\lambda,W_{0}=\mu,{\mathcal{W}}(t)\in{\mathcal{C}%
}\ {\text{ for any }}t\in\lbrack0,\ell])=f_{\lambda/\mu}^{\ell}\frac
{\tau^{\ell\kappa+\mu-\lambda}}{S_{\kappa}(\tau)^{\ell}}.
\]
In particular
\[
{\mathbb{P}}(W_{\ell+1}=\lambda,W_{\ell}=\mu,{\mathcal{W}}(t)\in{\mathcal{C}%
}\ {\text{ for any }}t\in\lbrack\ell,\ell+1])=m_{\mu,\kappa}^{\lambda}%
\frac{\tau^{\kappa+\mu-\lambda}}{S_{\kappa}(\tau)}.
\]

\end{enumerate}
\end{proposition}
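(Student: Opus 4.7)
The plan is to reduce both probabilities to sums over Littelmann paths weighted by the central density $p^{\otimes \ell}$, and then to invoke Theorem \ref{Th_Littel} to rewrite the resulting counts as $K_{\kappa,\beta-\eta}$ and $f_{\lambda/\mu}^{\ell}$.

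For assertion (i), by the very construction of $\mathcal{W}$ and the independence of the $X_k$'s, one has
\begin{equation*}
{\mathbb{P}}(W_{\ell+1}=\beta\mid W_{\ell}=\eta)=\sum_{\pi\in B(\pi_{\kappa}),\ \pi(1)=\beta-\eta}p_{\pi}.
\end{equation*}
Each summand $\pi$ has weight $\beta-\eta$, so by definition $p_{\pi}=\tau^{\kappa+\eta-\beta}/S_{\kappa}(\tau)$, a constant depending only on $\beta-\eta$. The number of $\pi\in B(\pi_{\kappa})$ with $\pi(1)=\beta-\eta$ is exactly the coefficient of $e^{\beta-\eta}$ in the character formula $s_{\kappa}=\sum_{\eta'\in B(\pi_{\kappa})}e^{\eta'(1)}$ of Theorem \ref{Th_Littel}(v), namely $K_{\kappa,\beta-\eta}$. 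Multiplication gives the stated formula.

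For assertion (ii), I would first translate the event into the combinatorial language of Littelmann paths. Fix a path $\eta_{\mu}$ joining $0$ to $\mu$ with $\operatorname{Im}\eta_{\mu}\subset\mathcal{C}$ (which exists since $\mu\in P_{+}$) and, for each realization, set $\pi=\eta_{\mu}\ast X_{1}\ast\cdots\ast X_{\ell}\in B(\eta_{\mu})\ast B(\eta_{\kappa})^{\ast\ell}$. Since $\operatorname{Im}\eta_{\mu}\subset\mathcal{C}$, the event $\{W_{\ell}=\lambda$ and $\mathcal{W}(t)\in\mathcal{C}$ for all $t\in[0,\ell]\}$ is equivalent to $\{\operatorname{Im}\pi\subset\mathcal{C}$ and $\pi(1)=\lambda\}$, which by Proposition \ref{Prop_HP}(iii) is equivalent to $\pi\in H_{\lambda/\mu}^{\ell}$. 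Theorem \ref{Th_Littel}(x) then asserts $|H_{\lambda/\mu}^{\ell}|=f_{\lambda/\mu}^{\ell}$ and forces the first factor of $\pi$ to be $\eta_{\mu}$ itself. Each such realization $(X_{1},\ldots,X_{\ell})$ has total increment $X_{1}(1)+\cdots+X_{\ell}(1)=\lambda-\mu$, so by (\ref{potimesell}) its probability equals $\tau^{\ell\kappa+\mu-\lambda}/S_{\kappa}(\tau)^{\ell}$, independent of the particular tuple. Summing over the $f_{\lambda/\mu}^{\ell}$ admissible tuples yields the first equality. The ``in particular'' clause then follows by applying the identity with $\ell=1$ after a time shift via the Markov property and time homogeneity of $\mathcal{W}$, together with the identification $m_{\mu,\kappa}^{\lambda}=f_{\lambda/\mu,\kappa}^{1}$ recorded after (\ref{step}).

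No serious obstacle arises: the proof is essentially a dictionary between the random-walk language and the crystal language. The one delicate point to handle carefully is the translation of the geometric condition ``$\mathcal{W}$ remains in $\mathcal{C}$'' into the crystal condition ``$\tilde{e}_{i}(\pi)=\mathbf{0}$ for all $i$'' via Proposition \ref{Prop_HP}(iii); once this is granted, prepending $\eta_{\mu}$ and applying Theorem \ref{Th_Littel}(x) match the probability with Littelmann's multiplicity formula, while the centrality of $p$ ensures that the probability of any admissible tuple depends only on the endpoint data $(\mu,\lambda)$.
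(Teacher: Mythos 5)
Your argument is correct. Note that the paper itself gives no proof of this proposition --- it is quoted from \cite{LLP3} (Proposition 4.6) --- so there is nothing to compare against line by line; but your reduction is exactly the expected one: (i) is the independence of $X_{\ell+1}$ from $W_{\ell}$ combined with the fact that $p_{\pi}$ depends only on $\mathrm{wt}(\pi)$ and with the character identity of Theorem \ref{Th_Littel}(v), and (ii) is the bijection between admissible tuples $(X_{1},\ldots,X_{\ell})$ and the set $H_{\lambda/\mu}^{\ell}$ of Theorem \ref{Th_Littel}(x), the dictionary between ``$\operatorname{Im}\pi\subset\mathcal{C}$'' and ``$\tilde e_{i}(\pi)=\mathbf{0}$ for all $i$'' being licensed by Proposition \ref{Prop_HP}(iii) because $B(\eta_{\mu})\ast B(\eta_{\kappa})^{\ast\ell}\subset\mathcal{L}_{\min\mathbb{Z}}$ (Theorem \ref{Th_Littel}(viii)). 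The only point worth making explicit is your reading of the ``in particular'' clause: taken literally as a joint probability it would carry an extra factor $\mathbb{P}(W_{\ell}=\mu)$, so it must be understood conditionally on $W_{\ell}=\mu$ (equivalently, as the $\ell=1$ case started at $\mu$), which is precisely how you treat it via time homogeneity and $m_{\mu,\kappa}^{\lambda}=f_{\lambda/\mu,\kappa}^{1}$.
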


\subsection{The generalized Pitman transform}

\label{subsec_Pitman}By {assertion (viii)} of Theorem \ref{Th_Littel}, we know
that $B(\pi_{\kappa})^{\otimes\ell}$ is contained in ${\mathcal{L}}%
_{\min{\mathbb{Z}}}$.\ Therefore, if we consider a path $\eta\in B(\pi
_{\kappa})^{\otimes\ell}$, its connected component $B(\eta)$ is contained in
${\mathcal{L}}_{\min{\mathbb{Z}}}$.\ Now, if $\eta^{h}\in B(b)$ is such that
$\tilde{e}_{i}(\eta^{h})=0$ for any $i=1,\ldots,n$, we should have
$\operatorname{Im}\eta^{h}\subset{\mathcal{C}}$ by assertion (iii) of
Proposition \ref{Prop_HP}. {Assertion (iii)} of Theorem \ref{Th_Littel} thus
implies that $\eta^{h}$ is the unique highest weight path in $B(\eta
)=B(\eta^{h})$.\ Similarly, there is a unique lowest path $\eta_{l}$ in
$B(\eta)$ such that $\tilde{f}_{i}(\eta_{l})=0$ for any $i=1,\ldots,n$.\ This
permits to define the \emph{generalized Pitman} transform on $B(\pi_{\kappa
})^{\otimes\ell}$ as the map {$\mathcal{P}$} which associates to any $\eta\in
B(\pi_{\kappa})^{\otimes\ell}$ the unique path {$\mathcal{P}$}$(\eta)\in
B(\eta)$ such that $\tilde{e}_{i}({\mathcal{P}}(\eta))=0$ for any
$i=1,\ldots,n$.\ By definition, we have $\operatorname{Im}${$\mathcal{P}$%
}$(\eta)\subset{\mathcal{C}}$ and {$\mathcal{P}$}$(\eta)(\ell)\in P_{+}$. One
can also define a dual Pitman transform $\mathcal{E}$ which associates to any
$\eta\in B(\pi_{\kappa})^{\otimes\ell}$ the unique path {$\mathcal{E}$}%
$(\eta)\in B(\eta)$ such that $\tilde{f}_{i}({\mathcal{E}}(\eta))=0$ for any
$i=1,\ldots,n$.\ By (\ref{fer}), we have in fact
\[
\mathcal{E}=r\mathcal{P}r
\]
As observed in \cite{BBO} the path transformation {$\mathcal{P}$} can be made
more explicit (recall we have assumed that $\mathfrak{g}$ is
finite-dimensional). Consider a simple reflection $\alpha$. The Pitman
transformation {$\mathcal{P}$}$_{\alpha}:B(\pi_{\kappa})^{\otimes\ell
}\rightarrow B(\pi_{\kappa})^{\otimes\ell}$ associated to $\alpha$ is defined
by
\begin{equation}
{\mathcal{P}}_{\alpha}(\eta)(t)=\eta(t)-2\inf_{s\in\lbrack0,t]}\langle
\eta(s),\frac{\alpha}{\left\Vert \alpha\right\Vert ^{2}}\rangle\alpha
=\eta(t)-\inf_{s\in\lbrack0,t]}\langle\eta(s),\alpha^{\vee}\rangle
\alpha\label{defPalpha}%
\end{equation}
for any $\eta\in B(\pi_{\kappa})^{\otimes\ell}$ and any $t\in\lbrack0,\ell]$.
Also define the dual transform $\mathcal{E}_{\alpha}:=r\mathcal{P}_{\alpha}r$
on $B(\pi_{\kappa})^{\otimes\ell}$.\ One verifies easily that we have in fact%
\begin{equation}
{\mathcal{E}}_{\alpha}(\eta)(t)=\eta(t)-\inf_{s\in\lbrack t,\ell]}\langle
\eta(s),\alpha^{\vee}\rangle\alpha+\inf_{s\in\lbrack0,\ell]}\langle
\eta(s),\alpha^{\vee}\rangle\alpha. \label{def_Ealpha}%
\end{equation}
Let $w_{0}$ be the maximal length element of ${\mathsf{W}}$ and fix a
decomposition $w_{0}=s_{i_{1}}\cdots s_{i_{r}}$ of $w_{0}$ as a product a
reflections associated to simple roots.

\begin{proposition}
[\cite{BBO}]\label{Prop-BBO}For any path $\eta\in B(\pi_{\kappa})^{\otimes
\ell}$, we have
\begin{equation}
{\mathcal{P}}(\eta)={\mathcal{P}}_{\alpha_{i_{1}}}\cdots{\mathcal{P}}%
_{\alpha_{i_{r}}}(\eta)\text{ and }{\mathcal{E}}(\eta)={\mathcal{E}}%
_{\alpha_{i_{1}}}\cdots{\mathcal{E}}_{\alpha_{i_{r}}}(\eta).\label{def_P}%
\end{equation}
Moreover, $\mathcal{P}$ and $\mathcal{E}$ do not depend on the decomposition
of $w_{0}$ chosen.
\end{proposition}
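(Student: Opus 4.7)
The proof follows the strategy of Biane, Bougerol and O'Connell; I would organise it in two main steps, followed by a short argument for the dual statement.

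\emph{Step 1: identification of $\mathcal{P}_{\alpha_i}$ with iterated Kashiwara raising.} For any $\eta\in B(\pi_{\kappa})^{\otimes\ell}$ (which lies in ${\mathcal{L}}_{\min{\mathbb{Z}}}$ by Theorem \ref{Th_Littel}(viii)), I would first prove
\[
\mathcal{P}_{\alpha_i}(\eta)=\tilde{e}_i^{\varepsilon_i(\eta)}(\eta),
\]
so that $\mathcal{P}_{\alpha_i}$ is exactly the map sending $\eta$ to the top of its $i$-string in $B(\eta)$. To do this one sets $f_i(t):=\langle\eta(t),\alpha_i^{\vee}\rangle$ and compares the Littelmann recipe for $\tilde{e}_i$ (reflection of $\eta$ on the last interval where $f_i$ decreases from $\min f_i+1$ to $\min f_i$) with formula (\ref{defPalpha}). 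Iterating $\tilde{e}_i$ flips one descending excursion of $f_i$ at a time; a direct check shows that after $\varepsilon_i(\eta)$ applications the resulting path satisfies $\langle\tilde{e}_i^{\varepsilon_i(\eta)}(\eta)(t),\alpha_i^{\vee}\rangle=f_i(t)-2\inf_{s\in[0,t]}f_i(s)$, which matches (\ref{defPalpha}) pointwise.

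\emph{Step 2: reduction to crystal braid relations.} Combining Step 1 applied to each factor, the first identity in (\ref{def_P}) reduces to
\[
\tilde{e}_{i_1}^{\varepsilon_{i_1}}\cdots\tilde{e}_{i_r}^{\varepsilon_{i_r}}(\eta)=\eta^{h},
\]
where $\eta^{h}$ denotes the unique highest-weight path in $B(\eta)$ (existence and uniqueness follow from Theorem \ref{Th_Littel}(i) together with Proposition \ref{Prop_HP}(iii)). I would prove this by first establishing the braid relations for the maximal-raising operators $\eta\mapsto\tilde{e}_i^{\varepsilon_i(\eta)}(\eta)$, namely commutation when $s_is_j=s_js_i$ together with the length-three, four or six braid identities in the rank-$2$ subsystems of types $A_2$, $B_2$ and $G_2$. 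Matsumoto's theorem then guarantees that the composition above depends only on $w_0$, not on the chosen reduced expression. Finally, using (\ref{R+w0}) and Proposition \ref{Prop_HP}(iii), a descending induction on the reduced word shows that the resulting path is annihilated by every $\tilde{e}_j$, $j=1,\ldots,n$, i.e.\ has image contained in $\mathcal{C}$, and hence coincides with $\eta^{h}=\mathcal{P}(\eta)$ by uniqueness.

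\emph{Dual statement and main obstacle.} Using (\ref{fer}) together with the definitions (\ref{defPalpha}) and (\ref{def_Ealpha}), one directly checks that $\mathcal{E}=r\mathcal{P}r$ and $\mathcal{E}_{\alpha_i}=r\mathcal{P}_{\alpha_i}r$; applying $r$ to both sides of the first identity in (\ref{def_P}) then yields the identity for $\mathcal{E}$. The independence with respect to the reduced decomposition of $w_0$ comes at no extra cost, because both $\mathcal{P}(\eta)$ and $\mathcal{E}(\eta)$ admit intrinsic characterisations inside $B(\eta)$ as the unique elements killed by all $\tilde{e}_j$ (respectively by all $\tilde{f}_j$). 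The real obstacle lies in Step 2, specifically in verifying the braid relations for the one-dimensional Pitman transforms $\mathcal{P}_{\alpha_i}$ at the rank-$2$ level; this is the technical core of \cite{BBO}, carried out by a case-by-case analysis on the rank-$2$ crystals of types $A_1\times A_1$, $A_2$, $B_2$ and $G_2$.
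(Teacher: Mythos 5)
Your outline is correct and is essentially the argument of Biane--Bougerol--O'Connell: the paper itself gives no proof of this proposition (it is imported verbatim from \cite{BBO}), and your two steps --- identifying $\mathcal{P}_{\alpha_i}$ with the maximal raising operator $\tilde{e}_i^{\varepsilon_i}$ on integral paths, then invoking the braid relations plus Matsumoto's theorem and the characterization of the highest weight path as the unique element of $B(\eta)$ killed by all $\tilde{e}_j$ --- reproduce the structure of their proof, with the rank-$2$ verification of the braid relations correctly singled out as the technical core. The only point worth making explicit in the $r$-conjugation step is that the first identity in (\ref{def_P}) must be known for $r(\eta)$, which in general lies outside $B(\pi_{\kappa})^{\otimes\ell}$; this is harmless because your Steps 1--2 use only integrality of the path (membership in ${\mathcal{L}}_{\min{\mathbb{Z}}}$), which $r$ preserves.
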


\begin{remarks}
\ 

\begin{enumerate}
\item Since {$\mathcal{P}$}$(\eta)$ corresponds to the highest weight vertex
of the crystal $B(\eta)$, we have {$\mathcal{P}$}$^{2}(\eta)=${$\mathcal{P}$%
}$(\eta)$.

\item One easily verifies that each transformation {$\mathcal{P}$}$_{\alpha}$
is continuous for the topology of uniform convergence on the space of
continuous maps from $[0,\ell]$ to $\mathbb{R}$. Hence {$\mathcal{P}$} is also
continuous for this topology.

\item Assume $\eta\in B(\eta_{\lambda})\subset B(\pi_{\kappa})^{\otimes\ell}$
where $\eta_{\lambda}$ is the highest weight path of $B(\eta_{\lambda})$. Then
$\eta^{\lambda}=w_{0}(\eta_{\lambda})$ (the action of ${\mathsf{W}}$ is that
of Theorem \ref{Th_Littel}) is the lowest weight path in $B(\eta_{\lambda}%
)$.\ In this particular case, one can show that we have in fact
\begin{equation}
\mathcal{P}_{i_{a+1}}\cdots\mathcal{P}_{i_{r}}(\eta^{\lambda})=s_{i_{a+1}%
}\cdots s_{i_{r}}(\eta^{\lambda})\text{ and }\mathcal{E}_{i_{a+1}}%
\cdots\mathcal{E}_{i_{r}}(\eta_{\lambda})=s_{i_{a+1}}\cdots s_{i_{r}}%
(\eta_{\lambda})\label{PEHW}%
\end{equation}
for any $a=1,\ldots,r-1$.
\end{enumerate}
\end{remarks}

Let ${\mathcal{W}}$ be the random path of \S \ \ref{Subsec-RPW}.\ We define
the random process ${\mathcal{H}}$ setting
\begin{equation}
{\mathcal{H}}={\mathcal{P}}({\mathcal{W}}).\label{defH}%
\end{equation}
For any $\ell\geq1$, we set $H_{\ell}:={\mathcal{H}}(\ell).$ The following
Theorem was established in \cite{LLP3}.

\begin{theorem}
\label{Th_LawH}

\begin{enumerate}
\item[(i)] The random sequence $H:=(H_{\ell})_{\ell\geq1}$ is a Markov chain
with transition matrix
\begin{equation}
\Pi(\mu,\lambda)=\frac{S_{\lambda}(\tau)}{S_{\kappa}(\tau)S_{\mu}(\tau)}%
\tau^{\kappa+\mu-\lambda}m_{\mu,\kappa}^{\lambda} \label{MatrixH}%
\end{equation}
where $\lambda,\mu\in P_{+}$.

\item[(ii)] Assume $\eta\in B(\pi_{\kappa})^{\otimes\ell}$ is a highest weight
path of weight $\lambda$. Then
\[
\mathbb{P}(\mathcal{W}_{\ell}=\eta)=\frac{\tau^{\ell\kappa-\lambda}S_{\lambda
}(\tau)}{S_{\kappa}(\tau)^{\ell}}%
\]

\end{enumerate}
\end{theorem}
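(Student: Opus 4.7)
The plan is to prove assertion (ii) first, as it underlies the main computation for (i). Interpreting $\mathcal{W}_\ell$ (when equated to a highest weight path $\eta$) as the event that the Pitman transform $\mathcal{P}(\Pi_\ell(\mathcal{W}))$ equals $\eta$, I observe via Proposition~\ref{Prop_HP}(iii) and Theorem~\ref{Th_Littel}(i) that $\eta$ is the unique vertex of the connected component $B(\eta)$ annihilated by all the $\tilde{e}_i$, so this event coincides with $\{\Pi_\ell(\mathcal{W}) \in B(\eta)\}$, a disjoint union of cylinders $U_b$ indexed by $b \in B(\eta)$. By (\ref{potimesell}),
\[
\mathbb{P}(\mathcal{P}(\Pi_\ell(\mathcal{W})) = \eta) = \sum_{b \in B(\eta)} \frac{\tau^{\ell\kappa - \mathrm{wt}(b)}}{S_\kappa(\tau)^\ell} = \frac{\tau^{\ell\kappa - \lambda}}{S_\kappa(\tau)^\ell}\sum_{b \in B(\eta)} \tau^{\lambda - \mathrm{wt}(b)},
\]
and the inner sum equals $S_\lambda(\tau)$ by specializing $T_i \mapsto \tau_i$ in (\ref{defS}) and using (\ref{slambda}). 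This yields (ii).

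For (i), I would compute the full joint law of $(H_1, \ldots, H_\ell)$ and then deduce the Markov property and the transition matrix by taking a ratio. Set $\mu_0 = 0$ and fix a chain $\mu_0 \leadsto \mu_1 \leadsto \cdots \leadsto \mu_\ell$ in the multiplicative graph $\mathcal{G}$. Iterating the crystal tensor rule (\ref{step}), I would establish by induction on $\ell$ that the event $\{H_1 = \mu_1, \ldots, H_\ell = \mu_\ell\}$ is precisely the disjoint union of $\prod_{j=0}^{\ell-1} m_{\mu_j,\kappa}^{\mu_{j+1}}$ connected components of $B(\pi_\kappa)^{\otimes \ell}$, each isomorphic to $B(\mu_\ell)$, whose sequence of partial highest weights is $(\mu_1, \ldots, \mu_\ell)$. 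Summing the formula from (ii) over these components gives
\[
\mathbb{P}(H_1 = \mu_1, \ldots, H_\ell = \mu_\ell) = \Bigl(\prod_{j=0}^{\ell-1} m_{\mu_j, \kappa}^{\mu_{j+1}}\Bigr) \cdot \frac{\tau^{\ell\kappa - \mu_\ell} S_{\mu_\ell}(\tau)}{S_\kappa(\tau)^\ell}.
\]

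Taking the ratio of this formula with $\ell$ replaced by $\ell+1$ to the formula above, the product over $j$ telescopes to the single factor $m_{\mu_\ell,\kappa}^{\mu_{\ell+1}}$, the $\tau$-power collapses to $\tau^{\kappa + \mu_\ell - \mu_{\ell+1}}$, and the ratio of characters becomes $S_{\mu_{\ell+1}}(\tau)/(S_\kappa(\tau) S_{\mu_\ell}(\tau))$. The resulting conditional probability depends only on $\mu_\ell$ and equals $\Pi(\mu_\ell, \mu_{\ell+1})$, simultaneously establishing the Markov property and the transition formula (\ref{MatrixH}).

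The main obstacle is justifying the inductive crystal decomposition: given that $B(\pi_\kappa)^{\otimes \ell}$ contains $\prod_{j=0}^{\ell-1} m_{\mu_j,\kappa}^{\mu_{j+1}}$ components of type $B(\mu_\ell)$ with partial highest weights $(\mu_1, \ldots, \mu_\ell)$, tensoring each such $B(\mu_\ell)$ on the right with $B(\pi_\kappa)$ and applying (\ref{step}) produces exactly $m_{\mu_\ell,\kappa}^{\mu_{\ell+1}}$ components of type $B(\mu_{\ell+1})$. This hinges on the fact that the partial highest weight $\mathcal{P}(\Pi_j(b))(j)$ is a function of the connected component of $\Pi_j(b)$ alone (a consequence of Theorem~\ref{Th_Littel}(i)), so that the invariants $\mu_j$ are well-defined on components and the counts multiply.
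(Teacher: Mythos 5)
The paper offers no proof of Theorem \ref{Th_LawH}: it is quoted from \cite{LLP3}, so there is nothing internal to compare against. Your argument is correct and is essentially the standard derivation used in that reference: identify the event $\{\mathcal{H}_{\ell}=\eta\}$ with the connected component $B(\eta)$, evaluate its $p^{\otimes\ell}$-mass via the character specialization $S_{\lambda}(\tau)=\sum_{b\in B(\eta)}\tau^{\lambda-\mathrm{wt}(b)}$ coming from (\ref{slambda}) and (\ref{defS}), and then count the components of $B(\pi_{\kappa})^{\otimes\ell}$ with prescribed partial highest weights to get the joint law and the telescoping ratio. The one small correction: the fact that the sequence of partial highest weights $(\mu_{1},\ldots,\mu_{\ell})$ is constant on each connected component of $B(\pi_{\kappa})^{\otimes\ell}$ is not a consequence of Theorem \ref{Th_Littel}(i) but of the tensor rule (\ref{Ten_Prod}) in Proposition \ref{Prop_HP}, which guarantees that a root operator applied to $\pi_{1}\otimes\cdots\otimes\pi_{\ell}$ acts either within the first $j$ factors (moving $\Pi_{j}(b)$ inside its component) or within the last $\ell-j$ (leaving $\Pi_{j}(b)$ unchanged); with that reference adjusted, the induction and the resulting Markov property are sound.
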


We shall also need the asymptotic behavior of the tensor product
multiplicities established in \cite{LLP3}.

\begin{theorem}
\label{Th_f_assymp}Assume $\overline{m}\in\mathcal{D}_{\kappa}$ (see
(\ref{DefDkappa})). For any $\mu\in P$ and any sequence of dominant weights of
the form $\lambda^{(\ell)}=\ell\overline{m}+o(\ell)$, we have

\begin{enumerate}
\item[(i)] $\underset{\ell\rightarrow+\infty}{\lim}\frac{f_{\lambda^{(\ell
)}-\gamma}^{\ell}}{f_{\lambda^{(\ell)}}^{\ell}}=\tau^{-\gamma}$ for any
$\gamma\in P$.

\item[(ii)] $\underset{\ell\rightarrow+\infty}{\lim}\frac{f_{\lambda^{(\ell
)}/\mu}^{\ell}}{f_{\lambda^{(\ell)}}^{\ell}}=\tau^{-\mu}S_{\mu}(\tau)$.
\end{enumerate}
\end{theorem}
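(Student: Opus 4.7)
The plan is to translate both asymptotics into ratios of probabilities for a random walk confined to $\mathcal{C}$ via Proposition \ref{Prop_util}(ii), to recognize $\lambda \mapsto S_\lambda(\tau)$ as a harmonic function for the killed walk, and to apply a local limit theorem for the associated Doob-transformed chain — which is precisely the chain $H$ of Theorem \ref{Th_LawH}. Proposition \ref{Prop_util}(ii) gives
\[
f^\ell_{\beta/\mu} \;=\; S_\kappa(\tau)^\ell\,\tau^{\beta-\mu-\ell\kappa}\, \mathbb{P}_\mu\!\bigl(W_\ell=\beta,\ \mathcal{W}(t)\in\mathcal{C}\ \text{for all}\ t\in[0,\ell]\bigr),
\]
so the two ratios in the theorem reduce, up to the explicit prefactors $\tau^{-\gamma}$ and $\tau^{-\mu}$, to the ratios
\[
\frac{\mathbb{P}_0(W_\ell=\lambda^{(\ell)}-\gamma,\,\mathrm{stay})}{\mathbb{P}_0(W_\ell=\lambda^{(\ell)},\,\mathrm{stay})} \quad\text{and}\quad \frac{\mathbb{P}_\mu(W_\ell=\lambda^{(\ell)},\,\mathrm{stay})}{\mathbb{P}_0(W_\ell=\lambda^{(\ell)},\,\mathrm{stay})}.
\]

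To handle these, I first establish that $\mu \mapsto S_\mu(\tau)$ is harmonic for the sub-Markov kernel $K(\mu,\lambda):=\mathbb{P}_\mu(W_1=\lambda,\,\mathrm{stay})$. This follows from evaluating the character identity $s_\mu s_\kappa=\sum_\lambda m^\lambda_{\mu,\kappa}s_\lambda$ at the torus point $e^{-\alpha_i}=\tau_i$ (so that $s_\nu$ becomes $\tau^{-\nu}S_\nu(\tau)$ by (\ref{defS})) and comparing with the $\ell=1$ case of Proposition \ref{Prop_util}(ii): one obtains $\sum_\lambda K(\mu,\lambda)\,S_\lambda(\tau)=S_\mu(\tau)$. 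The Doob $h$-transform with $h=S_\cdot(\tau)$ then has transition matrix precisely equal to $\Pi$ of Theorem \ref{Th_LawH}(i), and yields the key identity
\[
\mathbb{P}_\mu(W_\ell=\beta,\,\mathrm{stay}) \;=\; \frac{S_\mu(\tau)}{S_\beta(\tau)}\,\hat{\mathbb{P}}_\mu(H_\ell=\beta).
\]
Since $H$ is an aperiodic irreducible Markov chain on a sublattice of $P_+$ with bounded-support increments and drift $\overline{m}\in\mathring{\mathcal{C}}$, a uniform local central limit theorem (standard under exponential moments) gives $\hat{\mathbb{P}}_\mu(H_\ell=\beta_\ell) \sim C_\mu\,\ell^{-n/2}\exp\!\bigl(-\|\beta_\ell-\ell\overline{m}\|_\Sigma^2/(2\ell)\bigr)$ throughout the moderate deviations regime $\beta_\ell-\ell\overline{m}=o(\ell)$.

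Two elementary ratio estimates close the argument. First, by the Weyl character formula
\[
S_\lambda(\tau) \;=\; \frac{1}{\prod_{\alpha\in R_+}(1-\tau^\alpha)}\sum_{w\in\mathsf{W}}(-1)^{l(w)}\tau^{(\lambda+\rho)-w(\lambda+\rho)},
\]
and the fact that $(\lambda+\rho)-w(\lambda+\rho)\in Q_+\setminus\{0\}$ grows unboundedly for each $w\neq 1$ as $\lambda\to\infty$ in $\mathring{\mathcal{C}}$ (while $\tau_i\in(0,1)$), one gets $S_\lambda(\tau)\to L(\tau):=\prod_{\alpha\in R_+}(1-\tau^\alpha)^{-1}$; in particular $S_{\lambda^{(\ell)}}(\tau)/S_{\lambda^{(\ell)}-\gamma}(\tau)\to 1$. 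Second, comparing Gaussian factors at $\beta_\ell$ and $\beta_\ell-\gamma$ (respectively at different starting points $\mu$ and $0$) produces factors of the form $\exp\!\bigl(\langle\gamma,\beta_\ell-\ell\overline{m}\rangle_\Sigma/\ell+O(1/\ell)\bigr)\to 1$, since $\beta_\ell-\ell\overline{m}=o(\ell)$ and $\gamma,\mu$ are fixed. Plugging these into the two ratios yields $\tau^{-\gamma}$ for part (i) and $\tau^{-\mu}\cdot S_\mu(\tau)/S_0(\tau)=\tau^{-\mu}S_\mu(\tau)$ for part (ii), using $S_0(\tau)=1$. The main obstacle is the uniform local limit theorem for the Markov chain $H$ in the moderate deviations regime: aperiodicity may require restriction to the residue class of $P$ generated by the weights of $V(\kappa)$, and uniform validity requires Cramér-type exponential moments (automatic here since increments are bounded). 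Once this LLT is in hand, the remaining ingredients — the harmonicity identity and the Weyl character asymptotics — are purely algebraic and immediate.
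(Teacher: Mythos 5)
The paper does not prove this theorem: it is imported verbatim from \cite{LLP3}, so your proposal has to stand entirely on its own. Its algebraic skeleton is sound. The translation via Proposition \ref{Prop_util}(ii), the harmonicity $\sum_\lambda K(\mu,\lambda)S_\lambda(\tau)=S_\mu(\tau)$ obtained by specializing $s_\mu s_\kappa=\sum_\lambda m^\lambda_{\mu,\kappa}s_\lambda$ at $e^{-\alpha_i}=\tau_i$, the identification of the Doob transform with the matrix (\ref{MatrixH}), the resulting identity $\mathbb{P}_\mu(W_\ell=\beta,\mathrm{stay})=\frac{S_\mu(\tau)}{S_\beta(\tau)}\hat{\mathbb{P}}_\mu(H_\ell=\beta)$, and the convergence $S_{\lambda}(\tau)\to\prod_{\alpha\in R_+}(1-\tau^\alpha)^{-1}$ as $\lambda\to\infty$ inside $\mathring{\mathcal{C}}$ are all correct and correctly used.

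The gap is the local limit theorem you invoke for $H$, and it is not a technicality. First, $H$ is not a random walk but a spatially inhomogeneous Markov chain (its kernel depends on the position through $S_\lambda(\tau)/S_\mu(\tau)$ and through $m^\lambda_{\mu,\kappa}$ near the walls); there is no off-the-shelf "uniform LLT under exponential moments" for such chains, and in the regime $\beta_\ell-\ell\overline{m}=o(\ell)$ even the i.i.d. LLT requires Cram\'er corrections, so the asymptotic you write is not literally valid there (only ratios at points differing by a bounded amount survive). Second, and more seriously, your part (ii) reduces after the Doob-transform algebra to the claim $\hat{\mathbb{P}}_\mu(H_\ell=\lambda^{(\ell)})/\hat{\mathbb{P}}_0(H_\ell=\lambda^{(\ell)})\to 1$, i.e.\ to the statement that your LLT constant $C_\mu$ does not depend on the starting point. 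You never address this, and it is exactly equivalent (via Proposition \ref{Prop_util}(ii)) to assertion (ii) itself: for Markov chains the start-point dependence of the LLT prefactor is precisely where a harmonic function would appear, so assuming $C_\mu=C_0$ is assuming the conclusion. To close the argument one must either prove an LLT for $H$ by coupling it with the homogeneous walk $W$ far from the walls (a substantive piece of analysis, not a citation), or — the more standard route — express $f^\ell_{\lambda/\mu}$ as the Weyl alternating sum of weight multiplicities of the \emph{unrestricted} walk and apply the classical quotient LLT for i.i.d.\ sums term by term, checking that the $w\neq e$ terms are negligible; as written, the proposal's analytic core is missing.
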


\begin{corollary}
Under the assumptions of the previous theorem, we also have%
\[
\underset{\ell\rightarrow+\infty}{\lim}\frac{f_{\lambda^{(\ell)}}^{\ell
-\ell_{0}}}{f_{\lambda^{(\ell)}}^{\ell}}=\frac{1}{\tau^{-\ell_{0}\kappa
}S_{\kappa}^{\ell_{0}}(\tau)}%
\]
for any nonnegative integer $\ell_{0}$.
\end{corollary}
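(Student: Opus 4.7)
The plan is to leverage the asymptotics of Theorem \ref{Th_f_assymp} by relating $f_{\lambda^{(\ell)}}^{\ell-\ell_{0}}$ to $f_{\lambda^{(\ell)}}^{\ell}$ through a branching identity obtained from the factorization $V(\kappa)^{\otimes\ell}=V(\kappa)^{\otimes\ell_{0}}\otimes V(\kappa)^{\otimes(\ell-\ell_{0})}$.

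First I would decompose $V(\kappa)^{\otimes\ell_{0}}=\bigoplus_{\mu}V(\mu)^{\oplus f_{\mu}^{\ell_{0}}}$, tensor on the right with $V(\kappa)^{\otimes(\ell-\ell_{0})}$, and read off the multiplicity of $V(\lambda)$ on both sides. This produces the identity
\[
f_{\lambda}^{\ell}=\sum_{\mu\in P_{+}}f_{\mu}^{\ell_{0}}\,f_{\lambda/\mu}^{\ell-\ell_{0}},
\]
in which the sum is effectively finite since $f_{\mu}^{\ell_{0}}\neq 0$ only for finitely many $\mu$ (those with $\ell_{0}\kappa-\mu\in Q_{+}$).

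Next I would specialize $\lambda=\lambda^{(\ell)}$, divide by $f_{\lambda^{(\ell)}}^{\ell-\ell_{0}}$, and apply Theorem \ref{Th_f_assymp}(ii) with index $\ell-\ell_{0}$. A small verification is needed here: the hypothesis requires $\lambda^{(\ell)}=(\ell-\ell_{0})\overline{m}+o(\ell-\ell_{0})$, and since $\ell_{0}$ is fixed we have $\lambda^{(\ell)}-(\ell-\ell_{0})\overline{m}=\ell_{0}\overline{m}+o(\ell)$, which is indeed $o(\ell-\ell_{0})$. Because the $\mu$-sum is finite, limit and sum commute without issue, and
\[
\lim_{\ell\to\infty}\frac{f_{\lambda^{(\ell)}}^{\ell}}{f_{\lambda^{(\ell)}}^{\ell-\ell_{0}}}=\sum_{\mu}f_{\mu}^{\ell_{0}}\tau^{-\mu}S_{\mu}(\tau).
\]

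Finally I would recognize the right-hand side as a character specialization. Taking characters of $V(\kappa)^{\otimes\ell_{0}}=\bigoplus_{\mu}V(\mu)^{\oplus f_{\mu}^{\ell_{0}}}$ gives $s_{\kappa}^{\ell_{0}}=\sum_{\mu}f_{\mu}^{\ell_{0}}s_{\mu}$. Specializing $e^{-\alpha_{i}}=\tau_{i}$ (which sends $e^{\gamma}$ to $\tau^{-\gamma}$) and using the factorization $s_{\lambda}=e^{\lambda}S_{\lambda}(\tau)$ from \eqref{defS}, I obtain
\[
\tau^{-\ell_{0}\kappa}S_{\kappa}(\tau)^{\ell_{0}}=\sum_{\mu}f_{\mu}^{\ell_{0}}\tau^{-\mu}S_{\mu}(\tau),
\]
and taking reciprocals yields the announced formula. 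The only conceptually non-routine step is the branching identity; the asymptotic passage and the character evaluation are mechanical once the identity is in place.
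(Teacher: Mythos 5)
Your argument is correct. The branching identity $f_{\lambda}^{\ell}=\sum_{\mu}f_{\mu}^{\ell_{0}}f_{\lambda/\mu}^{\ell-\ell_{0}}$ is valid, the sum is finite, your verification that Theorem \ref{Th_f_assymp}(ii) applies at the shifted index $\ell-\ell_{0}$ is exactly the point that needs checking, and the character specialization $e^{\gamma}\mapsto\tau^{-\gamma}$ correctly turns $s_{\kappa}^{\ell_{0}}=\sum_{\mu}f_{\mu}^{\ell_{0}}s_{\mu}$ into $\tau^{-\ell_{0}\kappa}S_{\kappa}(\tau)^{\ell_{0}}=\sum_{\mu}f_{\mu}^{\ell_{0}}\tau^{-\mu}S_{\mu}(\tau)$, so the reciprocal gives the claimed limit. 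The paper takes a slightly different, more economical route: it only uses the $\ell_{0}=1$ case of your branching identity, which collapses to $f_{\lambda}^{\ell}=f_{\lambda/\kappa}^{\ell-1}$ (since $f_{\mu}^{1}=\delta_{\mu,\kappa}$), deduces $\lim_{\ell}f_{\lambda^{(\ell)}}^{\ell-1}/f_{\lambda^{(\ell)}}^{\ell}=1/(\tau^{-\kappa}S_{\kappa}(\tau))$ from Theorem \ref{Th_f_assymp}(ii) with $\mu=\kappa$, and then obtains general $\ell_{0}$ by telescoping the product $\prod_{j=1}^{\ell_{0}}f_{\lambda^{(\ell)}}^{\ell-j}/f_{\lambda^{(\ell)}}^{\ell-j+1}$. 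The telescoping avoids both the interchange of limit with the $\mu$-sum and the character-specialization identity; your version is a one-step argument but requires those two extra (routine) verifications. Both proofs rest on the same asymptotic input applied at a shifted index, so the difference is one of bookkeeping rather than substance.
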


\begin{proof}
We first consider the case where $\ell_{0}=1$. By definition of the tensor
product multiplicities in (\ref{def_f}) we have $s_{\kappa}^{\ell}%
=\sum_{\lambda\in P_{+}}f_{\lambda}^{\ell}s_{\lambda}$ but also $s_{\kappa
}^{\ell}=s_{\kappa}\times s_{\kappa}^{\ell-1}=\sum_{\lambda\in P_{+}%
}f_{\lambda/\kappa}^{\ell-1}s_{\lambda}$. Therefore $f_{\lambda}^{\ell
}=f_{\lambda/\kappa}^{\ell-1}$ for any $\ell\geq1$ and any $\lambda\in P_{+}$.
We get
\begin{equation}
\underset{\ell\rightarrow+\infty}{\lim}\frac{f_{\lambda^{(\ell)}}^{\ell-1}%
}{f_{\lambda^{(\ell)}}^{\ell}}=\underset{\ell\rightarrow+\infty}{\lim}%
\frac{f_{\lambda^{(\ell)}}^{\ell-1}}{f_{\lambda^{(\ell)}/\kappa}^{\ell-1}%
}=\frac{1}{\tau^{-\kappa}S_{\kappa}(\tau)} \label{inter}%
\end{equation}
by assertion (ii) of Theorem. Now observe that for any $\ell_{0}\geq1$ we
have
\[
\frac{f_{\lambda^{(\ell)}}^{\ell-\ell_{0}}}{f_{\lambda^{(\ell)}}^{\ell}}%
=\frac{f_{\lambda^{(\ell)}}^{\ell-\ell_{0}}}{f_{\lambda^{(\ell)}}^{\ell
-\ell_{0}+1}}\times\cdots\times\frac{f_{\lambda^{(\ell)}}^{\ell-1}}%
{f_{\lambda^{(\ell)}}^{\ell}}.
\]
By using (\ref{inter}) each component of the previous product tends to
$\frac{1}{\tau^{-\kappa}S_{\kappa}(\tau)}$ when $\ell$ tends to infinity which
gives the desired limit.
\end{proof}

\bigskip

The previous theorem also implies that the drift $\overline{m}$ determines the
probability distribution on $B(\pi_{\kappa})$. More precisely, consider $p$
and $p^{\prime}$ two probability distributions defined on $B(\pi_{\kappa})$
from $\tau\in]0,1[^{n}$ and $\tau^{\prime}\in]0,1[^{n}$, respectively. Set
$m=\sum_{\pi\in B(\pi_{\kappa})}p_{\pi}\pi$ and $m^{\prime}=\sum_{\pi\in
B(\pi_{\kappa})}p_{\pi}^{\prime}\pi$.

\begin{proposition}
\label{Prop_F_assymp}We have $\overline{m}=\overline{m}^{\prime}$ if and only
if $\tau=\tau^{\prime}$. Therefore, the map which associates to any $\tau
\in]0,1[^{n}$ the drift $\overline{m}\in\mathcal{D}_{\kappa}$ is a one-to-one correspondence.
\end{proposition}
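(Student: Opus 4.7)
My plan is to observe that the asymptotics in Theorem \ref{Th_f_assymp} already encode the entire vector $\tau$, while depending on $\tau$ only through the drift $\overline{m}$. Concretely, the implication $\tau=\tau'\Rightarrow \overline{m}=\overline{m}'$ is immediate from the definition of $p$, so all the work is in the converse.

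Assume $\overline{m}=\overline{m}'\in\mathcal{D}_\kappa$. Since $\overline{m}$ lies in the open Weyl chamber, the sequence $\ell\overline{m}$ stays deep inside $\mathring{\mathcal{C}}$, and we may choose a sequence of dominant weights $\lambda^{(\ell)}\in P_+$ with $\lambda^{(\ell)}=\ell\overline{m}+o(\ell)$ and with $f_{\lambda^{(\ell)}}^\ell>0$ for all sufficiently large $\ell$ (pick, for instance, the nearest lattice point in $P_+$; existence of non-vanishing multiplicities for such a typical endpoint follows from Proposition \ref{Prop_util}(ii) applied to the random walk of drift $\overline{m}$). For any fixed $\gamma\in P$, the shifted weights $\lambda^{(\ell)}-\gamma$ remain dominant for $\ell$ large and still satisfy $\lambda^{(\ell)}-\gamma=\ell\overline{m}+o(\ell)$.

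Now I apply Theorem \ref{Th_f_assymp}(i) twice to the very same sequence $(\lambda^{(\ell)})_\ell$: once with the parameter $\tau$, giving
\[
\lim_{\ell\to+\infty}\frac{f^{\ell}_{\lambda^{(\ell)}-\gamma}}{f^{\ell}_{\lambda^{(\ell)}}}=\tau^{-\gamma},
\]
and once with $\tau'$, giving the same limit equal to $(\tau')^{-\gamma}$. Since the ratio on the left is defined purely in terms of tensor multiplicities and does not know about $\tau$ or $\tau'$, we conclude $\tau^{-\gamma}=(\tau')^{-\gamma}$ for every $\gamma\in P$. Specialising $\gamma=\alpha_i$ (which lies in $P$ since $Q\subset P$) yields $\tau_i=\tau_i'$ for each $i=1,\ldots,n$, hence $\tau=\tau'$.

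Finally, the bijectivity statement is a formality: the map $\tau\mapsto \overline{m}$ sends $]0,1[^n$ onto $\mathcal{D}_\kappa$ by the very definition (\ref{DefDkappa}), and the argument above shows injectivity. The only step that requires a little care is the existence of a dominant approximating sequence $\lambda^{(\ell)}$ with non-zero multiplicities; I expect this to be the technical point, but it is handled by combining $\overline{m}\in\mathring{\mathcal{C}}$ with the observation that typical endpoints of the random walk of drift $\overline{m}$ conditioned to stay in $\mathcal{C}$ approximate $\ell\overline{m}$ and satisfy $f^{\ell}_{\lambda^{(\ell)}}>0$ thanks to Proposition \ref{Prop_util}(ii).
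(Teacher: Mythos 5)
Your proposal is correct and follows essentially the same route as the paper: apply Theorem \ref{Th_f_assymp}(i) with both parameters to the same sequence $\lambda^{(\ell)}=\ell\overline{m}+o(\ell)$, note the multiplicity ratio is independent of $\tau$, deduce $\tau^{\gamma}=(\tau')^{\gamma}$ for all $\gamma\in P$, and specialise to $\gamma=\alpha_i$. The paper's proof is just a terser version of yours; your extra care about the existence of a dominant approximating sequence with non-vanishing multiplicities is a reasonable addition that the paper leaves implicit.
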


\begin{proof}
Assume $\overline{m}=\overline{m}^{\prime}$.\ By applying assertion (i) of
Theorem \ref{Th_f_assymp}, we get $\tau^{\gamma}=(\tau^{\prime})^{\gamma}$ for
any $\gamma\in P$.\ Consider $i\in\{1,\ldots,n\}$. For $\gamma=\alpha_{i}$, we
obtain $\tau_{i}=\tau_{i}^{\prime}$. Therefore $\tau=\tau^{\prime}$.
\end{proof}

\section{Some Limit theorems for the Pitman process}

\subsection{The law of large numbers and the central limit theorem for
${\mathcal{W}}$}

We start by establishing two classical limit theorems for $\mathcal{W}$,
deduced from the law of large numbers and the central limit theorem for the
random walk $W=(W_{\ell})_{\ell\geq1}=(X_{1}+\cdots+X_{\ell})_{\ell\geq1}$.
Recall that $m=\sum_{\pi\in B(\pi_{\kappa})}p_{\pi}\pi$ and $\overline
{m}=m(1)$.\ Write $m^{\otimes\infty}$ for the random path such that
\[
m^{\otimes\infty}(t)=\ell\overline{m}+m(t-\ell)\text{ for any }t>0
\]
where $\ell=\left\lfloor t\right\rfloor .\ $

Let $\displaystyle\Gamma=(\Gamma_{i,j})_{1\leq i,j\leq n}=\ ^{t}X_{\ell}\cdot
X_{\ell}$ be the common covariance matrix of each random variable $X_{\ell}$.

\begin{theorem}
\label{ThLLN}Let ${\mathcal{W}}$ be a random path defined on $(B(\pi_{\kappa
})^{\otimes\mathbb{Z}_{\geq0}},p^{\otimes{\mathbb{Z}_{\geq0}}})$ with drift
path $m$. Then, we have
\[
\lim_{\ell\rightarrow+\infty}\frac{1}{\ell}\sup_{t\in\lbrack0,\ell]}\left\Vert
{\mathcal{W}}(t)-m^{\otimes\infty}(t)\right\Vert =0\text{ almost surely.}%
\]
\bigskip Furthermore, the family of random variables $\displaystyle\left(
{\frac{{\mathcal{W}}(t)-m^{\otimes\infty}(t)}{\sqrt{t}}}\right)  _{t>0}$
converges in law as $t\rightarrow+\infty$ towards a centered Gaussian law
$\mathcal{N}(0,\Gamma)$.

{More precisely, setting $\displaystyle{\mathcal{W}^{(\ell)}}(t):={\frac
{{\mathcal{W}}(\ell t)-m^{\otimes\infty}(\ell t)}{\sqrt{\ell}}}$ for any
$0\leq t\leq1$ and $\ell\geq1$, the sequence of random processes
$(\mathcal{W}^{(\ell)}(t))_{\ell\geq1}$ converges to a $n$-dimensional
Brownian motion $(B_{\Gamma}(t))_{0\leq t\leq1}$ with covariance matrix
$\Gamma$. }
\end{theorem}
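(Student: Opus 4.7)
The plan is to reduce all three assertions to classical limit theorems for the embedded random walk $W=(W_\ell)_{\ell\geq 0}$ with $W_\ell=\mathcal{W}(\ell)=X_1(1)+\cdots+X_\ell(1)$. Its increments are the i.i.d.\ random vectors $X_k(1)\in\mathbb{R}^n$ with mean $\overline{m}$ and covariance $\Gamma$. The starting observation is that between integer times, $\mathcal{W}$ deviates from the embedded walk by a uniformly bounded amount: writing $t=\ell+s$ with $\ell=\lfloor t\rfloor$ and $s\in[0,1]$, one has
$$\mathcal{W}(t)-m^{\otimes\infty}(t)=\bigl(W_\ell-\ell\,\overline{m}\bigr)+\bigl(X_{\ell+1}(s)-m(s)\bigr),$$
where the second summand is bounded in norm by $C:=2\max_{\pi\in B(\pi_\kappa)}\|\pi\|_\infty$, which is finite because $B(\pi_\kappa)$ is finite and each of its elements is continuous on $[0,1]$.

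For the uniform strong law, I would first invoke the multidimensional SLLN to get $W_\ell/\ell\to\overline{m}$ almost surely, then deduce by a standard Cesaro-type argument that $N^{-1}\max_{\ell\leq N}\|W_\ell-\ell\,\overline{m}\|\to 0$ a.s. (for any $\varepsilon>0$, pick a random $\ell_0$ with $\|W_\ell-\ell\overline{m}\|\leq\varepsilon\ell$ for $\ell\geq\ell_0$, and split the maximum). Combined with the uniform bound $C$ on the interpolation error, this yields
$$\frac{1}{N}\sup_{t\in[0,N]}\|\mathcal{W}(t)-m^{\otimes\infty}(t)\|\leq\frac{1}{N}\max_{\ell\leq N}\|W_\ell-\ell\,\overline{m}\|+\frac{C}{N}\xrightarrow[N\to\infty]{}0\quad\text{a.s.}$$

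For the pointwise central limit theorem, I would divide the above decomposition by $\sqrt{t}$: the interpolation term converges to $0$ in probability since its numerator is bounded, while the main term $(W_{\lfloor t\rfloor}-\lfloor t\rfloor\overline{m})/\sqrt{t}$ converges in law to $\mathcal{N}(0,\Gamma)$ by the classical multivariate CLT together with $\lfloor t\rfloor/t\to 1$ and Slutsky's lemma. For the functional statement, Donsker's invariance principle applied to the centered i.i.d.\ sequence $X_k(1)-\overline{m}$ gives convergence in law on $C([0,1],\mathbb{R}^n)$ of the linearly interpolated process $\tilde{W}^{(\ell)}(t):=\ell^{-1/2}\bigl(W_{\lfloor\ell t\rfloor}-\lfloor\ell t\rfloor\,\overline{m}\bigr)$ to a Brownian motion $B_\Gamma$ with covariance $\Gamma$. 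Since the difference $\mathcal{W}^{(\ell)}-\tilde{W}^{(\ell)}$ is uniformly bounded by $C/\sqrt{\ell}$, its sup-norm tends to $0$ and Slutsky's lemma in $C([0,1],\mathbb{R}^n)$ transfers the convergence to $\mathcal{W}^{(\ell)}$.

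The argument invokes only the classical SLLN, multivariate CLT, and Donsker's theorem for i.i.d.\ $\mathbb{R}^n$-valued random vectors; there is no serious obstacle. The only point that genuinely needs checking is the uniform boundedness of the interpolation error, which is immediate from the finiteness of $B(\pi_\kappa)$ and continuity of its paths on the compact interval $[0,1]$.
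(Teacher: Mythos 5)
Your proposal is correct and follows essentially the same route as the paper: the same decomposition of $\mathcal{W}(t)-m^{\otimes\infty}(t)$ into the embedded-walk term $W_{\lfloor t\rfloor}-\lfloor t\rfloor\,\overline{m}$ plus a uniformly bounded interpolation error, then the classical SLLN, the multivariate CLT with Slutsky, and Donsker's theorem transferred via the $O(1/\sqrt{\ell})$ sup-norm comparison. The only cosmetic difference is that you spell out the Cesàro-type step for the uniform law of large numbers, which the paper treats as immediate.
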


\begin{proof}
Fix $\ell\geq1$ and observe that
\[
\sup_{t\in\lbrack0,\ell]}\left\Vert {\mathcal{W}}(t)-m^{\otimes\infty
}(t)\right\Vert =\sup_{0\leq k\leq\ell-1}\sup_{t\in\lbrack k,k+1]}\left\Vert
{\mathcal{W}}(t)-k\overline{m}-m(t-k)\right\Vert .
\]
For any $0\leq k\leq\ell$ and $t\in\lbrack k,k+1]$, we have ${\mathcal{W}%
}(t)=W_{k}+X_{k+1}(t-k)$ so that
\begin{equation}
{\mathcal{W}}(t)-m^{\otimes\infty}(t)=W_{k}-k\overline{m}\quad+\quad
\bigl(X_{k+1}(t-k)-m(t-k)\bigr) \label{decomposition}%
\end{equation}
with $\displaystyle\sup_{t\in\lbrack k,k+1]}\left\Vert X_{k+1}%
(t-k)-m(t-k)\right\Vert =\sup_{t\in\lbrack0,1]}\left\Vert X_{k+1}%
(t)-m(t)\right\Vert \leq+2L$, since both paths in $B(\kappa)$ and $m$ have
length $L$, by Proposition \ref{Prop_Length}. It readily follows that
\begin{equation}
\sup_{t\in\lbrack0,\ell]}\left\Vert {\mathcal{W}}(t)-m^{\otimes\infty
}(t)\right\Vert \leq\sup_{0\leq k\leq\ell}\Vert W_{k}-k\overline{m}\Vert+2L.
\label{majoration2L}%
\end{equation}
By the law of large number for the random walk $W=(W_{k})_{k\geq1}$, one gets
$\underset{k\rightarrow+\infty}{\lim}\frac{1}{k}\left\Vert W_{k}-k\overline
{m}\right\Vert =0$ almost surely; this readily implies%
\[
\lim_{\ell\rightarrow+\infty}\frac{1}{\ell}\sup_{t\in\lbrack0,\ell]}\left\Vert
{\mathcal{W}}(t)-m^{\otimes\infty}(t)\right\Vert =0\text{ almost surely.}%
\]

Let us now prove the central limit theorem; for any $t>0$, set $k_{t}%
:=\left\lfloor t\right\rfloor $ and notice that decomposition
(\ref{decomposition}) yields to
\begin{equation}
{\frac{\mathcal{W}(t)-m^{\otimes\infty}(t)}{\sqrt{t}}}=\sqrt{\frac{k_{t}}{t}%
}\times{\frac{W_{k_{t}}-k_{t}\overline{m}}{\sqrt{k_{t}}}}\quad+\quad
{\frac{X_{k_{t}+1}(t-k_{t})-m(t-k_{t})}{\sqrt{t}}}%
\end{equation}
By the central limit theorem in $\mathbb{R}^{n}$, one knows that the sequence
of random variables $\displaystyle\left(  {\frac{W_{k}-k\overline{m}}{\sqrt
{k}}}\right)  _{k\geq1}$ converges in law as $k\rightarrow+\infty$ towards a
centered Gaussian law $\mathcal{N}(0,\Gamma)$; on the other hand, one gets
$\displaystyle\lim_{t\rightarrow+\infty}\sqrt{\frac{k_{t}}{t}}=1$ and
$\displaystyle\limsup_{t\rightarrow+\infty}\Big\Vert{\frac{X_{k_{t}+1}%
(t-k_{t})-m(t-k_{t})}{\sqrt{t}}}\Big\Vert\leq\limsup_{t\rightarrow+\infty
}{\frac{2L}{\sqrt{t}}}=0$, so one may conclude using Slutsky theorem.

{The convergence of the sequence $(\mathcal{W}_{\ell}(t))_{\ell\geq1}$ towards
a Brownian motion goes along the same line One {sets
\[
W^{(\ell)}(t):={\frac{W_{\left\lfloor \ell t\right\rfloor }+(\ell
t-\left\lfloor \ell t\right\rfloor )X_{\left\lfloor \ell t\right\rfloor
+1}(1)-\ell t\overline{m}}{\sqrt{\ell}}}\qquad\mbox{\rm for all}\ \ell
\geq1\ \mbox{\rm and}\ 0\leq t\leq1
\]
and observes that $\displaystyle\Bigl\Vert\mathcal{W}^{(\ell)}(t)-{W}^{(\ell
)}(t)\Bigr\Vert\leq{\frac{2}{\sqrt{\ell}}}(\Vert m\Vert_{\infty}+\Vert
X_{\left\lfloor nt\right\rfloor +1}\Vert_{\infty})$.}}
\end{proof}

\subsection{The law of large numbers and the central limit theorem for
$\mathcal{H}$}

To prove the law of large numbers and the central limit theorem for
$\mathcal{H}$, we need the two following preparatory lemmas. Consider a simple
root $\alpha$ and a trajectory $\eta\in\Omega$ such that $\frac{1}{\ell
}\langle\eta(\ell),\alpha^{\vee}\rangle$ converges to a \emph{positive} limit
when $\ell$ tends to infinity.

\begin{lemma}
\label{lemma_restrict}There exists a nonnegative integer $\ell_{0}$ such that
for any $\ell\geq\ell_{0}$
\[
\inf_{t\in\lbrack0,\ell]}\langle\eta(t),\alpha^{\vee}\rangle=\inf_{t\in
\lbrack0,\ell_{0}]}\langle\eta(t),\alpha^{\vee}\rangle.
\]

\end{lemma}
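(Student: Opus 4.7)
The plan is to exploit the linear growth of $\langle \eta(\ell), \alpha^{\vee}\rangle$ at integer times to show that once $\ell$ is large enough, the infimum over $[0,\ell]$ can only be attained on an initial prefix. Set $c := \lim_{\ell \to \infty} \tfrac{1}{\ell}\langle \eta(\ell), \alpha^{\vee}\rangle > 0$. The strategy is first to upgrade this to a linear lower bound valid for \emph{all} large real $t$ (not just integer ones), and then to use this growth to argue that points far to the right cannot undercut the infimum attained on a bounded initial segment.

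First I would control the variation of $\eta$ between consecutive integer times. By assertion (ii) of Proposition \ref{Prop_Length}, every elementary path $\pi \in B(\pi_{\kappa})$ has the same length $L$, hence for $t \in [k, k+1]$ with $k \in \mathbb{Z}_{\geq 0}$ one has $\|\eta(t) - \eta(k)\| \leq L$, and therefore
\[
\bigl|\langle \eta(t), \alpha^{\vee}\rangle - \langle \eta(k), \alpha^{\vee}\rangle\bigr| \leq L\,\|\alpha^{\vee}\|.
\]
Next, choose an integer $N$ such that $\langle \eta(k), \alpha^{\vee}\rangle \geq \tfrac{c}{2}\,k$ for every integer $k \geq N$; combining with the previous estimate, this yields for every real $t \geq N$
\[
\langle \eta(t), \alpha^{\vee}\rangle \;\geq\; \tfrac{c}{2}\lfloor t \rfloor - L\,\|\alpha^{\vee}\| \;\xrightarrow[t \to +\infty]{}\; +\infty.
\]

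Finally, I would denote $M := \inf_{t \in [0,N]} \langle \eta(t), \alpha^{\vee}\rangle$, which is finite because $\eta$ is continuous on the compact interval $[0,N]$, and then pick $\ell_{0} \geq N$ large enough that $\tfrac{c}{2}\lfloor t \rfloor - L\,\|\alpha^{\vee}\| > M$ for every $t \geq \ell_{0}$. Then for any $\ell \geq \ell_{0}$ and any $t \in [\ell_{0}, \ell]$ one has $\langle \eta(t), \alpha^{\vee}\rangle > M \geq \inf_{t \in [0,\ell_{0}]} \langle \eta(t), \alpha^{\vee}\rangle$, so the infimum of $\langle \eta(\cdot), \alpha^{\vee}\rangle$ over $[0,\ell]$ is achieved inside $[0,\ell_{0}]$, which is precisely the claimed equality.

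The only mildly delicate point is the first step: transferring information that a priori lives only at integer times (where the hypothesis is stated) into a bound valid on the whole real half-line. The uniform length of the elementary paths, provided by Proposition \ref{Prop_Length}, makes this automatic; everything else is the elementary observation that linear growth eventually dominates any finite infimum.
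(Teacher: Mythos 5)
Your proof is correct and follows essentially the same route as the paper's: bound the oscillation of $\langle\eta(\cdot),\alpha^{\vee}\rangle$ between consecutive integer times by the common length $L$ of the elementary paths, deduce that $\langle\eta(t),\alpha^{\vee}\rangle\to+\infty$ as $t\to+\infty$, and conclude that the infimum is attained on a bounded initial segment. Your version is merely more quantitative (explicit linear lower bound and explicit choice of $\ell_{0}$) than the paper's, which ends with a terser continuity argument.
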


\begin{proof}
Since $\frac{1}{\ell}\langle\eta(\ell),\alpha^{\vee}\rangle$ converges to a
positive limit, we have in particular that $\underset{\ell\rightarrow+\infty
}{\lim}\langle\eta(\ell),\alpha^{\vee}\rangle=+\infty$. Consider $t>0$ and set
$\ell=\left\lfloor t\right\rfloor $. We can write by definition of $\eta
\in\Omega,$ $\eta(t)=\eta(\ell)+\pi(t-\ell)$ where $\pi$ is a path of
$B(\pi_{\kappa})$. So $\langle\eta(t),\alpha^{\vee}\rangle=\langle\eta
(\ell),\alpha^{\vee}\rangle+\langle\pi(t-\ell),\alpha^{\vee}\rangle$. Since
$\pi\in B(\pi_{\kappa})$, we have
\[
\left\Vert \pi(t-\ell)\right\Vert \leq L
\]
where $L$ is the common length of the paths in $B(\pi_{\kappa})$. So the
possible values of $\langle\pi(t-\ell),\alpha^{\vee}\rangle$ are bounded.
Since $\underset{\ell\rightarrow+\infty}{\lim}\langle\eta(\ell),\alpha^{\vee
}\rangle=+\infty,$ we also get $\lim_{t\rightarrow+\infty}\langle
\eta(t),\alpha^{\vee}\rangle=+\infty$. Recall that $\eta(0)=0$. Therefore
$\inf_{t\in\lbrack0,\ell]}\langle\eta(t),\alpha^{\vee}\rangle\leq0$.\ Since
$\lim_{t\rightarrow+\infty}\langle\eta(t),\alpha^{\vee}\rangle=+\infty$ and
the path $\eta$ is continuous, there should exist an integer $\ell_{0}$ such
that $\inf_{t\in\lbrack0,\ell_{0}]}\langle\eta(t),\alpha^{\vee}\rangle
=\inf_{t\in\lbrack0,\ell_{0}]}\langle\eta(t),\alpha^{\vee}\rangle$ for any
$\ell\geq\ell_{0}$.
\end{proof}

\begin{lemma}
\label{Lem_rec} \ 

\begin{enumerate}
\item[(i)] Consider a simple root $\alpha$ and a trajectory $\eta\in\Omega$
such that $\frac{1}{\ell}\langle\eta(\ell),\alpha^{\vee}\rangle$ converges to
a \emph{positive} limit when $\ell$ tends to infinity. We have for any simple
root $\alpha$%
\[
\sup_{t\in\lbrack0,+\infty\lbrack}\left\Vert {\mathcal{P}}_{\alpha}%
(\eta)(t)-\eta(t)\right\Vert <+\infty
\]
in particular, $\frac{1}{\ell}\langle${$\mathcal{P}$}$_{\alpha}(\eta
)(\ell),\alpha^{\vee}\rangle$ also converges to a positive limit.

\item[(ii)] More generally, let $\alpha_{i_{1}},\cdots,\alpha_{i_{r}},r\geq1,$
be simple roots of $\mathfrak{g}$ and $\eta$ a path in $\Omega$ satisfying
$\displaystyle\lim_{t\rightarrow+\infty}\langle\eta(t),\alpha_{i_{j}}^{\vee
}\rangle=+\infty$ for $1\leq j\leq r$. One gets
\[
\sup_{t\in\lbrack0,+\infty\lbrack}\Vert\mathcal{P}_{\alpha_{i_{1}}}%
\cdots\mathcal{P}_{\alpha_{i_{r}}}(\eta)(t)-\eta(t)\Vert<+\infty.
\]

\end{enumerate}
\end{lemma}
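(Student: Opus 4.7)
The plan is to rely on the explicit formula (\ref{defPalpha}) for $\mathcal{P}_\alpha$, combined with Lemma \ref{lemma_restrict}, to show that $\mathcal{P}_\alpha(\eta)-\eta$ is eventually a constant multiple of $\alpha$; then to propagate this through a composition by induction on $r$.

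For assertion (i), I would first argue that the hypothesis $\frac{1}{\ell}\langle\eta(\ell),\alpha^\vee\rangle \to c>0$ forces $\langle\eta(t),\alpha^\vee\rangle\to+\infty$ as $t\to+\infty$ (passing from integer $\ell$ to real $t$ uses that the increments lie in paths from $B(\pi_\kappa)$, which have common length $L$). Lemma \ref{lemma_restrict} then gives an integer $\ell_0$ such that $\inf_{s\in[0,t]}\langle\eta(s),\alpha^\vee\rangle = \inf_{s\in[0,\ell_0]}\langle\eta(s),\alpha^\vee\rangle$ for every $t\geq\ell_0$; call this stable value $C$. Plugging into (\ref{defPalpha}) yields
\[
\mathcal{P}_\alpha(\eta)(t)-\eta(t) = -\Bigl(\inf_{s\in[0,t]}\langle\eta(s),\alpha^\vee\rangle\Bigr)\alpha,
\]
which equals $-C\alpha$ for $t\geq\ell_0$, and which is continuous hence bounded on the compact interval $[0,\ell_0]$. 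This establishes the supremum bound. For the second part of (i), using $\langle\alpha,\alpha^\vee\rangle=2$, we obtain $\langle\mathcal{P}_\alpha(\eta)(\ell),\alpha^\vee\rangle = \langle\eta(\ell),\alpha^\vee\rangle - 2C$ for $\ell \geq\ell_0$, so the limit $\lim_\ell\frac{1}{\ell}\langle\mathcal{P}_\alpha(\eta)(\ell),\alpha^\vee\rangle = c$ is the same positive constant.

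For assertion (ii), I would proceed by induction on $r$. Observe that the boundedness argument above actually uses only the weaker hypothesis that $\langle\eta(t),\alpha^\vee\rangle\to+\infty$, which is exactly the kind of hypothesis available in (ii). For $r=1$, the claim is thus immediate. For the inductive step, set $\eta' := \mathcal{P}_{\alpha_{i_r}}(\eta)$. The boundedness form of (i) gives a constant $M_r$ with $\|\eta'(t)-\eta(t)\|\leq M_r$ on $[0,+\infty[$; in particular
\[
\langle\eta'(t),\alpha_{i_j}^\vee\rangle = \langle\eta(t),\alpha_{i_j}^\vee\rangle + O(1)\quad\text{for each }j=1,\ldots,r-1,
\]
so $\lim_{t\to+\infty}\langle\eta'(t),\alpha_{i_j}^\vee\rangle=+\infty$ for $j=1,\ldots,r-1$. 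The induction hypothesis, applied to $\eta'$ and the simple roots $\alpha_{i_1},\ldots,\alpha_{i_{r-1}}$, then gives a constant $M'$ with $\|\mathcal{P}_{\alpha_{i_1}}\cdots\mathcal{P}_{\alpha_{i_{r-1}}}(\eta')(t)-\eta'(t)\|\leq M'$ for all $t\geq0$. Combining the two bounds by the triangle inequality yields the desired uniform estimate for $\mathcal{P}_{\alpha_{i_1}}\cdots\mathcal{P}_{\alpha_{i_r}}(\eta)-\eta$.

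The only real subtlety is the compatibility of hypotheses between (i) and (ii): part (ii) assumes only divergence to $+\infty$, not the positive linear rate that (i) states. The resolution is that the boundedness half of (i) genuinely needs only divergence (via Lemma \ref{lemma_restrict}), while the positivity-of-rate half is not needed to drive the induction in (ii); this distinction is the main thing to highlight carefully in the write-up.
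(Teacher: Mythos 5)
Your proposal is correct and follows essentially the same route as the paper: formula (\ref{defPalpha}) combined with Lemma \ref{lemma_restrict} gives that $\mathcal{P}_{\alpha}(\eta)-\eta$ is eventually the constant $-C\alpha$, and (ii) is obtained by peeling off the transforms one at a time by induction, using the uniform bound to propagate the divergence of $\langle\cdot,\alpha_{i_j}^{\vee}\rangle$ to the partially transformed path. If anything, your handling of (ii) is cleaner than the paper's, which phrases the induction as a comparison with $m^{\otimes\infty}$ and thereby implicitly assumes a drift hypothesis not present in the statement, whereas you correctly isolate that only divergence to $+\infty$ is needed for the boundedness half of (i).
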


\begin{proof}
(i) By definition of the transform {$\mathcal{P}$}$_{\alpha}$, we have
$\left\Vert {\mathcal{P}}_{\alpha}(\eta)(t)-\eta(t)\right\Vert =\left\vert
\inf_{t\in\lbrack0,t]}\langle\eta(s),\alpha^{\vee}\rangle\right\vert
\left\Vert \alpha^{\vee}\right\Vert $ for any $t\geq0$. By the previous lemma,
there exists an integer $\ell_{0}$ such that for any $t\geq\ell_{0},$
$\left\Vert {\mathcal{P}}_{\alpha}(\eta)(t)-\eta(t)\right\Vert =\left\vert
\inf_{s\in\lbrack0,t]}\langle\eta(s),\alpha^{\vee}\rangle\right\vert
\left\Vert \alpha^{\vee}\right\Vert =\left\vert \inf_{s\in\lbrack0,\ell_{0}%
]}\langle\eta(s),\alpha^{\vee}\rangle\right\vert \left\Vert \alpha^{\vee
}\right\Vert $.\ Since the infimum $\inf_{s\in\lbrack0,\ell_{0}]}\langle
\eta(s),\alpha^{\vee}\rangle$ does not depend on $\ell$, we are done. Now
$\frac{1}{\ell}\langle${$\mathcal{P}$}$_{\alpha}(\eta(\ell)),\alpha^{\vee
}\rangle$ and $\frac{1}{\ell}\langle\eta(\ell),\alpha^{\vee}\rangle$ admit the
same limit.

(ii) Consider $a\in\{2,\ldots,r-1\}$ and assume by induction that we have%
\begin{equation}
\sup_{t\in\lbrack0,+\infty\lbrack}\left\Vert {\mathcal{P}}_{\alpha_{i_{a}}%
}\cdots{\mathcal{P}}_{\alpha_{i_{r}}}(\eta)(t)-m^{\otimes\infty}(t)\right\Vert
<+\infty. \label{HR}%
\end{equation}
We then deduce
\begin{equation}
\lim_{\ell\rightarrow+\infty}\frac{1}{\ell}\langle{\mathcal{P}}_{\alpha
_{i_{a}}}\cdots{\mathcal{P}}_{\alpha_{i_{r}}}(\eta)(\ell),\alpha_{i_{a-1}%
}^{\vee}\rangle=\langle\overline{m},\alpha_{i_{a-1}}^{\vee}\rangle>0.
\label{TU}%
\end{equation}
This permits to apply Lemma \ref{Lem_rec} with $\eta^{\prime}=${$\mathcal{P}$%
}$_{\alpha_{i_{a}}}\cdots${$\mathcal{P}$}$_{\alpha_{i_{r}}}(\eta)$ and
$\alpha=\alpha_{i_{a-1}}$. We get%
\[
\sup_{t\in\lbrack0,+\infty\lbrack}\left\Vert {\mathcal{P}}_{\alpha_{i_{a-1}}%
}\cdots{\mathcal{P}}_{\alpha_{i_{r}}}(\eta)(t)-{\mathcal{P}}_{\alpha_{i_{a}}%
}\cdots{\mathcal{P}}_{\alpha_{i_{r}}}(\eta)(t)\right\Vert <+\infty\text{.}%
\]
By using (\ref{HR}), this gives%
\begin{equation}
\sup_{t\in\lbrack0,+\infty\lbrack}\left\Vert {\mathcal{P}}_{\alpha_{i_{a-1}}%
}\cdots{\mathcal{P}}_{\alpha_{i_{r}}}(\eta)(t)-m^{\otimes\infty}(t)\right\Vert
<+\infty. \label{HR*}%
\end{equation}
We thus have proved by induction that (\ref{HR*}) holds for any $a=2,\ldots
,r-1$.
\end{proof}

\begin{theorem}
\label{ThLLN P}Let ${\mathcal{W}}$ be a random path defined on $\Omega
=(B(\pi_{\kappa})^{\otimes\mathbb{Z}_{\geq0}},p^{\otimes{\mathbb{Z}_{\geq0}}%
})$ with drift path $m$ and let ${\mathcal{H}}={\mathcal{P}}({\mathcal{W)}}$
be its Pitman transform. Assume $\overline{m}\in\mathcal{D}_{\kappa}$.\ Then,
we have
\[
\lim_{\ell\rightarrow+\infty}\frac{1}{\ell}\sup_{t\in\lbrack0,\ell]}\left\Vert
{\mathcal{H}}(t)-m^{\otimes\infty}(t)\right\Vert =0\text{ almost surely.}%
\]
Furthermore, the family of random variables $\displaystyle\left(
{\frac{{\mathcal{H}}(t)-m^{\otimes\infty}(t)}{\sqrt{t}}}\right)  _{t>0}$
converges in law as $t\rightarrow+\infty$ towards a centered Gaussian law
$\mathcal{N}(0,\Gamma)$.
\end{theorem}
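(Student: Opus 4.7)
The plan is to reduce both statements to the corresponding statements for $\mathcal{W}$ (Theorem \ref{ThLLN}) by showing that $\mathcal{H}(t)$ and $\mathcal{W}(t)$ differ almost surely only by a bounded quantity in $t$. Concretely, using Proposition \ref{Prop-BBO}, we write $\mathcal{H}=\mathcal{P}_{\alpha_{i_{1}}}\cdots\mathcal{P}_{\alpha_{i_{r}}}(\mathcal{W})$ for a reduced decomposition $w_{0}=s_{i_{1}}\cdots s_{i_{r}}$, and then we invoke assertion (ii) of Lemma \ref{Lem_rec}. To apply it, we must check almost sure validity of its hypothesis: for each simple root $\alpha_{i}$, one needs $\langle\mathcal{W}(t),\alpha_{i}^{\vee}\rangle\rightarrow+\infty$ as $t\rightarrow+\infty$.

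First I would verify this hypothesis. By the law of large numbers for $\mathcal{W}$ given by Theorem \ref{ThLLN}, almost surely $\frac{1}{\ell}\mathcal{W}(\ell)\rightarrow\overline{m}$, and using the uniform control $\sup_{t\in[0,\ell]}\|\mathcal{W}(t)-m^{\otimes\infty}(t)\|=o(\ell)$, one deduces $\frac{1}{t}\mathcal{W}(t)\rightarrow\overline{m}$ as $t\rightarrow+\infty$. Since we assume $\overline{m}\in\mathcal{D}_{\kappa}\subset\mathring{\mathcal{C}}$, we have $\langle\overline{m},\alpha_{i}^{\vee}\rangle>0$ for every simple root, hence $\langle\mathcal{W}(t),\alpha_{i}^{\vee}\rangle\rightarrow+\infty$ almost surely for each $i$. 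There are finitely many simple roots, so the intersection of these events still has full probability.

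On this full-probability event, Lemma \ref{Lem_rec}(ii) applies to the trajectory of $\mathcal{W}$ and yields a (random) finite constant $K(\omega)$ with
\[
\sup_{t\geq 0}\|\mathcal{H}(t)-\mathcal{W}(t)\|\leq K(\omega)<+\infty.
\]
From this bound, both desired conclusions follow at once. For the law of large numbers, I would write
\[
\frac{1}{\ell}\sup_{t\in[0,\ell]}\|\mathcal{H}(t)-m^{\otimes\infty}(t)\|\leq\frac{1}{\ell}\sup_{t\in[0,\ell]}\|\mathcal{W}(t)-m^{\otimes\infty}(t)\|+\frac{K(\omega)}{\ell},
\]
and each summand tends to $0$ almost surely by Theorem \ref{ThLLN} and the bound above. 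For the central limit theorem, I would observe that
\[
\frac{\mathcal{H}(t)-m^{\otimes\infty}(t)}{\sqrt{t}}=\frac{\mathcal{W}(t)-m^{\otimes\infty}(t)}{\sqrt{t}}+\frac{\mathcal{H}(t)-\mathcal{W}(t)}{\sqrt{t}},
\]
where the last term is bounded in norm by $K(\omega)/\sqrt{t}\rightarrow 0$ almost surely (hence in probability), while the first term converges in law to $\mathcal{N}(0,\Gamma)$ by Theorem \ref{ThLLN}. An application of Slutsky's theorem then finishes the argument.

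The main obstacle is really the measure-theoretic bookkeeping to confirm that Lemma \ref{Lem_rec} can be applied trajectory-by-trajectory on a set of full probability; the key is that the hypothesis of Lemma \ref{Lem_rec}(ii) involves only finitely many simple roots and each condition holds almost surely thanks to the drift being in the open chamber. Once this is granted, the proof is essentially a coupling reduction to the already-established limit theorems for $\mathcal{W}$, with no additional probabilistic input.
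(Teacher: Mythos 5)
Your proposal is correct and follows essentially the same route as the paper: decompose $\mathcal{P}$ via Proposition \ref{Prop-BBO}, use Theorem \ref{ThLLN} to verify almost surely the hypothesis of Lemma \ref{Lem_rec}(ii) (the paper states this more tersely), deduce $\sup_{t\geq0}\Vert\mathcal{H}(t)-\mathcal{W}(t)\Vert<+\infty$ almost surely, and conclude by the triangle inequality for the law of large numbers and by Slutsky's theorem for the central limit theorem. Your explicit check that the drift condition $\overline{m}\in\mathcal{D}_{\kappa}\subset\mathring{\mathcal{C}}$ yields the required divergence $\langle\mathcal{W}(t),\alpha_{i}^{\vee}\rangle\rightarrow+\infty$ on a set of full probability is a welcome clarification of a step the paper leaves implicit.
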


\begin{proof}
Recall we have {$\mathcal{P}$}$=${$\mathcal{P}$}$_{\alpha_{i_{1}}}\cdots
${$\mathcal{P}$}$_{\alpha_{i_{r}}}$ by Proposition (\cite{BBO}). Consequently,
by Theorem \ref{ThLLN} and Lemma \ref{Lem_rec}, the random variable
$\mathcal{H}-\mathcal{W}=\mathcal{P}(\mathcal{W})-\mathcal{W}$ is finite
almost surely. It follows that
\[
\limsup_{\ell\rightarrow+\infty}{\frac{1}{\ell}}\sup_{t\in\lbrack0,\ell
]}\left\Vert {\mathcal{H}}(t)-m^{\otimes l}(t)\right\Vert \leq\limsup
_{\ell\rightarrow+\infty}{\frac{1}{\ell}}\sup_{t\in\lbrack0,\ell]}\left\Vert
{\mathcal{W}}(t)-m^{\otimes l}(t)\right\Vert +\limsup_{\ell\rightarrow+\infty
}{\frac{1}{\ell}}\sup_{t\geq0}\Vert\mathcal{H}(t)-\mathcal{W}(t)\Vert=0
\]
almost surely. To get the central limit theorem for the process $\mathcal{H}%
(t)$, we write similarly
\[
{\frac{{\mathcal{H}}(t)-m^{\otimes l}(t)}{\sqrt{t}}}={\frac{{\mathcal{W}%
}(t)-m^{\otimes l}(t)}{\sqrt{t}}}+{\frac{{\mathcal{H}}(t)-{\mathcal{W}}%
(t)}{\sqrt{t}}}.
\]
By Theorem \ref{ThLLN}, the first term in this decomposition satisfies the
central limit theorem; on the other hand the second one tends to $0$ almost
surely and one concludes using Slutsky theorem
\end{proof}

\section{Harmonic functions on multiplicative graphs associated to a central
measure}

Harmonic functions on the Young lattice are the key ingredients in the study
of the asymptotic representation theory of the symmetric group. In fact, it
was shown by Kerov and Vershik that these harmonic functions completely
determine the asymptotic characters of the symmetric groups. We refer the
reader to \cite{Ker} for a detailed review. The Young lattice is an oriented
graph with set of vertices the set of all partitions (each partition is
conveniently identified its Young diagram). We have an arrow $\lambda
\rightarrow\Lambda$ between the partitions $\lambda$ and $\Lambda$ when
$\Lambda$ can be obtained by adding a box to $\lambda$. The Young lattice is
an example of branching graph in the sense that its structure reflects the
branching rules between the representations theory of the groups $S_{\ell}$
and $S_{\ell+1}$ with $\ell>0$. One can also consider harmonic functions on
other interesting graphs.

Here we focus on graphs defined from the weight lattice of $\mathfrak{g}$.
These graphs depend on a fixed $\kappa\in P_{+}$ and are multiplicative in the
sense that a positive integer, equal to a tensor product multiplicity, is
associated to each arrow. We call them the multiplicative tensor graphs. We
are going to associate a Markov chain to each multiplicative tensor graph
$\mathcal{G}$. The aim of this section is to determine the harmonic functions
on $\mathcal{G}$ when this associated Markov chain is assumed to have a drift.
We will show this is equivalent to determine the central probability measure
on the subset $\Omega_{\mathcal{C}}$ containing all the trajectories which
remains in $\mathcal{C}$.\ When $\mathfrak{g}=\mathfrak{sl}_{n+1}$ and
$\kappa=\omega_{1}$ (that is $V(\kappa)=\mathbb{C}^{n+1}$ is the defining
representation of $\mathfrak{sl}_{n+1}$), $\mathcal{G}$ is the subgraph of the
Young lattice obtained by considering only the partitions with at most $n+1$
parts and we recover the harmonic functions as specializations of Schur polynomials.

\subsection{Multiplicative tensor graphs, harmonic functions and central
measures}

\label{subsec_graphs}So assume $\kappa\in P_{+}$ is fixed. We denote by
$\mathcal{G}$ the oriented graph with set of vertices the pairs $(\lambda
,\ell)\in P_{+}\times\mathbb{Z}_{\geq0}$ and arrow
\[
(\lambda,\ell)\overset{m_{\lambda,\kappa}^{\Lambda}}{\rightarrow}(\Lambda
,\ell+1)
\]
with multiplicity $m_{\lambda,\kappa}^{\Lambda}$ when $m_{\lambda,\kappa
}^{\Lambda}>0$. In particular there is no arrows between $(\lambda,\ell)$ and
$(\Lambda,\ell+1)$ when $m_{\kappa,\kappa}^{\Lambda}=0$.

\begin{example}
Consider $\mathfrak{g}=\mathfrak{sp}_{2n}.$ Then $P=\mathbb{Z}^{n}$ and
$P_{+}$ can be identified with the set of partitions with at most $n$ parts.
For $\kappa=\omega_{1}$ the graph $\mathcal{G}$ is such that $(\lambda
,\ell)\rightarrow(\Lambda,\ell+1)$ with $m_{\lambda,\kappa}^{\Lambda}=1$ if
and only of the Young diagram of $\Lambda$ is obtained from that of $\lambda$
by adding or deleting one box. We have drawn below the connected component of
$(\,\emptyset,0\,)$ up to $\ell\leq3$.%

\[%
\begin{tabular}
[c]{ccccccc}
&  & $(\,\emptyset,0\,)$ &  &  &  & \\
&  & $\downarrow$ &  &  &  & \\
&  & $\left(  \,%
\begin{tabular}
[c]{|l|}\hline
\\\hline
\end{tabular}
,1\,\right)  $ &  &  &  & \\
& $%
\begin{array}
[c]{c}%
\\
\swarrow
\end{array}
$ & $%
\begin{array}
[c]{c}%
\\
\downarrow
\end{array}
$ & $%
\begin{array}
[c]{c}%
\\
\searrow
\end{array}
$ &  &  & \\
$(\,\emptyset,2\,)$ &  & $\left(  \,%
\begin{tabular}
[c]{|l|l|}\hline
& \\\hline
\end{tabular}
,2\,\right)  $ &  & $\left(  \,%
\begin{tabular}
[c]{|l|}\hline
\\\hline
\\\hline
\end{tabular}
,2\,\right)  $ &  & \\
& $%
\begin{array}
[c]{c}%
\\
\swarrow\searrow
\end{array}
$ & $%
\begin{array}
[c]{c}%
\\
\downarrow
\end{array}
$ & $%
\begin{array}
[c]{c}%
\\
\swarrow\searrow
\end{array}
$ & $%
\begin{array}
[c]{c}%
\\
\downarrow
\end{array}
$ & $%
\begin{array}
[c]{c}%
\\
\searrow
\end{array}
$ & \\
$\left(  \,%
\begin{tabular}
[c]{|l|l|l|}\hline
&  & \\\hline
\end{tabular}
,3\,\right)  $ &  & $\left(  \,%
\begin{tabular}
[c]{|l|}\hline
\\\hline
\end{tabular}
,3\,\right)  $ &  & $\left(  \,%
\begin{tabular}
[c]{|l|l}\cline{1-1}
& \\\hline
& \multicolumn{1}{|l|}{}\\\hline
\end{tabular}
,3\,\right)  $ &  & $\left(  \,%
\begin{tabular}
[c]{|l|}\hline
\\\hline
\\\hline
\\\hline
\end{tabular}
,3\,\right)  $\\
$\vdots$ &  & $\vdots$ &  & $\vdots$ &  & $\vdots$%
\end{tabular}
\ \
\]

\end{example}

Observe that in the case $\mathfrak{g}=\mathfrak{sl}_{n+1}$ and $\kappa
=\omega_{1},$ we have $m_{\lambda,\kappa}^{\Lambda}=1$ if and only if of the
Young diagram of $\Lambda$ is obtained by adding one box to that of $\lambda$
and $m_{\lambda,\kappa}^{\Lambda}=0$ otherwise.\ So in this very particular
case, it is not useful to keep the second component $\ell$ since it is equal
to the rank of the partition $\lambda$. The vertices of $\mathcal{G}$ are
simply the partitions with at most $n$ parts (i.e. whose Young diagram has at
most $n$ rows).

\bigskip

Now return to the general case. Our aim is now to relate the harmonic
functions on $\mathcal{G}$ and the central probability distributions on the
set $\Omega_{\mathcal{C}}$ of infinite trajectories with steps in
$B(\pi_{\kappa})$ which remain in $\mathcal{C}$.\ We will identify the
elements of $P_{+}\times\mathbb{Z}_{\geq0}$ as elements of the $\mathbb{R}%
$-vector space $P_{\mathbb{R}}\times\mathbb{R}$ (recall $P_{\mathbb{R}%
}=\mathbb{R}^{n}$).\ For any $\ell\geq0$, set $H^{\ell}=\{\pi\in B(\pi
_{\kappa})^{\otimes\ell}\mid\operatorname{Im}\pi\subset\mathcal{C}\}$. Also if
$\lambda\in P_{+}$, set $H_{\lambda}^{\ell}=\{\pi\in H^{\ell}\mid
\mathrm{wt}(\pi)=\lambda\}$. Given $\pi\in H^{\ell}$, we denote by
\[
C_{\pi}=\{\omega\in\Omega_{\mathcal{C}}\mid\Pi_{\ell}(\omega)=\pi\}
\]
the cylinder defined by $\pi$.\ We have $C_{\varnothing}=\Omega_{\mathcal{C}}%
$.\ Each probability distribution $\mathbb{Q}$ on $\Omega_{\mathcal{C}}$ is
determined by its values on the cylinders and we must have
\[
\sum_{\pi\in H^{\ell}}\mathbb{Q}(C_{\pi})=1
\]
for any $\ell\geq0$.

\begin{definition}
A central probability distribution on $\Omega_{\mathcal{C}}$ is a probability
distribution $\mathbb{Q}$ on $\Omega_{\mathcal{C}}$ such that $\mathbb{Q}%
(C_{\pi})=\mathbb{Q}(C_{\pi^{\prime}})$ provided that $\mathrm{wt}%
(\pi)=\mathrm{wt}(\pi^{\prime})$ and $\pi,\pi^{\prime}$ \emph{have the same
length}.
\end{definition}

Consider a central probability distribution $\mathbb{Q}$ on $\Omega
_{\mathcal{C}}$. For any $\ell\geq1$, we have $\sum_{\pi\in H^{\ell}%
}\mathbb{Q(}C_{\pi})=1$, so it is possible to define a probability
distribution $q$ on $H^{\ell}$ by setting $q_{\pi}=\mathbb{Q}(C_{\pi})$ for
any $\pi\in H^{\ell}$. Since $\mathbb{Q}$ is central, we can also define the
function
\begin{equation}
\varphi:\left\{
\begin{array}
[c]{c}%
\mathcal{G}\rightarrow\lbrack0,1]\\
(\lambda,\ell)\longmapsto\mathbb{Q}(C_{\pi})
\end{array}
\right.  \label{def_phi}%
\end{equation}
where $\pi$ is any path of $H^{\ell}$. Now observe that%
\[
C_{\pi}=\bigsqcup\limits_{\eta\in B(\pi_{\kappa})\mid\operatorname{Im}%
(\pi\otimes\eta)\subset\mathcal{C}}C_{\pi\otimes\eta}.
\]
This gives
\begin{equation}
\mathbb{Q}(C_{\pi})=\sum_{\eta\in B(\pi_{\kappa})\mid\operatorname{Im}%
(\pi\otimes\eta)\subset\mathcal{C}}\mathbb{Q}(C_{\pi\otimes\eta}).
\label{Q-*_prob_dist}%
\end{equation}
Assume $\pi\in H_{\lambda}^{\ell}$.\ By Theorem \ref{Th_Littel}, the
cardinality of the set $\{\eta\in B(\pi_{\kappa})\mid\operatorname{Im}%
(\pi\otimes\eta)\subset\mathcal{C}$ and $\mathrm{wt}(\pi\otimes\eta
)=\Lambda\}$ is equal to $m_{\lambda,\kappa}^{\Lambda}$ .\ Therefore, we get
\begin{equation}
\varphi(\lambda,\ell)=\sum_{\Lambda}m_{\lambda,\kappa}^{\Lambda}%
\varphi(\Lambda,\ell+1). \label{harm}%
\end{equation}
This means that the function $\varphi$ is \emph{harmonic} on the
multiplicative graph $\mathcal{G}$.

Conversely, if $\varphi^{\prime}$ is harmonic on the multiplicative graph
$\mathcal{G}$, for any cylinder $C_{\pi}$ in $\Omega_{\mathcal{C}}$ with
$\pi\in H_{\lambda}^{\ell}$, we set $\mathbb{Q}^{\prime}(C_{\pi}%
)=\varphi^{\prime}(\lambda,\ell)$. Then $\mathbb{Q}^{\prime}$ is a probability
distribution on $\Omega_{\mathcal{C}}$ since it verifies (\ref{Q-*_prob_dist})
and is clearly central. Therefore, \emph{a central probability distribution on
}$\Omega_{\mathcal{C}}$\emph{ is characterized by its associated harmonic
function} defined by (\ref{def_phi}).

\subsection{Harmonic function on a multiplicative tensor graph}

Let $\mathbb{Q}$ a central probability distribution on $\Omega_{\mathcal{C}}%
$.\ Consider $\pi=\pi_{1}\otimes\cdots\otimes\pi_{\ell}\in H_{\lambda}^{\ell}$
and $\pi^{\#}=\pi_{1}\otimes\cdots\otimes\pi_{\ell}\otimes\pi_{\ell+1}\in
H_{\Lambda}^{\ell+1}$.\ Since we have the inclusion of events $C_{\pi^{\#}%
}\subset C_{\pi}$, we get%
\[
\mathbb{Q}(C_{\pi^{\#}}\mid C_{\pi})=\frac{\mathbb{Q}(C_{\pi^{\#}}%
)}{\mathbb{Q}(C_{\pi})}=\frac{\varphi(\Lambda,\ell+1)}{\varphi(\lambda,\ell)}%
\]
where the last equality is by definition of the harmonic function $\varphi$
(which exists since $\mathbb{Q}$ is central).\ Let us emphasize that
$\mathbb{Q}(C_{\pi^{\#}})$ and $\mathbb{Q}(C_{\pi})$ do not depend on the
paths $\pi$ and $\pi^{\#}$ but only on their lengths and their ends $\lambda$
and $\Lambda$. We then define a Markov chain $Z=(Z_{\ell})_{\ell\geq0}$ from
$(\Omega_{\mathcal{C}},\mathbb{Q})$ with values in $\mathcal{G}$ {and starting
at} $Z_{0}=(\mu,\ell_{0})\in\mathcal{G}$ by
\[
Z_{\ell}(\omega)=(\mu+\omega(\ell),\ell+\ell_{0}).
\]
Its transition probabilities are given by%
\[
\Pi_{Z}((\lambda,\ell),(\Lambda,\ell+1))=\sum_{\pi^{\#}}\mathbb{Q}(C_{\pi
^{\#}}\mid C_{\pi})
\]
where $\pi$ is any path in $H_{\lambda}^{\ell}$ and the sum runs over all the
paths $\pi^{\#}\in H_{\Lambda}^{\ell+1}$ such that $\pi^{\#}=\pi\otimes
\pi_{\ell+1}$.\ Observe, the above sum does not depend on the choice of $\pi$
in $H_{\lambda}^{\ell}$ because $\mathbb{Q}$ is central. Since there are
$m_{\lambda,\kappa}^{\Lambda}$ such pairs, we get%
\begin{equation}
\Pi_{Z}((\lambda,\ell),(\Lambda,\ell+1))=\frac{m_{\lambda,\kappa}^{\Lambda
}\varphi(\Lambda,\ell+1)}{\varphi(\lambda,\ell)}\label{PiZ}%
\end{equation}
and by (\ref{harm}) $Z=(Z_{\ell})_{\ell\geq0}$ is indeed a Markov chain.\ We
then write $\mathbb{Q}_{(\mu,\ell_{0})}(Z_{\ell}=(\lambda,\ell))$ for the
probability that $Z_{\ell}=(\lambda,\ell)$ when the initial value is
$Z_{0}=(\mu,\ell_{0})$. When $Z_{0}=(0,0)$, we simply write $\mathbb{Q}%
(Z_{\ell}=(\lambda,\ell))=\mathbb{Q}_{(0,0)}(Z_{\ell}=(\lambda,\ell))$.

\begin{lemma}
\label{Lem_Qmu}For any $\mu,\lambda\in P_{+}$ and any integer $\ell_{0}\geq1$,
we have
\[
\mathbb{Q}_{(\mu,\ell_{0})}(Z_{\ell-\ell_{0}}=(\lambda,\ell))=f_{\lambda/\mu
}^{(\ell-\ell_{0})}\frac{\varphi(\lambda,\ell)}{\varphi(\mu,\ell_{0})} \text{
for any }\ell\geq\ell_{0}.
\]

\end{lemma}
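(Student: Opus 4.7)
The plan is to expand the probability $\mathbb{Q}_{(\mu,\ell_0)}(Z_{\ell-\ell_0}=(\lambda,\ell))$ as a sum over admissible paths in the multiplicative graph $\mathcal{G}$, exploit the telescoping built into the transition matrix $\Pi_Z$ given by (\ref{PiZ}), and then recognize the remaining combinatorial sum as the tensor multiplicity $f_{\lambda/\mu}^{(\ell-\ell_0)}$.

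More precisely, by the Markov property one writes
\[
\mathbb{Q}_{(\mu,\ell_0)}(Z_{\ell-\ell_0}=(\lambda,\ell))=\sum_{(\mu_0,\mu_1,\ldots,\mu_{\ell-\ell_0})}\prod_{j=0}^{\ell-\ell_0-1}\Pi_Z\bigl((\mu_j,\ell_0+j),(\mu_{j+1},\ell_0+j+1)\bigr),
\]
where the sum runs over finite sequences in $P_+$ with $\mu_0=\mu$ and $\mu_{\ell-\ell_0}=\lambda$. Substituting (\ref{PiZ}) gives a product whose denominators and numerators involving $\varphi$ telescope, leaving
\[
\mathbb{Q}_{(\mu,\ell_0)}(Z_{\ell-\ell_0}=(\lambda,\ell))=\frac{\varphi(\lambda,\ell)}{\varphi(\mu,\ell_0)}\sum_{(\mu_0,\ldots,\mu_{\ell-\ell_0})}\prod_{j=0}^{\ell-\ell_0-1}m_{\mu_j,\kappa}^{\mu_{j+1}}.
\]
The one genuine step left is to identify the combinatorial sum with $f_{\lambda/\mu}^{(\ell-\ell_0)}$. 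This follows by induction on $\ell-\ell_0$ from the definitions (\ref{def_f}) and (\ref{step}): iterating the decomposition $V(\nu)\otimes V(\kappa)\simeq\bigoplus_\Lambda V(\Lambda)^{\oplus m_{\nu,\kappa}^\Lambda}$ inside $V(\mu)\otimes V(\kappa)^{\otimes(\ell-\ell_0)}$ shows precisely that
\[
f_{\lambda/\mu}^{(\ell-\ell_0)}=\sum_{(\mu_0,\ldots,\mu_{\ell-\ell_0})}\prod_{j=0}^{\ell-\ell_0-1}m_{\mu_j,\kappa}^{\mu_{j+1}},
\]
the sum being over the same set of intermediate sequences of dominant weights.

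The main (and only) obstacle is really of a bookkeeping nature: one must be careful that the sums in the Markov expansion and in the iterated Clebsch--Gordan decomposition range over the same index set, and that zero terms (sequences for which some intermediate $m_{\mu_j,\kappa}^{\mu_{j+1}}$ vanishes, i.e.\ edges absent from $\mathcal{G}$) are correctly handled by both sides. Once this is checked, combining the two displays yields the stated formula; the case $\ell_0=0$ and $\mu=0$ then specializes to $\mathbb{Q}(Z_\ell=(\lambda,\ell))=f_\lambda^{\ell}\varphi(\lambda,\ell)/\varphi(0,0)$, which is the form one expects in view of the Kerov--Vershik framework.
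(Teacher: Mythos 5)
Your proof is correct and follows essentially the same route as the paper: expand $\mathbb{Q}_{(\mu,\ell_0)}(Z_{\ell-\ell_0}=(\lambda,\ell))$ via the Markov property, telescope the $\varphi$-factors in $\Pi_Z$, and identify the remaining sum of products of multiplicities with $f_{\lambda/\mu}^{(\ell-\ell_0)}$. The only cosmetic difference is that you justify this last identity by iterating the decomposition (\ref{step}) inside (\ref{def_f}), whereas the paper counts the weighted paths in $\mathcal{C}$ via Theorem \ref{Th_Littel}; the two justifications are equivalent.
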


\begin{proof}
By (\ref{PiZ}), the probability $\mathbb{Q}_{(\mu,\ell_{0})}(Z_{\ell-\ell_{0}%
}=(\lambda,\ell))$ is equal to the quotient $\frac{\varphi(\lambda,\ell
)}{\varphi(\mu,\ell_{0})}$ times the number of paths in $\mathcal{C}$ of
length $\ell-\ell_{0}$ starting at $\mu$ and ending at $\lambda$.\ The lemma
thus follows from the fact that this number is equal to $f_{\lambda/\mu
}^{(\ell-\ell_{0})}$ by Theorem \ref{Th_Littel}.
\end{proof}

\bigskip

We will say that the family of Markov chains $Z$ with transition probabilities
given by (\ref{PiZ}) and initial distributions of the form $Z_{0}=(\mu
,\ell_{0})\in\mathcal{G}$ admits a drift $\overline{m}\in P_{\mathbb{R}}$ when%
\[
\lim_{\ell\rightarrow+\infty}\frac{Z_{\ell}}{\ell}=(\overline{m},1)\text{
}\mathbb{Q}\text{-almost surely}%
\]
for any initial distributions $Z_{0}=(\mu,\ell_{0})\in\mathcal{G}$.

\begin{theorem}
\label{Th_central G}Let $\mathbb{Q}$ be a central probability distribution on
$\Omega_{\mathcal{C}}$ such that $Z$ admits the drift $\overline{m}%
\in\mathcal{D}_{\kappa}$ (see (\ref{DefDkappa})).

\begin{enumerate}
\item[(i)] The associated harmonic function $\varphi$ on $\mathcal{G}$
verifies $\varphi(\mu,\ell_{0})=\frac{\tau^{-\mu}S_{\mu}(\tau)}{\tau
^{-\ell_{0}\kappa}S_{\kappa}^{\ell_{0}}(\tau)}$for any $\mu\in P_{+}$ and any
$\ell_{0}\geq0$ where $\tau$ is determined by $\overline{m}$ as prescribed by
Proposition \ref{Prop_F_assymp}.

\item[(ii)] The probability transitions (\ref{PiZ}) do not depend on $\ell$.

\item[(iii)] For any $\pi\in H_{\mu}^{\ell_{0}},$ we have $\mathbb{Q}(C_{\pi
})=\frac{\tau^{-\mu}S_{\mu}(\tau)}{\tau^{-\ell_{0}\kappa}S_{\kappa}^{\ell_{0}%
}(\tau)}$. In particular, $\mathbb{Q}$ is the unique central probability
distribution on $\Omega_{\mathcal{C}}$ such that $Z$ admits the drift
$\overline{m}$. We will denote it by $\mathbb{Q}_{\overline{m}}$.
\end{enumerate}
\end{theorem}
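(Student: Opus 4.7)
My plan is to exploit two instances of Lemma~\ref{Lem_Qmu}, started respectively at $(0,0)$ and at $(\mu,\ell_{0})$, and then to use the law of large numbers for $Z$ together with the asymptotic multiplicity estimates of Theorem~\ref{Th_f_assymp} to isolate $\varphi(\mu,\ell_{0})$ as a limit of ratios of tensor multiplicities.

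First, observe that $\varphi(0,0)=\mathbb{Q}(C_{\varnothing})=1$, so Lemma~\ref{Lem_Qmu} applied with initial data $(0,0)$ yields
$$\mathbb{Q}\bigl(Z_{\ell}=(\lambda,\ell)\bigr)=f_{\lambda}^{\ell}\,\varphi(\lambda,\ell),$$
while the same lemma applied with initial data $(\mu,\ell_{0})$ gives
$$\mathbb{Q}_{(\mu,\ell_{0})}\bigl(Z_{\ell-\ell_{0}}=(\lambda,\ell)\bigr)=f_{\lambda/\mu}^{\ell-\ell_{0}}\,\frac{\varphi(\lambda,\ell)}{\varphi(\mu,\ell_{0})}.$$
Fix $\varepsilon>0$ and set $A_{\ell}^{\varepsilon}:=\{\lambda\in P_{+}:\|\lambda/\ell-\overline{m}\|<\varepsilon\}$. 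Since by assumption $Z$ admits the drift $\overline{m}$ for every initial distribution in $\mathcal{G}$, the almost sure convergence $Z_{\ell}/\ell\to(\overline{m},1)$ implies
$$\sum_{\lambda\in A_{\ell}^{\varepsilon}}f_{\lambda}^{\ell}\,\varphi(\lambda,\ell)\longrightarrow 1,\qquad \frac{1}{\varphi(\mu,\ell_{0})}\sum_{\lambda\in A_{\ell}^{\varepsilon}}f_{\lambda/\mu}^{\ell-\ell_{0}}\,\varphi(\lambda,\ell)\longrightarrow 1,$$
so the quotient of the two sums converges to $\varphi(\mu,\ell_{0})$.

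The next step is to pull the ratio $f_{\lambda/\mu}^{\ell-\ell_{0}}/f_{\lambda}^{\ell}$ outside the numerator. Combining Theorem~\ref{Th_f_assymp}(ii), used with $\ell$ replaced by $\ell-\ell_{0}$, with the corollary that follows it, one gets
$$\frac{f_{\lambda^{(\ell)}/\mu}^{\ell-\ell_{0}}}{f_{\lambda^{(\ell)}}^{\ell}}=\frac{f_{\lambda^{(\ell)}/\mu}^{\ell-\ell_{0}}}{f_{\lambda^{(\ell)}}^{\ell-\ell_{0}}}\cdot\frac{f_{\lambda^{(\ell)}}^{\ell-\ell_{0}}}{f_{\lambda^{(\ell)}}^{\ell}}\longrightarrow\tau^{-\mu}S_{\mu}(\tau)\cdot\frac{1}{\tau^{-\ell_{0}\kappa}\,S_{\kappa}^{\ell_{0}}(\tau)}=:c$$
for every sequence $\lambda^{(\ell)}=\ell\overline{m}+o(\ell)$. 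A diagonal extraction upgrades this pointwise convergence to uniform convergence over $\lambda\in A_{\ell}^{\varepsilon}$ in the joint limit $\varepsilon\downarrow 0$ and $\ell\to+\infty$: any sequence $\lambda^{(\ell)}\in A_{\ell}^{\varepsilon_{\ell}}$ with $\varepsilon_{\ell}\downarrow 0$ automatically satisfies $\lambda^{(\ell)}=\ell\overline{m}+o(\ell)$, so a violation of uniformity would contradict the theorem. Applying this uniform estimate to the numerator, the quotient of sums equals $c\cdot(1+o(1))$, and we conclude
$$\varphi(\mu,\ell_{0})=c=\frac{\tau^{-\mu}S_{\mu}(\tau)}{\tau^{-\ell_{0}\kappa}\,S_{\kappa}^{\ell_{0}}(\tau)},$$
which is assertion (i).

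Assertion (ii) then follows by substituting this formula for $\varphi$ into (\ref{PiZ}): the factors $\tau^{\ell\kappa}$ and $S_{\kappa}^{\ell}(\tau)$ cancel between $\varphi(\Lambda,\ell+1)$ and $\varphi(\lambda,\ell)$, leaving $\Pi_{Z}((\lambda,\ell),(\Lambda,\ell+1))=m_{\lambda,\kappa}^{\Lambda}\,\tau^{\kappa+\lambda-\Lambda}\,S_{\Lambda}(\tau)/(S_{\lambda}(\tau)\,S_{\kappa}(\tau))$, which no longer involves $\ell$. Assertion (iii) is essentially tautological: for $\pi\in H_{\mu}^{\ell_{0}}$, the equality $\mathbb{Q}(C_{\pi})=\varphi(\mu,\ell_{0})$ is the very definition (\ref{def_phi}) of the harmonic function, so (i) supplies the announced value. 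Uniqueness is immediate since $\overline{m}$ determines $\tau$ by Proposition~\ref{Prop_F_assymp}, hence $\varphi$, hence $\mathbb{Q}$ on every cylinder of $\Omega_{\mathcal{C}}$. The main technical hurdle I anticipate is precisely the uniformity argument invoked in step two of (i), since Theorem~\ref{Th_f_assymp} is phrased along fixed sequences; everything else is bookkeeping around Lemma~\ref{Lem_Qmu}, the LLN for $Z$, and Proposition~\ref{Prop_F_assymp}.
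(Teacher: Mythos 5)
Your proof is correct and rests on the same pillars as the paper's: Lemma \ref{Lem_Qmu}, the drift hypothesis on $Z$, Theorem \ref{Th_f_assymp} together with its corollary, and Proposition \ref{Prop_F_assymp} for uniqueness; parts (ii) and (iii) are handled identically in both arguments. The difference is in how the limit identifying $\varphi(\mu,\ell_{0})$ is extracted. The paper fixes a single sequence $\lambda^{(\ell)}=\ell\overline{m}+o(\ell)$ and asserts that the ratio of the two point probabilities $\mathbb{Q}_{(\mu,\ell_{0})}(Z_{\ell-\ell_{0}}=(\lambda^{(\ell)},\ell))/\mathbb{Q}(Z_{\ell}=(\lambda^{(\ell)},\ell))$ tends to $\tfrac{1}{1}=1$ because both rescaled chains converge to $(\overline{m},1)$ --- a step which, read literally, is heuristic, since almost sure convergence of $Z_{\ell}/\ell$ gives no direct control on ratios of individual point masses. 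Your version instead sums over the window $A_{\ell}^{\varepsilon}$, where the law of large numbers genuinely controls the total mass, and pays for this with the uniformity upgrade of Theorem \ref{Th_f_assymp} over $\lambda\in A_{\ell}^{\varepsilon_{\ell}}$, which your subsequence-extraction argument correctly supplies. Two small points of care that you should make explicit: the sequence $\varepsilon_{\ell}\downarrow 0$ must be chosen slowly enough that both LLN statements (for the chain started at $(0,0)$ and at $(\mu,\ell_{0})$) still give total mass tending to $1$ on the shrinking window, and the manipulation $f_{\lambda/\mu}^{\ell-\ell_{0}}=\bigl(f_{\lambda/\mu}^{\ell-\ell_{0}}/f_{\lambda}^{\ell}\bigr)f_{\lambda}^{\ell}$ requires discarding the $\lambda$ with $f_{\lambda}^{\ell}=0$ (harmless, as they carry no mass in the denominator sum and, for large $\ell$, negligible mass in the numerator). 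With these provisos your route is, if anything, the more rigorous execution of the same idea.
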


\begin{proof}
(i). Consider a sequence of random dominant weights of the form $\lambda
^{(\ell)}=\ell\overline{m}+o(\ell)$.\ We get by using Lemma \ref{Lem_Qmu}%
\begin{multline*}
\frac{f_{\lambda^{(\ell)}/\mu}^{(\ell-\ell_{0})}}{f_{\lambda^{(\ell)}}%
^{(\ell)}}\times\frac{1}{\varphi(\mu,\ell_{0})}=f_{\lambda^{(\ell)}/\mu
}^{(\ell-\ell_{0})}\times\frac{\varphi(\lambda^{(\ell)},\ell)}{\varphi
(\mu,\ell_{0})}\times\lbrack f_{\lambda^{(\ell)}}^{(\ell)}\times
\varphi(\lambda^{(\ell)},\ell)]^{-1}\\
=\frac{\mathbb{Q}_{(\mu,\ell_{0})}(Z_{\ell-\ell_{0}}=(\lambda^{(\ell)},\ell
))}{\mathbb{Q}(Z_{\ell}=(\lambda^{(\ell)},\ell))}=\frac{\mathbb{Q}_{(\mu
,\ell_{0})}(\frac{Z_{\ell-\ell_{0}}}{\ell-\ell_{0}}=(\frac{\lambda^{(\ell)}%
}{\ell-\ell_{0}},\frac{\ell}{\ell-\ell_{0}}))}{\mathbb{Q}(\frac{Z_{\ell}}%
{\ell}=(\frac{\lambda^{(\ell)}}{\ell},1))}.
\end{multline*}
Since $Z$ admits the drift $\overline{m}$, we obtain
\[
\lim_{\ell\rightarrow+\infty}\frac{\mathbb{Q}_{(\mu,\ell_{0})}(\frac
{Z_{\ell-\ell_{0}}}{\ell-\ell_{0}}=(\frac{\lambda^{(\ell)}}{\ell-\ell_{0}%
},\frac{\ell}{\ell-\ell_{0}}))}{\mathbb{Q}(\frac{Z_{\ell}}{\ell}%
=(\frac{\lambda^{(\ell)}}{\ell},1))}=\frac{1}{1}=1\text{ and }\lim
_{\ell\rightarrow+\infty}\frac{f_{\lambda^{(\ell)}/\mu}^{(\ell-\ell_{0})}%
}{f_{\lambda^{(\ell)}}^{(\ell)}}\times\frac{1}{\varphi(\mu,\ell_{0}%
)}=1\text{.}%
\]
This means that
\[
\varphi(\mu,\ell_{0})=\lim_{\ell\rightarrow+\infty}\frac{f_{\lambda^{(\ell
)}/\mu}^{(\ell-\ell_{0})}}{f_{\lambda^{(\ell)}}^{(\ell)}}.
\]
Now by Theorem \ref{Th_f_assymp} and since $\overline{m}\in\mathcal{D}%
_{\kappa}$ we can write
\[
\lim_{\ell\rightarrow+\infty}\frac{f_{\lambda^{(\ell)}/\mu}^{(\ell-\ell_{0})}%
}{f_{\lambda^{(\ell)}}^{(\ell)}}=\lim_{\ell\rightarrow+\infty}\frac
{f_{\lambda^{(\ell)}/\mu}^{(\ell-\ell_{0})}}{f_{\lambda^{(\ell)}}^{(\ell
-\ell_{0})}}\times\lim_{\ell\rightarrow+\infty}\frac{f_{\lambda^{(\ell)}%
}^{(\ell-\ell_{0})}}{f_{\lambda^{(\ell)}}^{(\ell)}}=\frac{\tau^{-\mu}S_{\mu
}(\tau)}{\tau^{-\ell_{0}\kappa}S_{\kappa}^{\ell_{0}}(\tau)}%
\]
where $\tau\in]0,1[^{n}$ is determined by the drift $\overline{m}$ as
prescribed by Proposition \ref{Prop_F_assymp}. We thus obtain $\varphi
(\mu,\ell_{0})=\frac{\tau^{-\mu}S_{\mu}(\tau)}{\tau^{-\ell_{0}\kappa}%
S_{\kappa}^{\ell_{0}}(\tau)}$.

(ii). We have $\Pi_{Z}((\lambda,\ell),(\Lambda,\ell+1))=\frac{m_{\lambda
,\kappa}^{\Lambda}\varphi(\Lambda,\ell+1)}{\varphi(\lambda,\ell)}%
=\frac{S_{\Lambda}(\tau)}{S_{\kappa}(\tau)S_{\lambda}(\tau)}\tau
^{\kappa+\lambda-\Lambda}m_{\lambda,\kappa}^{\Lambda}$ which does not depend
on $\ell$.

(iii). This follows from the fact that $\mathbb{Q}(C_{\pi})=\varphi
(\lambda,\ell)$ for any $\pi\in H_{\lambda}^{\ell}$.
\end{proof}

\bigskip

Consider $\overline{m}\in\mathcal{D}_{\kappa}$ and write $\tau$ for the
corresponding $n$-tuple in $]0,1[^{n}$. Let $W$ be the random walk starting at
$0$ defined on $P$ from $\kappa$ and $\tau$ as in \S \ \ref{Subsec-RPW}.

\begin{corollary}
Let $\mathbb{Q}$ be a central probability distribution on $\Omega
_{\mathcal{C}}$ such that $Z$ admits the drift $\overline{m}\in\mathcal{D}%
_{\kappa}$.\ {Then, the processes $(Z_{\ell})_{\ell}$ and $(({\mathcal{P}%
}(W_{\ell}),\ell))_{\ell}$ have the same law.}
\end{corollary}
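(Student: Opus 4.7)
The plan is to verify that $(Z_\ell)_{\ell\geq 0}$ and $((\mathcal{P}(W_\ell),\ell))_{\ell\geq 0}$ are both time-homogeneous Markov chains on $\mathcal{G}$ with the same initial distribution and the same one-step transition kernel; this characterizes the law completely.

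First, I would record that both processes are Markov chains on $\mathcal{G}=P_+\times\mathbb{Z}_{\geq 0}$. By construction $Z$ is Markov on $\mathcal{G}$, and by Theorem \ref{Th_LawH}(i) the sequence $H=(H_\ell)=(\mathcal{P}(W_\ell))$ is a Markov chain on $P_+$; attaching the deterministic counter $\ell$ in the second coordinate yields a Markov chain $((H_\ell,\ell))_{\ell}$ on $\mathcal{G}$ with initial value $(0,0)$, which matches $Z_0=(0,0)$ (the implicit initial condition under $\mathbb{Q}=\mathbb{Q}_{(0,0)}$, as used in the preceding lemma).

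Second, I would compare the two one-step transition kernels. For $H$, Theorem \ref{Th_LawH}(i) gives
\[
\mathbb{P}(H_{\ell+1}=\Lambda\mid H_\ell=\lambda)=\frac{S_{\Lambda}(\tau)}{S_{\kappa}(\tau)S_{\lambda}(\tau)}\,\tau^{\kappa+\lambda-\Lambda}\,m_{\lambda,\kappa}^{\Lambda}.
\]
For $Z$, part (ii) of Theorem \ref{Th_central G} (obtained by substituting the explicit expression $\varphi(\mu,\ell_0)=\tau^{-\mu}S_\mu(\tau)/(\tau^{-\ell_0\kappa}S_\kappa^{\ell_0}(\tau))$ from part (i) into formula (\ref{PiZ})) yields
\[
\Pi_Z((\lambda,\ell),(\Lambda,\ell+1))=\frac{S_{\Lambda}(\tau)}{S_{\kappa}(\tau)S_{\lambda}(\tau)}\,\tau^{\kappa+\lambda-\Lambda}\,m_{\lambda,\kappa}^{\Lambda},
\]
independent of $\ell$. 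Thus the first-coordinate transition probabilities coincide, and the second coordinate is the deterministic shift $\ell\mapsto\ell+1$ in both cases.

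Finally, since the two Markov chains have matching initial distributions (both concentrated at $(0,0)$) and matching transition kernels, their finite-dimensional distributions agree by the standard Markov chain argument, so they have the same law on $\mathcal{G}^{\mathbb{Z}_{\geq 0}}$. The main (and only real) obstacle is the algebraic simplification in step two: one must feed the explicit form of $\varphi$ coming from Theorem \ref{Th_central G}(i) into (\ref{PiZ}) to see that all factors involving the time $\ell$ cancel, leaving exactly the expression (\ref{MatrixH}) from Theorem \ref{Th_LawH}. Once this bookkeeping is done, the identification of laws is immediate.
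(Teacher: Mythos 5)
Your proposal is correct and follows essentially the same route as the paper: both arguments substitute the explicit harmonic function from Theorem \ref{Th_central G} into (\ref{PiZ}) and observe that the resulting transition matrix coincides with (\ref{MatrixH}) from Theorem \ref{Th_LawH}. Your additional remarks on matching the initial distributions at $(0,0)$ and invoking the standard identification of laws of Markov chains with equal kernels are just the details the paper leaves implicit.
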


\begin{proof}
By the previous theorem, the transitions of the Markov chain $Z$ on
$\mathcal{G}$ are given by $\Pi_{Z}((\lambda,\ell),(\Lambda,\ell
+1))=\frac{m_{\lambda,\kappa}^{\Lambda}\varphi(\Lambda,\ell+1)}{\varphi
(\lambda,\ell)}$. By Theorem \ref{Th_LawH}, the transition matrix $\Pi_{Z}$
thus coincides with the transition matrix of {$\mathcal{P}$}$(W)$ as desired.
\end{proof}

\bigskip

Let $\mathbb{P}_{\overline{m}}$ and $\mathbb{Q}_{\overline{m}}$ be the
probability distributions associated to $\overline{m}$ (recall $\overline{m}$
determines $\tau\in]0,1[^{n}$) defined on the spaces $\Omega$ and
$\Omega_{\mathcal{C}}$, respectively.

\begin{corollary}
\label{Cor-Iso-ProbaSpace}The Pitman transform $\mathcal{P}$ is a homomorphism
of probability spaces between $(\Omega,\mathbb{P}_{\overline{m}})$ and
$(\Omega_{\mathcal{C}},\mathbb{Q}_{\overline{m}})$, that is we have
\[
\mathbb{Q}_{\overline{m}}\mathbb{(}C_{\pi})=\mathbb{P}_{\overline{m}%
}(\mathcal{P}^{-1}(C_{\pi}))
\]
for any $\ell\geq1$ and any $\pi\in H^{\ell}$.
\end{corollary}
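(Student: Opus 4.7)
The plan is to compute both sides of the claimed equality and show they agree. The right-hand side is already explicit: by assertion (iii) of Theorem \ref{Th_central G}, for $\pi\in H_{\lambda}^{\ell}$ with $\lambda=\mathrm{wt}(\pi)$,
\[
\mathbb{Q}_{\overline{m}}(C_{\pi})=\frac{\tau^{-\lambda}S_{\lambda}(\tau)}{\tau^{-\ell\kappa}S_{\kappa}^{\ell}(\tau)}=\frac{\tau^{\ell\kappa-\lambda}S_{\lambda}(\tau)}{S_{\kappa}(\tau)^{\ell}}.
\]
So the task reduces to showing that $\mathbb{P}_{\overline{m}}(\mathcal{P}^{-1}(C_{\pi}))$ equals this same quantity.

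The key structural step I would establish first is a \emph{causality/commutation} statement: the infinite Pitman transform commutes with the truncation $\Pi_{\ell}$. Indeed, by Proposition \ref{Prop-BBO}, $\mathcal{P}=\mathcal{P}_{\alpha_{i_{1}}}\cdots\mathcal{P}_{\alpha_{i_{r}}}$, and formula (\ref{defPalpha}) shows that each elementary transform $\mathcal{P}_{\alpha}(\eta)(t)$ depends only on the values of $\eta$ on $[0,t]$. It follows that $\Pi_{\ell}\circ\mathcal{P}_{\alpha}=\mathcal{P}_{\alpha}\circ\Pi_{\ell}$ for every simple root $\alpha$, and composing gives
\[
\Pi_{\ell}\circ\mathcal{P}=\mathcal{P}_{\ell}\circ\Pi_{\ell},
\]
where on the right $\mathcal{P}_{\ell}$ denotes the finite Pitman transform on $B(\pi_{\kappa})^{\otimes\ell}$. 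Consequently,
\[
\mathcal{P}^{-1}(C_{\pi})=\{\omega\in\Omega\mid\mathcal{P}_{\ell}(\Pi_{\ell}(\omega))=\pi\}=\Pi_{\ell}^{-1}\bigl(\mathcal{P}_{\ell}^{-1}(\pi)\bigr),
\]
and $\mathcal{P}_{\ell}^{-1}(\pi)$ is exactly the connected component $B(\pi)\subset B(\pi_{\kappa})^{\otimes\ell}$ of the highest weight path $\pi$, since $\mathcal{P}_{\ell}$ sends each vertex of the crystal graph to the unique highest weight vertex of its component.

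Then I would just sum the product probabilities. By the product-measure formula (\ref{potimesell}),
\[
\mathbb{P}_{\overline{m}}\bigl(\mathcal{P}^{-1}(C_{\pi})\bigr)=\sum_{b\in B(\pi)}p^{\otimes\ell}(b)=\frac{\tau^{\ell\kappa-\lambda}}{S_{\kappa}(\tau)^{\ell}}\sum_{b\in B(\pi)}\tau^{\lambda-\mathrm{wt}(b)}.
\]
By assertion (iv) of Theorem \ref{Th_Littel}, $B(\pi)$ is isomorphic to the standard crystal $B(\eta_{\lambda})$ as a weighted oriented graph, and from (\ref{slambda}) together with (\ref{defS}) one has $\sum_{b\in B(\eta_{\lambda})}\tau^{\lambda-\mathrm{wt}(b)}=S_{\lambda}(\tau)$ (this is exactly the specialization $e^{-\alpha_{i}}=\tau_{i}$ of $e^{-\lambda}s_{\lambda}$). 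Substituting gives $\mathbb{P}_{\overline{m}}(\mathcal{P}^{-1}(C_{\pi}))=\tau^{\ell\kappa-\lambda}S_{\lambda}(\tau)/S_{\kappa}(\tau)^{\ell}$, which matches $\mathbb{Q}_{\overline{m}}(C_{\pi})$.

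The only step requiring care is the commutation $\Pi_{\ell}\circ\mathcal{P}=\mathcal{P}_{\ell}\circ\Pi_{\ell}$, which I expect to be the main conceptual point even though it follows immediately from the causal form of (\ref{defPalpha}); one should also note that $\mathcal{P}(\omega)$ indeed lies in $\Omega_{\mathcal{C}}$ since each $\Pi_{\ell}(\mathcal{P}(\omega))$ is a highest weight path and therefore has image in $\mathcal{C}$ by assertion (iii) of Proposition \ref{Prop_HP}. All subsequent calculations are routine applications of the product-measure structure and the Littelmann character identity, and the equality on cylinders is exactly the asserted homomorphism property.
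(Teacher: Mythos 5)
Your proposal is correct and follows essentially the same route as the paper: identify $\mathcal{P}^{-1}(C_{\pi})$ with $\Pi_{\ell}^{-1}(B(\pi))$, the set of trajectories whose length-$\ell$ truncation lies in the connected component of the highest weight path $\pi$, and then compute $p^{\otimes\ell}(B(\pi))=\tau^{\ell\kappa-\lambda}S_{\lambda}(\tau)/S_{\kappa}(\tau)^{\ell}$. The only differences are cosmetic: you make explicit the causality argument showing $\mathcal{P}$ commutes with truncation (which the paper treats as immediate from the definition), and you re-derive the value of $p^{\otimes\ell}(B(\pi))$ from the product-measure formula and the character identity rather than citing assertion (ii) of Theorem \ref{Th_LawH}, which states exactly that formula.
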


\begin{proof}
Assume $\pi\in H_{\lambda}^{\ell}$.$\ $We have $\mathbb{Q}_{\overline{m}%
}\mathbb{(}C_{\pi})=\varphi(\lambda,\ell)=\frac{\tau^{-\lambda}S_{\lambda
}(\tau)}{\tau^{-\ell\kappa}S_{\kappa}^{\ell}(\tau)}$.\ By definition of the
generalized Pitman transform $\mathcal{P}$, $\mathcal{P}^{-1}(C_{\pi
})=\{\omega\in\Omega\mid\mathcal{P}(\Pi_{\ell}(\omega))=\pi\}$, that is
$\mathcal{P}^{-1}(C_{\pi})$ is the set of all trajectories in $\Omega$ which
remains in the connected component $B(\pi)\subset B(\pi_{\kappa})^{\otimes
\ell}$ for any $t\in\lbrack0,\ell]$.\ We thus have $\mathbb{P}_{\overline{m}%
}(\mathcal{P}^{-1}(C_{\pi}))=p^{\otimes\ell}(B(\pi))=\frac{\tau^{-\lambda
}S_{\lambda}(\tau)}{\tau^{-\ell\kappa}S_{\kappa}^{\ell}(\tau)}$ by assertion
(ii) of Theorem \ref{Th_LawH}. Therefore we get $\mathbb{P}_{\overline{m}%
}(\mathcal{P}^{-1}(C_{\pi}))=\mathbb{Q}_{\overline{m}}\mathbb{(}C_{\pi})$ as desired.
\end{proof}

\section{Isomorphism of dynamical systems}

\label{Sec_Conse}In this section, we first explain how the trajectories in
$\Omega$ and $\Omega_{\mathcal{C}}$ can be equipped with natural shifts $S$
and $J$, respectively. We then prove that the generalized Pitman transform
$\mathcal{P}$ intertwines $S$ and $J$. When $\mathfrak{g}=\mathfrak{sl}_{n+1}$
and $\kappa=\omega_{1}$, we recover in particular some analogue results of
\cite{Snia}.

\subsection{The shift operator}

{Let $S:\Omega\rightarrow\Omega$ be the shift operator on $\Omega$ defined
by}
\[
S(\pi)=S(\pi_{1}\otimes\pi_{2}\otimes\pi_{3}\otimes\cdots):=(\pi_{2}\otimes
\pi_{3}\otimes\ldots)
\]
for any trajectory $\pi=\pi_{1}\otimes\pi_{2}\otimes\pi_{3}\otimes\cdots
\in\Omega$.\ Observe that $S$ is measure preserving for the probability
distribution $\mathbb{P}_{\overline{m}}$.\ We now introduce the map
$J:\Omega_{\mathcal{C}}\rightarrow\Omega_{\mathcal{C}}$ defined by
\[
J(\pi)=\mathcal{P}\circ S(\pi)
\]
for any trajectory $\pi\in\Omega_{\mathcal{C}}$. Observe that $S(\pi)$ does
not belong to $\Omega_{\mathcal{C}}$ in general so we need to apply the Pitman
transform $\mathcal{P}$ to ensure that $J$ takes values in $\Omega
_{\mathcal{C}}$.

\subsection{Isomorphism of dynamical systems}

\begin{theorem}
\ \ \label{Th_PSJ}

\begin{enumerate}
\item[(i)] The Pitman transform is a factor map of dynamical systems, i.e. the
following diagram commutes :%
\[%
\begin{array}
[c]{ccc}%
\Omega & \overset{S}{\rightarrow} & \Omega\\
\mathcal{P}\downarrow\text{ \ } &  & \text{ \ }\downarrow\mathcal{P}\\
\Omega_{\mathcal{C}} & \underset{J}{\rightarrow} & \Omega_{\mathcal{C}}%
\end{array}
\]

\item[(ii)] {For any $\overline{m}\in\mathcal{D}_{\kappa}$, the transformation
$J:\Omega_{\mathcal{C}}\rightarrow\Omega_{\mathcal{C}}$ is measure preserving
with respect to the (unique) central probability distribution $\mathbb{Q}%
_{\overline{m}}$ with drift $\overline{m}$.}
\end{enumerate}
\end{theorem}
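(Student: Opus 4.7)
The plan is to first reduce (i) to a finite, combinatorial assertion. Expanding $J=\mathcal{P}\circ S$ on $\Omega_{\mathcal{C}}$, the diagram commutes iff $\mathcal{P}\circ S=\mathcal{P}\circ S\circ\mathcal{P}$ as maps $\Omega\rightarrow\Omega_{\mathcal{C}}$. Since the formula (\ref{defPalpha}) for each $\mathcal{P}_{\alpha}$ is causal in $t$ (it only involves $\pi$ restricted to $[0,t]$), the composition $\mathcal{P}$ commutes with the truncation $\Pi_{\ell}$; together with the obvious identity $\Pi_{\ell}\circ S=S\circ\Pi_{\ell+1}$, it suffices to show that for every $\pi=\pi_{1}\otimes\cdots\otimes\pi_{\ell+1}\in B(\pi_{\kappa})^{\otimes(\ell+1)}$, the two tails $S(\pi)=\pi_{2}\otimes\cdots\otimes\pi_{\ell+1}$ and $S(\mathcal{P}(\pi))$ lie in the \emph{same} connected component of $B(\pi_{\kappa})^{\otimes\ell}$. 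Once established, taking the (unique) highest weight vertex of that component yields $\mathcal{P}(S(\pi))=\mathcal{P}(S(\mathcal{P}(\pi)))$ as required.

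The main step, and the only combinatorial input needed, is a tail-stability lemma: the map $\pi_{1}\otimes\tau\mapsto\tau$ sends each connected component of $B(\pi_{\kappa})^{\otimes(\ell+1)}$ into a single connected component of $B(\pi_{\kappa})^{\otimes\ell}$. By transitivity along crystal edges it is enough to compare $\pi$ with $\tilde{f}_{i}(\pi)$ for one simple root index $i$. The tensor product rule of Proposition~\ref{Prop_HP}(ii) gives exactly two cases: either $\tilde{f}_{i}$ acts on the first factor $\pi_{1}$, leaving the tail unchanged, or it acts on the tail, replacing it by $\tilde{f}_{i}(\tau)$, which lies in the same component of $B(\pi_{\kappa})^{\otimes\ell}$ as $\tau$. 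Since $\mathcal{P}(\pi)\in B(\pi)$ by definition of the generalized Pitman transform, the lemma applies to the pair $(\pi,\mathcal{P}(\pi))$ and closes part (i). This tail-stability lemma is the only nonformal piece of the proof, and I expect it to be the main obstacle; everything else reduces to bookkeeping with the crystal tensor product rule.

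For part (ii), the idea is to push forward $\mathbb{P}_{\overline{m}}$ through the diagram. Corollary~\ref{Cor-Iso-ProbaSpace} gives $\mathbb{Q}_{\overline{m}}=\mathcal{P}_{*}\mathbb{P}_{\overline{m}}$, and the product measure $\mathbb{P}_{\overline{m}}=p^{\otimes\mathbb{Z}_{\geq 0}}$ is invariant under the one-sided Bernoulli shift $S$, i.e.\ $S_{*}\mathbb{P}_{\overline{m}}=\mathbb{P}_{\overline{m}}$. Combining these with (i) yields
\[
J_{*}\mathbb{Q}_{\overline{m}}=(J\circ\mathcal{P})_{*}\mathbb{P}_{\overline{m}}=(\mathcal{P}\circ S)_{*}\mathbb{P}_{\overline{m}}=\mathcal{P}_{*}\bigl(S_{*}\mathbb{P}_{\overline{m}}\bigr)=\mathcal{P}_{*}\mathbb{P}_{\overline{m}}=\mathbb{Q}_{\overline{m}}.
\]
It is enough to test this identity on the cylinders $C_{\pi}$, $\pi\in H^{\ell}$, which generate the Borel $\sigma$-algebra of $\Omega_{\mathcal{C}}$, and the pushforward computation above treats them uniformly.
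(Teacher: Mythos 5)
Your proposal is correct and follows essentially the same route as the paper: part (i) rests on the same observation that, by the tensor product rule (\ref{Ten_Prod}), each crystal operator applied to $\pi_{1}\otimes\tau$ either acts on the first factor or on the tail, so the tails of $\pi$ and of its highest weight vertex $\mathcal{P}(\pi)$ lie in the same connected component (the paper phrases this by splitting the string of raising operators joining $\pi$ to $\mathcal{P}(\pi)$ into those acting on the first factor and the rest, which is your tail-stability lemma in disguise). Part (ii) is the paper's argument verbatim, written in pushforward rather than preimage notation.
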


\begin{proof}
(i). To prove assertion (i), it suffices to establish that the above diagram
commutes on trajectories of any finite length $\ell>0$. So consider $\pi
=\pi_{1}\otimes\pi_{2}\otimes\cdots\otimes\pi_{\ell}\in B(\pi_{\kappa
})^{\otimes\ell}$ and set $\mathcal{P}(\pi)=\pi_{1}^{+}\otimes\pi_{2}%
^{+}\otimes\cdots\otimes\pi_{\ell}^{+}$. We have to prove that
\[
\mathcal{P}(\pi_{2}\otimes\cdots\otimes\pi_{\ell})=\mathcal{P}(\pi_{2}%
^{+}\otimes\cdots\otimes\pi_{\ell}^{+})
\]
which means that both vertices $\pi_{2}\otimes\cdots\otimes\pi_{\ell}$ and
$\pi_{2}^{+}\otimes\cdots\otimes\pi_{\ell}^{+}$ belong to the same connected
component of $B(\pi_{\kappa})^{\otimes\ell-1}$.\ We know that $\mathcal{P}%
(\pi)$ is the highest weight vertex of $B(\pi)$.\ This implies that there
exists a sequence of root operators $\tilde{e}_{i_{1}},\ldots,\tilde{e}%
_{i_{r}}$ such that
\begin{equation}
\pi_{1}^{+}\otimes\pi_{2}^{+}\otimes\cdots\otimes\pi_{\ell}^{+}=\tilde
{e}_{i_{1}}\cdots\tilde{e}_{i_{r}}(\pi_{1}\otimes\pi_{2}\otimes\cdots
\otimes\pi_{\ell}). \label{eqtens}%
\end{equation}
By (\ref{Ten_Prod}), we can define a subset $X:=\{k\in\{1,\ldots,r\}$ such
that $\tilde{e}_{i_{k}}$ acts on the first component of the tensor product
$\tilde{e}_{i_{k+1}}\cdots\tilde{e}_{i_{r}}(\pi_{1}\otimes\pi_{2}\otimes
\cdots\otimes\pi_{\ell})\}$. We thus obtain
\[
\pi_{2}^{+}\otimes\cdots\otimes\pi_{\ell}^{+}=\prod_{k\in\{1,\ldots
,r\}\setminus X}\tilde{e}_{i_{k}}(\pi_{2}\otimes\cdots\otimes\pi_{\ell})
\]
which shows that $\pi_{2}\otimes\cdots\otimes\pi_{\ell}$ and $\pi_{2}%
^{+}\otimes\cdots\otimes\pi_{\ell}^{+}$ belong to the same connected component
of $B(\pi_{\kappa})^{\otimes\ell-1}$. They thus have the same highest weight
path as desired.

(ii). Let $A\subset\Omega_{\mathcal{C}}$ be a $\mathbb{Q}$-measurable set. We
have $\mathbb{Q}(J^{-1}(A))=\mathbb{P}(\mathcal{P}^{-1}(J^{-1}(A))$ since
$\mathcal{P}$ is an homomorphism. Using the fact that previous diagram
commutes and $S$ preserves $\mathbb{P}$, we get $\mathbb{Q}(J^{-1}%
(A))=\mathbb{P}(S^{-1}(\mathcal{P}^{-1}(A)))=\mathbb{P}(\mathcal{P})$, so that
so $\mathbb{Q}(J^{-1}(A)) =\mathbb{Q}(A)$ since $\mathcal{P}$ is an homomorphism.
\end{proof}

\section{Dual random path and the inverse Pitman transform}

It is well known (see \cite{Pit}) that the Pitman transform on the line is
reversible. The aim of this paragraph is to establish that $\mathcal{E}$,
restricted to a relevant set of infinite trajectories with measure $1$, can be
regarded as a similar inverse for the generalized Pitman transform
$\mathcal{P}$.\ We \emph{assume in the remaining of the paper that }%
$\overline{m}\in D_{\kappa}$. This permits to define a random walk
$\mathcal{W}$ and a Markov chain $\mathcal{H=P(W)}$ as in Section
\ref{Sec_WP}.\ Since $\overline{m}$ is fixed, we will denote for short by
$\mathbb{P}$ and $\mathbb{Q}$ the probability distributions $\mathbb{P}%
_{\overline{m}}$ and $\mathbb{Q}_{\overline{m}}$, respectively.

\subsection{Typical trajectories}

Consider $\overline{m}\in\mathcal{D}_{\kappa}$ and the associated
distributions $\mathbb{P}_{\overline{m}}$ and $\mathbb{Q}_{\overline{m}}$
defined on $\Omega$ and $\Omega_{\mathcal{C}}$, respectively. We introduce the
subsets of typical trajectories in $\Omega^{\mathrm{typ}},\Omega
^{\iota\mathrm{typ}}$ and $\Omega_{\mathcal{C}}^{\mathrm{typ}}$ as%
\begin{align*}
\Omega^{\mathrm{typ}} &  =\{\pi\in\Omega\mid\lim_{\ell\rightarrow+\infty}%
\frac{1}{\ell}\langle\pi(\ell),\alpha_{i}^{\vee}\rangle=\langle\overline
{m},\alpha_{i}^{\vee}\rangle\in\mathbb{R}_{>0}\quad\forall i=1,\ldots,n\},\\
\Omega^{\iota\mathrm{typ}} &  =\{\pi\in\Omega\mid\lim_{\ell\rightarrow+\infty
}\frac{1}{\ell}\langle\pi(\ell),\alpha_{i}^{\vee}\rangle=\langle
w_{0}(\overline{m}),\alpha_{i}^{\vee}\rangle\in\mathbb{R}_{<0}\quad\forall
i=1,\ldots,n\}\\
\Omega_{\mathcal{C}}^{\mathrm{typ}} &  =\{\pi\in\Omega_{\mathcal{C}}\mid
\lim_{\ell\rightarrow+\infty}\frac{1}{\ell}\langle\pi(\ell),\alpha_{i}^{\vee
}\rangle=\langle\overline{m},\alpha_{i}^{\vee}\rangle\in\mathbb{R}_{>0}%
\quad\forall i=1,\ldots,n\}.
\end{align*}
By Theorems \ref{ThLLN} and \ref{ThLLN P}, we have
\[
\mathbb{P}_{\overline{m}}(\Omega^{\mathrm{typ}})=1\text{ and }\mathbb{Q}%
_{\overline{m}}(\Omega_{\mathcal{C}}^{\mathrm{typ}})=1.
\]
We are going to see that the relevant Pitman inverse coincides with
$\mathcal{E}$ acting on the trajectories of $\Omega_{\mathcal{C}%
}^{\mathrm{typ}}$ and we will show that $\mathcal{E}(\mathcal{H})$ is then a
random trajectory with drift $w_{0}(\overline{m})$ where $w_{0}$ is the
longest element of the Weyl group ${\mathsf{W}}$.

\subsection{An involution on the trajectories\label{subsecLusztig}}

We have seen that the reverse map $r$ on paths defined in (\ref{fer}) flips
the actions of the operators $\tilde{e}_{i}$ and $\tilde{f}_{i}$ on any
connected crystal $B(\pi_{\kappa})$ of highest path $\pi_{\kappa}$%
.$\ $Nevertheless, we have
\[
r(B(\pi_{\kappa})\neq B(\pi_{\kappa})
\]
in general.\ So $r(\Omega)\neq\Omega$. To overcome this difficulty we can
replace our space of trajectories $\Omega$ by the set $\mathcal{L}_{\infty}$
of all infinite paths defined from the set $\mathcal{L}$ of \S \
\ref{subsecLit}. But $\mathcal{L}_{\infty}$ has not a probability space
structure neither a simple algebraic interpretation.\ Rather, it is
interesting to give another definition of $\mathcal{E}$ where the involution
$r$ is replaced by the Lusztig involution $\iota$ which stabilizes
$B(\pi_{\kappa})$ (see for example \cite{Len}).\ The longest element $w_{0}$
of the Weyl group $W$ (which is an involution) induces an involution $\ast$ on
the set of simple roots defined by $\alpha_{i^{\ast}}=-w_{0}(\alpha_{i})$ for
any $i=1,\ldots,n$.\ Write $\pi_{\kappa}^{low}$ for the lowest weight vertex
of $B(\pi_{\kappa})$, that is $\pi_{\kappa}^{low}$ is the unique vertex of
$B(\pi_{\kappa})$ such that $\tilde{f}_{i}(\pi_{\kappa}^{low})=0$ for any
$i=1,\ldots,n$.\ The involution $\iota$ is first defined on the crystal
$B(\pi_{\kappa})$ by
\[
\iota(\pi_{\kappa})=\pi_{\kappa}^{low}\text{ and }\iota(\tilde{f}_{i_{1}%
}\cdots\tilde{f}_{i_{r}}\pi_{\kappa})=\tilde{e}_{i_{1}^{\ast}}\cdots\tilde
{e}_{i_{r}^{\ast}}(\pi_{\kappa}^{low})
\]
for any sequence of crystal operators $\tilde{f}_{i_{1}},\ldots,\tilde
{f}_{i_{r}}$ with $r>0$. This means that $\iota$ flips the orientation of the
arrows of $B(\pi_{\kappa})$ and each label $i$ is changed in $i^{\ast}$. In
particular, we have $\mathrm{wt}(\iota(\pi))=w_{0}(\mathrm{wt}(\pi))$ for any
$\pi\in B(\pi_{\kappa})$.\ We extend $\iota$ by linearity on the linear
combinations of paths in $B(\pi_{\kappa})$.

\bigskip

\noindent We next define the involution $\iota$ on $B(\pi_{\kappa}%
)^{\otimes\ell}$ by setting
\[
\iota(\pi_{1}\otimes\cdots\otimes\pi_{\ell})=\iota(\pi_{\ell})\otimes
\cdots\otimes\iota(\pi_{1})
\]
for any $\pi_{1}\otimes\cdots\otimes\pi_{\ell}\in B(\pi_{\kappa})^{\otimes
\ell}$.\ It then follows from (\ref{Ten_Prod}) that for any any $i=1,\ldots,n$
we have
\begin{equation}
\iota\tilde{f}_{i}\iota(\pi_{1}\otimes\cdots\otimes\pi_{\ell})=\tilde
{e}_{i^{\ast}}(\pi_{1}\otimes\cdots\otimes\pi_{\ell}).\label{conjbeta}%
\end{equation}
Thus the involution $\iota$ flips the lowest and highest weight paths,
reverses the arrows and changes each label $i$ in $i^{\ast}$. In particular,
for any connected component $B(\eta)$ of $B(\pi_{\kappa})^{\otimes\ell}$, the
set $\iota(B(\eta))$ is also a connected component of $B(\pi_{\kappa
})^{\otimes\ell}$. In addition, we have%
\begin{equation}
\mathrm{wt}(\iota(\pi_{1}\otimes\cdots\otimes\pi_{\ell}))=w_{0}(\mathrm{wt}%
(\pi_{1}\otimes\cdots\otimes\pi_{\ell})).\label{iotaweight}%
\end{equation}

\begin{remark}
Observe that $\iota$ is very closed from $r$.\ The crucial difference is that
the crystals $\iota(B(\pi_{\kappa}))$ and $B(\pi_{\kappa})$ coincide whereas
$r(B(\pi_{\kappa}))$ is not a crystal in general.
\end{remark}

\begin{example}
We resume Example \ref{exam_typ} and consider $\mathfrak{g}=\mathfrak{sp}_{4}$
and $\kappa=\omega_{1}$. In this particular case we get $w_{0}=-id$ and
$\iota=r$ on $B(\pi_{\omega_{1}})$.\ We then have $\iota(\pi_{1}%
)=\pi_{\overline{1}}$ and $\iota(\pi_{2})=\pi_{\overline{2}}$. In the picture
below we have drawn the path $\eta$ and $\iota(\eta)$ where%
\begin{align*}
\eta &  =112111\bar{2}\bar{1}\bar{2}111222\bar{1}\bar{2}111222111\bar{2}%
\bar{1}22211,\\
\iota(\eta) &  =\bar{1}\bar{1}\bar{2}\bar{2}\bar{2}12\bar{1}\bar{1}\bar{1}%
\bar{2}\bar{2}\bar{2}\bar{1}\bar{1}\bar{1}21\bar{2}\bar{2}\bar{2}\bar{1}%
\bar{1}\bar{1}212\bar{1}\bar{1}\bar{1}\bar{2}\bar{1}\bar{1}.
\end{align*}
Here we simply write $a\in\{\bar{2},\bar{1},1,2\}$ instead of $\pi_{a}$ and
omitted for short the symbols $\otimes$.
\end{example}

\begin{center}%
%TCIMACRO{\FRAME{ihFU}{10.1067cm}{7.6091cm}{0cm}{\Qcb{The paths $\eta$ (in red)
%and $\beta(\eta)$ (in dashed read)}}{\Qlb{Fig_iota}}{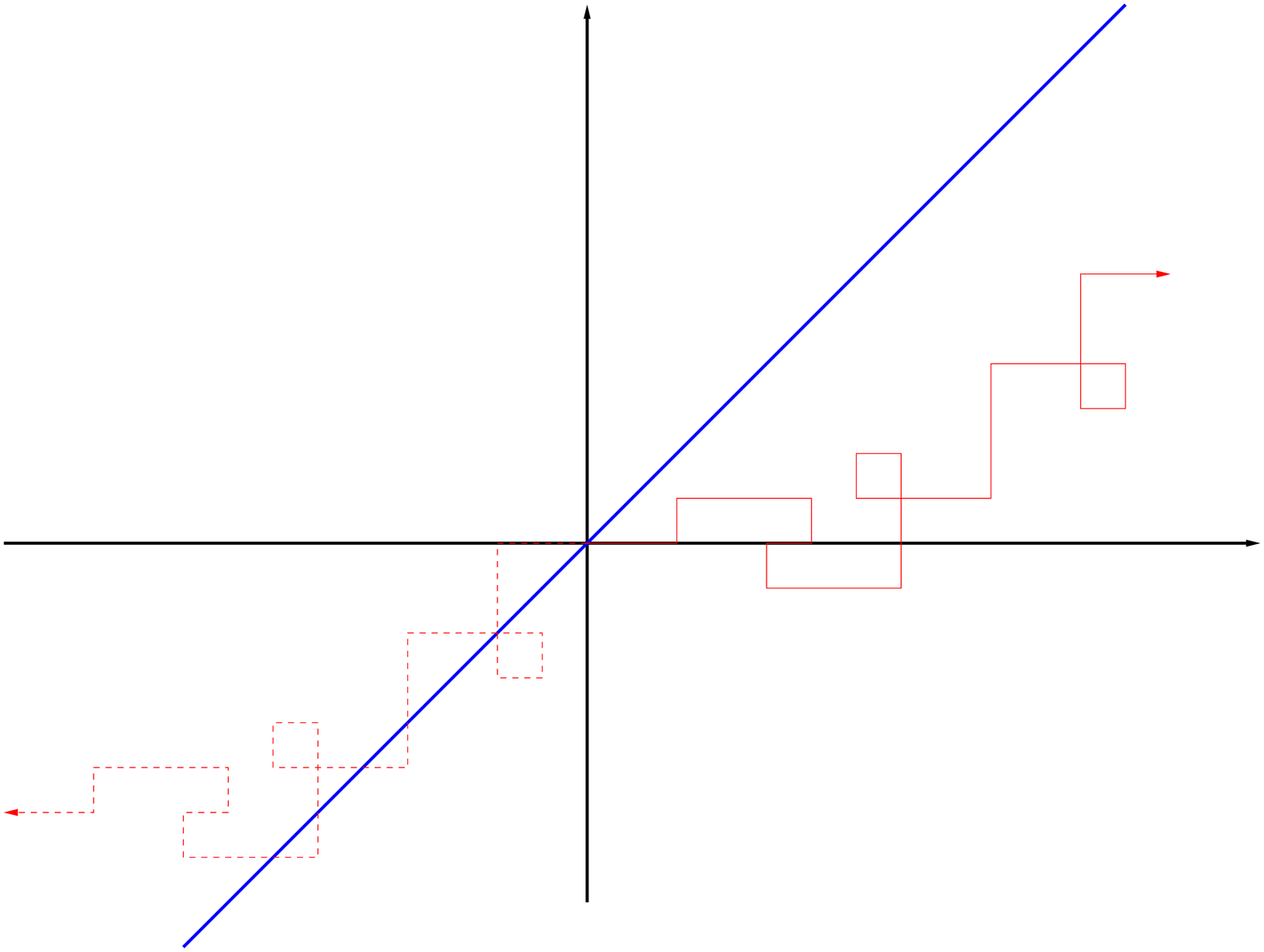}%
%{\special{ language "Scientific Word";  type "GRAPHIC";
%maintain-aspect-ratio TRUE;  display "USEDEF";  valid_file "F";
%width 10.1067cm;  height 7.6091cm;  depth 0cm;  original-width 11.0878in;
%original-height 8.3281in;  cropleft "0";  croptop "1";  cropright "1";
%cropbottom "0";  filename 'iota.eps';file-properties "XNPEU";}}}%
%BeginExpansion
\raisebox{-0cm}{\parbox[b]{10.1067cm}{\begin{center}
\includegraphics[
height=7.6091cm,
width=10.1067cm
]%
{iota.eps}%
\\
The paths $\eta$ (in red) and $\beta(\eta)$ (in dashed read)
\end{center}}}%
%EndExpansion

\end{center}

The following proposition shows we can replace the involution $r$ by $\iota$
in the definition of the dual Pitman transform.

\begin{proposition}
\label{Lemma_rbeta}We have
\[
\mathcal{E}=r\mathcal{P}r=\iota\mathcal{P}\iota.
\]

\end{proposition}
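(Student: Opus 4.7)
The plan is to reduce the statement to showing $\iota\mathcal{P}\iota=\mathcal{E}$, since the first equality $\mathcal{E}=r\mathcal{P}r$ is already the definition of $\mathcal{E}$ given in \S\ref{subsec_Pitman}. More precisely, I will prove that for any $\eta\in B(\pi_{\kappa})^{\otimes\ell}$, the path $\iota\mathcal{P}\iota(\eta)$ is the unique vertex of the connected component $B(\eta)$ killed by every $\tilde{f}_j$, which by definition is $\mathcal{E}(\eta)$.

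The first step is to extract a companion to (\ref{conjbeta}). Conjugating both sides of $\iota\tilde{f}_i\iota=\tilde{e}_{i^{\ast}}$ by $\iota$ and relabelling using the fact that $\ast$ is an involution on $\{1,\ldots,n\}$, I obtain $\iota\tilde{e}_j\iota=\tilde{f}_{j^{\ast}}$ for every $j$. Together, these two identities say that $\iota$ intertwines the Kashiwara operators up to the $\ast$-relabelling and an exchange of raising and lowering operators.

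The second step is to show that $\iota$ permutes connected components, i.e.\ $\iota(B(\eta))=B(\iota(\eta))$ for every $\eta\in B(\pi_{\kappa})^{\otimes\ell}$. Writing an arbitrary $b\in B(\eta)$ as a word $\widetilde{X}_{i_1}\cdots\widetilde{X}_{i_r}(\eta)$ with $\widetilde{X}_{i_k}\in\{\tilde{e}_{i_k},\tilde{f}_{i_k}\}$, the conjugation formulas from step~1 combined with $\iota^2=\mathrm{id}$ give $\iota(b)=\widetilde{Y}_{i_1^{\ast}}\cdots\widetilde{Y}_{i_r^{\ast}}(\iota(\eta))$ with each $\widetilde{Y}_k\in\{\tilde{e}_k,\tilde{f}_k\}$, so $\iota(b)\in B(\iota(\eta))$; the reverse inclusion follows by applying the same argument to $\iota^{-1}=\iota$.

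Setting $\eta':=\iota(\eta)$ and $\pi^{+}:=\mathcal{P}(\eta')$, which by definition of $\mathcal{P}$ satisfies $\tilde{e}_i(\pi^{+})=0$ for all $i$, step~1 then forces $\tilde{f}_{i^{\ast}}(\iota(\pi^{+}))=0$ for all $i$, hence $\tilde{f}_j(\iota(\pi^{+}))=0$ for all $j$ since $\ast$ is a bijection on simple root indices. Step~2 places $\iota(\pi^{+})$ inside $\iota(B(\iota(\eta)))=B(\eta)$, and uniqueness of the lowest weight vertex in a connected Littelmann crystal forces $\iota(\pi^{+})=\mathcal{E}(\eta)$. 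The only subtlety is notational: everything hinges on the fact that $\iota$ on $B(\pi_{\kappa})^{\otimes\ell}$ is defined with the tensor factors reversed, and it is precisely this reversal that makes (\ref{conjbeta}) hold; a componentwise involution would not produce the desired intertwining.
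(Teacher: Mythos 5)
Your proof is correct, but it takes a genuinely different route from the paper's. The paper argues factor by factor: using (\ref{conjbeta}) and the observation that each $\mathcal{E}_{\alpha_{i}}$ acts on a path as a product of operators $\tilde{f}_{i}$, it first establishes $\mathcal{E}_{\alpha_{i}}=\iota\mathcal{P}_{\alpha_{i^{\ast}}}\iota$ for each simple root, composes these identities to get $\mathcal{E}=\iota\mathcal{P}_{\alpha_{1^{\ast}}}\cdots\mathcal{P}_{\alpha_{r^{\ast}}}\iota$, and then invokes Proposition \ref{Prop-BBO} together with the fact that $s_{\alpha_{1^{\ast}}}\cdots s_{\alpha_{r^{\ast}}}$ is another reduced decomposition of $w_{0}$ to identify the middle composition with $\mathcal{P}$. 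You instead work with the intrinsic crystal characterizations of $\mathcal{P}(\eta)$ and $\mathcal{E}(\eta)$ as the unique highest and lowest weight vertices of $B(\eta)$, and show that $\iota$ carries connected components to connected components while exchanging the $\tilde{e}$'s and $\tilde{f}$'s up to the $\ast$-relabelling, so that $\iota\mathcal{P}\iota(\eta)$ is the lowest weight vertex of $B(\eta)$. Your argument is shorter and bypasses both the explicit decomposition of $\mathcal{P}$ into one-root Pitman transforms and the independence of that decomposition from the choice of reduced word; the paper's argument yields as a by-product the finer factorwise identity $\mathcal{E}_{\alpha_{i}}=\iota\mathcal{P}_{\alpha_{i^{\ast}}}\iota$, which is in the same spirit as the way $\mathcal{P}$ and $\mathcal{E}$ are later extended to infinite paths via their one-root factors. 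Two small points worth making explicit in your write-up: the convention $\iota(\mathbf{0})=\mathbf{0}$, needed when you pass from $\tilde{e}_{i}(\pi^{+})=\mathbf{0}$ to $\tilde{f}_{i^{\ast}}(\iota(\pi^{+}))=\mathbf{0}$ (this is implicit in (\ref{conjbeta}) as well), and the fact that the first equality $\mathcal{E}=r\mathcal{P}r$ is indeed already recorded in the paper as a consequence of (\ref{fer}), so your reduction to the second equality is legitimate.
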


\begin{proof}
Observe first that for any simple root $\alpha_{i}$, and any path $\eta\in
B(\pi_{\kappa})^{\otimes\ell}$, we have by (\ref{conjbeta}) $\mathcal{E}%
_{\alpha_{i}}(\eta)=\iota\mathcal{P}_{\alpha_{i^{\ast}}}\iota(\eta)$ because
the action of $\mathcal{E}_{\alpha_{i}}$ on any path reduces to a product of
operators $\tilde{f}_{i}$.\ Since $\mathcal{E}=\mathcal{E}_{\alpha_{1}}%
\cdots\mathcal{E}_{\alpha_{r}}$, we get $\mathcal{E}=\iota\mathcal{P}%
_{\alpha_{1^{\ast}}}\cdots\mathcal{P}_{\alpha_{r^{\ast}}}\iota$.\ But
$\mathcal{P}_{\alpha_{1^{\ast}}}\cdots\mathcal{P}_{\alpha_{r^{\ast}}%
}=\mathcal{P}_{\alpha_{1}}\cdots\mathcal{P}_{\alpha_{r}}=\mathcal{P}$ by
Proposition \ref{Prop-BBO} because $w_{0}=s_{\alpha_{1^{\ast}}}\cdots
s_{\alpha_{r^{\ast}}}$ is also a minimal length decomposition of $w_{0}$. We
therefore get $\mathcal{E}=\iota\mathcal{P}\iota$ as desired.
\end{proof}

\subsection{Dual random path}

{Let us define the probability distribution $p^{\iota}$ on $B(\pi_{\kappa})$
by setting
\begin{equation}
p_{\pi}^{\iota}=p_{\iota(\pi)}=\frac{\tau^{\kappa-w_{0}\mathrm{wt}(\pi)}%
}{S_{\kappa}(\tau)}\text{ for any }\pi\in B(\pi_{\kappa}) \label{def_p_iota}%
\end{equation}
and consider a random variable $Y$ with values in $B(\pi_{\kappa})$ and
probability distribution $p^{\iota}$. Set $m^{\iota}=E(Y),$ $\overline
{m}^{\iota}=m^{\iota}(1)$ and $\mathcal{D}_{\kappa}^{\iota}=w_{0}%
(\mathcal{D}_{\kappa})$.}

\begin{lemma}
We have

\begin{enumerate}
\item[(i)] $m^{\iota}=\iota(m)$

\item[(ii)] $\overline{m}^{\iota}=w_{0}(\overline{m})$. In particular,
$\overline{m}\in\mathcal{D}_{\kappa}$ if and only if $\overline{m}^{\iota}%
\in\mathcal{D}_{\kappa}^{\iota}$.
\end{enumerate}
\end{lemma}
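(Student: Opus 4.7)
The plan is to prove (i) by direct substitution in the definition of $m^{\iota}$, then deduce (ii) by evaluating at $t=1$ and invoking the compatibility of $\iota$ with the Weyl group action on weights.

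For (i), I would start from $m^{\iota}=\sum_{\pi\in B(\pi_{\kappa})}p^{\iota}_{\pi}\pi=\sum_{\pi\in B(\pi_{\kappa})}p_{\iota(\pi)}\pi$, where the second equality uses the definition \eqref{def_p_iota}. Since $\iota$ is an involution on the finite set $B(\pi_{\kappa})$, the change of variable $\pi'=\iota(\pi)$ reindexes the sum as $m^{\iota}=\sum_{\pi'\in B(\pi_{\kappa})}p_{\pi'}\iota(\pi')$. By the linear extension of $\iota$ to formal combinations of paths in $B(\pi_{\kappa})$ recalled in Subsection~\ref{subsecLusztig}, this is exactly $\iota\!\left(\sum_{\pi'}p_{\pi'}\pi'\right)=\iota(m)$.

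For (ii), I would evaluate both sides of (i) at $t=1$. On the left we get $m^{\iota}(1)=\overline{m}^{\iota}$ by definition. On the right, using linearity of evaluation and the identity $\iota(\pi)(1)=\mathrm{wt}(\iota(\pi))=w_{0}(\mathrm{wt}(\pi))=w_{0}(\pi(1))$ valid for every $\pi\in B(\pi_{\kappa})$ (see \eqref{iotaweight} in the single-factor case), we obtain
\[
\iota(m)(1)=\sum_{\pi\in B(\pi_{\kappa})}p_{\pi}\,w_{0}(\pi(1))=w_{0}\!\left(\sum_{\pi}p_{\pi}\pi(1)\right)=w_{0}(\overline{m}),
\]
where the middle equality uses linearity of $w_{0}$. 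This yields $\overline{m}^{\iota}=w_{0}(\overline{m})$.

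The equivalence $\overline{m}\in\mathcal{D}_{\kappa}\iff\overline{m}^{\iota}\in\mathcal{D}_{\kappa}^{\iota}$ is then immediate from $\mathcal{D}_{\kappa}^{\iota}:=w_{0}(\mathcal{D}_{\kappa})$ and the fact that $w_{0}$ is an involution on $P_{\mathbb{R}}$. No real obstacle arises in this argument; the only point that requires care is to distinguish, in the identity $m^{\iota}=\iota(m)$, the probabilistic meaning of $m^{\iota}$ as the expectation of $Y$ from the algebraic meaning of $\iota(m)$ as the image of $m$ under the linearly extended Lusztig involution, and to observe that the evaluation map $\eta\mapsto\eta(1)$ intertwines $\iota$ with $w_{0}$ thanks to \eqref{iotaweight}.
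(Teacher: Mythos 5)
Your proof is correct and follows essentially the same route as the paper: the identity $m^{\iota}=\iota(m)$ via the involution property of $\iota$ and the reindexing of the sum, then evaluation at $t=1$ together with $\mathrm{wt}(\iota(\pi))=w_{0}(\mathrm{wt}(\pi))$ to get $\overline{m}^{\iota}=w_{0}(\overline{m})$. Your version is in fact slightly more explicit than the paper's about the change of variable and the final evaluation step, but the argument is the same.
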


\begin{proof}
By using that $\iota$ is an involution on $B(\pi_{\kappa})$, we get%
\[
m^{\iota}=\sum_{\pi\in B(\pi_{\kappa})}p_{\pi}^{\iota}\pi=\sum_{\pi\in
B(\pi_{\kappa})}p_{\iota(\pi)}\pi=\iota\left(  \sum_{\pi\in B(\pi_{\kappa}%
)}p_{\iota(\pi)}\iota(\pi)\right)  =\iota(m)
\]
which proves assertion (i). In particular, if we set $\overline{m}^{\iota
}=m^{\iota}(1)$, we have $\overline{m}^{\iota}=w_{0}(\overline{m})$ and
assertion 2 follows.
\end{proof}

\bigskip

{Similarly, we may consider the probability measure $(p^{\iota})^{\otimes\ell
}$ on $B(\pi_{\kappa})^{\otimes\ell}$ defined by
\[
(p^{\iota})^{\otimes\ell}(\pi_{1}\otimes\cdots\otimes\pi_{\ell})=p^{\iota}%
(\pi_{1})\cdots p^{\iota}(\pi_{\ell})=\frac{\tau^{\ell\kappa-w_{0}(\pi
_{1}(1)+\cdots\pi_{\ell}(1))}}{S_{\kappa}(\tau)^{\ell}}=\frac{\tau^{\ell
\kappa-w_{0}({\mathrm{wt}}(b))}}{S_{\kappa}(\tau)^{\ell}}.\quad^{(}%
\text{\footnote{We now have two probability measures $p^{\otimes\ell}$ and
$(p^{\iota})^{\otimes\ell}$ on $B(\pi_{\kappa})^{\otimes\ell}$. Observe that
for any event $E\subset B(\pi_{\kappa})^{\otimes\ell}$, we get
\begin{equation}
(p^{\iota})^{\otimes\ell}(E)=p^{\otimes\ell}(\iota(E)).\label{p_piota}%
\end{equation}
}}^{)}%
\]
}

By the Kolmogorov extension theorem, the family of probability measure
$((p^{\iota})^{\otimes\ell})_{\ell}$ admits a unique extension $\mathbb{P}%
^{\iota}:=(p^{\iota})^{\otimes{\mathbb{Z}_{\geq0}}}$ to the space
$B(\pi_{\kappa})^{\otimes\mathbb{Z}_{\geq0}}$. For any $\ell\geq1$, let
$Y_{\ell}:B(\pi_{\kappa})^{\otimes\mathbb{Z}_{\geq0}}\longrightarrow
B(\pi_{\kappa})$ be the canonical projection on the $\ell^{th}$ coordinate; by
construction, the variables $Y_{1},Y_{2},\cdots$ are independent and
identically distributed with the same law as $Y$. We denote by ${\mathcal{W}%
}^{\iota}$ the random path defined by
\[
{\mathcal{W}}^{\iota}(t):=Y_{1}(1)+Y_{2}(1)+\cdots+Y_{\ell-1}(1)+Y_{\ell
}(t-\ell+1){\text{ for }}t\in\lbrack\ell-1,\ell].
\]
Then ${\mathcal{W}}^{\iota}$ is defined on the probability space
$\Omega^{\iota}=(B(\pi_{\kappa})^{\otimes\mathbb{Z}_{\geq0}},\mathbb{P}%
^{\iota})$; notice that the set of trajectories of $\Omega^{\iota}$ is the
same as the one of $\Omega$ but that the probability $\mathbb{P}^{\iota}$ is
defined from $p^{\iota}$.\ We have in particular%
\[
\mathbb{P}^{\iota}(\Omega^{\iota\mathrm{typ}})=1.
\]
We also define the random walk $W^{\iota}=(W_{\ell}^{\iota})_{\ell\geq1}$ such
that $W_{\ell}^{\iota}=\mathcal{W}^{\iota}(\ell)$ for any $\ell\geq1$.\ Let
${\mathcal{H}}^{\iota}$ be the random process ${\mathcal{H}}^{\iota}%
=${$\mathcal{P}$}$({\mathcal{W}}^{\iota})$ and define $H^{\iota}=(H_{\ell
}^{\iota})_{\ell\geq1}$ such that $H_{\ell}^{\iota}=\mathcal{H}^{\iota}(\ell)$
for any $\ell\geq1$. We then have (see Proposition 4.6 in \cite{LLP3})

\begin{theorem}
\label{Th_LawH copy(1)} \ 

\begin{enumerate}
\item[(i)] For any $\beta,\eta\in P$, one gets
\[
{\mathbb{P}}^{\iota}(W_{\ell+1}^{\iota}=\beta\mid W_{\ell}^{\iota}%
=\eta)=K_{\kappa,\beta-\eta,}\frac{\tau^{\kappa-w_{0}(\beta-\eta)}}{S_{\kappa
}(\tau)}.
\]

\item[(ii)] The random sequence $H^{\iota}$ is a Markov chain with the same
law as $H$, that is with transition matrix%
\[
\Pi(\mu,\lambda)=\frac{S_{\lambda}(\tau)}{S_{\kappa}(\tau)S_{\mu}(\tau)}%
\tau^{\kappa+\mu-\lambda}m_{\mu,\kappa}^{\lambda}%
\]
where $\lambda,\mu\in P_{+}$.

\item[(iii)] For any path $\pi\in H_{\lambda}^{\ell}$,we have%
\[
\mathbb{P}^{\iota}({\mathcal{H}}^{\iota}=\pi)=\mathbb{P}({\mathcal{H}}%
=\pi)=\frac{\tau^{\ell\kappa-\lambda}S_{\lambda}(\tau)}{S_{\kappa}(\tau
)^{\ell}}.
\]

\end{enumerate}
\end{theorem}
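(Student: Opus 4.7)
The plan is to establish (i) by a direct calculation, (iii) by a character-theoretic identity, and (ii) as a consequence of (iii) via comparison of finite-dimensional distributions.

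For (i), the variables $(Y_\ell)_{\ell \geq 1}$ are i.i.d.\ with law $p^\iota$, so
\[
{\mathbb{P}}^\iota(W^\iota_{\ell+1} = \beta \mid W^\iota_\ell = \eta) \;=\; \sum_{\substack{\pi \in B(\pi_\kappa) \\ {\mathrm{wt}}(\pi) = \beta - \eta}} p^\iota_\pi.
\]
By assertion (v) of Theorem \ref{Th_Littel}, the number of $\pi$ with prescribed weight $\beta - \eta$ equals the Kostka number $K_{\kappa, \beta - \eta}$, and each such $\pi$ satisfies $p^\iota_\pi = \tau^{\kappa - w_0(\beta - \eta)}/S_\kappa(\tau)$ by (\ref{def_p_iota}); this immediately yields the claimed formula.

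For (iii), fix $\pi \in H^\ell_\lambda$. Since $\mathcal{P}$ sends every element of $B(\pi_\kappa)^{\otimes \ell}$ to the highest-weight vertex of its connected component, the event $\{\mathcal{H}^\iota = \pi\}$ coincides with $\{\mathcal{W}^\iota \in B(\pi)\}$, and therefore
\[
{\mathbb{P}}^\iota(\mathcal{H}^\iota = \pi) \;=\; \sum_{b \in B(\pi)} (p^\iota)^{\otimes \ell}(b) \;=\; \frac{\tau^{\ell \kappa}}{S_\kappa(\tau)^\ell} \sum_{\mu \in P} K_{\lambda, \mu}\,\tau^{-w_0 \mu},
\]
where we group the paths of $B(\pi)$ by weight using Theorem \ref{Th_Littel}(v). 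The Weyl symmetry $K_{\lambda, \mu} = K_{\lambda, w_0 \mu}$, a consequence of the ${\mathsf{W}}$-action of Theorem \ref{Th_Littel}(vii), combined with the substitution $\mu \mapsto w_0 \mu$, converts the inner sum into $\sum_\mu K_{\lambda, \mu}\tau^{-\mu} = \tau^{-\lambda} S_\lambda(\tau)$. One then obtains ${\mathbb{P}}^\iota(\mathcal{H}^\iota = \pi) = \tau^{\ell \kappa - \lambda} S_\lambda(\tau)/S_\kappa(\tau)^\ell$, which equals ${\mathbb{P}}(\mathcal{H} = \pi)$ by Theorem \ref{Th_LawH}(ii).

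Finally, (ii) is deduced from (iii). Since ${\mathbb{P}}^\iota(\mathcal{H}^\iota = \pi)$ depends on $\pi \in H^\ell_\lambda$ only through the pair $(\ell, \lambda)$, the joint probability ${\mathbb{P}}^\iota(H^\iota_i = \mu_i,\ 1 \leq i \leq \ell)$ factorizes as $N(\mu_1, \ldots, \mu_\ell) \cdot \tau^{\ell \kappa - \mu_\ell} S_{\mu_\ell}(\tau) / S_\kappa(\tau)^\ell$, where $N(\mu_1, \ldots, \mu_\ell)$ is the combinatorial count of paths $\pi \in H^\ell_{\mu_\ell}$ with $\pi(i) = \mu_i$ for all $i$. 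The analogous identity holds for ${\mathbb{P}}(H_i = \mu_i,\ 1 \leq i \leq \ell)$, so $H^\iota$ and $H$ have the same finite-dimensional distributions; in particular $H^\iota$ is a Markov chain with transition matrix given by Theorem \ref{Th_LawH}(i). The most delicate point in this plan is the Weyl-invariance step in (iii): this character-theoretic identity is what absorbs the $w_0$ introduced by the $\iota$-twist in $p^\iota$, and is the precise reason why the two different step distributions $p$ and $p^\iota$ produce the same law for the Pitman-transformed process.
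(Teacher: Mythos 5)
Your argument is correct. Note, however, that the paper itself gives no proof of this statement: it is quoted from Proposition 4.6 of \cite{LLP3} (it is the $\iota$-twisted analogue of Theorem \ref{Th_LawH}), so your derivation is a self-contained reconstruction rather than a variant of an argument in the text. The reconstruction is sound at every step: (i) is the correct summation of $p^\iota_\pi$ over the weight space of weight $\beta-\eta$, whose cardinality is $K_{\kappa,\beta-\eta}$ by Theorem \ref{Th_Littel}(v); in (iii) the identification $\{\mathcal{H}^\iota=\pi\}=\{\Pi_\ell(\mathcal{W}^\iota)\in B(\pi)\}$ is exactly how the paper computes $\mathbb{P}(\mathcal{H}=\pi)$ in Corollary \ref{Cor-Iso-ProbaSpace}, and your key observation --- that the $\mathsf{W}$-invariance $K_{\lambda,\mu}=K_{\lambda,w_0\mu}$ together with the substitution $\mu\mapsto w_0\mu$ turns $\sum_\mu K_{\lambda,\mu}\tau^{-w_0\mu}$ into $\tau^{-\lambda}S_\lambda(\tau)$ --- is precisely the mechanism by which the $w_0$-twist in $p^\iota$ disappears from the law of the transformed process; and the passage from (iii) to (ii) via finite-dimensional distributions is legitimate because $\mathcal{P}_\alpha(\eta)(t)$ in (\ref{defPalpha}) depends only on $\eta|_{[0,t]}$, so $H^\iota_i$ is the value at time $i$ of $\mathcal{P}(\Pi_\ell(\mathcal{W}^\iota))$ and the cylinder $\{H^\iota_i=\mu_i,\,1\le i\le\ell\}$ is indeed a disjoint union of events $\{\mathcal{H}^\iota=\pi\}$ with $\pi\in H^\ell_{\mu_\ell}$; you might state this restriction-compatibility explicitly, but it is the only implicit step and it is harmless.
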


\subsection{Asymptotic behavior in a fixed component}

Consider $\pi\in B(\pi_{\kappa})^{\otimes\ell}$ and $\eta\in\Omega$ such that
$\frac{1}{L}\langle\eta(L),\alpha_{i}^{\vee}\rangle$ converges to a positive
limit for any positive root $\alpha_{i},i=1,\ldots n$. For any $L,$ set
$\Pi_{L}(\eta)=\eta_{L}$ so that we have $\eta_{L}\in B(\pi_{\kappa})^{\otimes
L}$. Since $\pi\in B(\pi_{\kappa})^{\otimes\ell}$, the path $\eta_{L}%
\otimes\pi$ is defined on $[0,\ell+L].$ More precisely, we have $\eta
_{L}\otimes\pi(t)=\eta_{L}(t)$ for $t\in\lbrack0,L[$ and $\eta_{L}\otimes
\pi(t)=\eta_{L}(L)+\pi(t-L)$ for $t\in\lbrack L,\ell+L]$.

\begin{lemma}
\label{Lemma_Left}With the previous notation, we get%
\[
{\mathcal{P}}(\eta_{L}\otimes\pi)={\mathcal{P}}(\eta_{L})\otimes\pi
\]
for $L$ sufficiently large.
\end{lemma}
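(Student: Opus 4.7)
The plan is to factor $\mathcal{P}=\mathcal{P}_{\alpha_{i_{1}}}\cdots\mathcal{P}_{\alpha_{i_{r}}}$ via Proposition \ref{Prop-BBO} and establish the identity one elementary Pitman operator at a time. The building block I aim for is the following.

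\emph{Claim.} Let $\alpha$ be a simple root and let $\sigma\in\Omega$ be a trajectory such that $\frac{1}{L}\langle\sigma(L),\alpha^{\vee}\rangle$ tends to a positive limit. Then $\mathcal{P}_{\alpha}(\sigma_{L}\otimes\pi)=\mathcal{P}_{\alpha}(\sigma_{L})\otimes\pi$ for every $L$ large enough, where $\sigma_{L}:=\Pi_{L}(\sigma)$.

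To prove the claim I would apply Lemma \ref{lemma_restrict} to fix an integer $\ell_{0}$ with
\[
\inf_{s\in[0,L]}\langle\sigma(s),\alpha^{\vee}\rangle=\inf_{s\in[0,\ell_{0}]}\langle\sigma(s),\alpha^{\vee}\rangle=:-I
\]
for all $L\geq\ell_{0}$. Since the paths in $B(\pi_{\kappa})$ share a common length, $|\langle\pi(u),\alpha^{\vee}\rangle|$ is bounded by some constant $C$ on $[0,\ell]$, while $\langle\sigma(L),\alpha^{\vee}\rangle\to+\infty$; hence for $L$ large
\[
\inf_{s\in[L,L+\ell]}\langle(\sigma_{L}\otimes\pi)(s),\alpha^{\vee}\rangle\geq\langle\sigma(L),\alpha^{\vee}\rangle-C>-I,
\]
so the infimum of $\langle(\sigma_{L}\otimes\pi)(\cdot),\alpha^{\vee}\rangle$ on $[0,t]$ still equals $-I$ for every $t\in[\ell_{0},L+\ell]$. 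Substituting this into the definition (\ref{defPalpha}) and treating $t\in[0,L]$ and $t\in[L,L+\ell]$ separately yields the claim, using in the second regime that $\mathcal{P}_{\alpha}(\sigma_{L})(L)=\sigma(L)+I\alpha$.

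To finish, I iterate. The operator $\mathcal{P}_{\alpha}$ commutes with truncation, since (\ref{defPalpha}) only involves values on $[0,t]$, so $\Pi_{L}(\mathcal{P}_{\alpha}(\sigma))=\mathcal{P}_{\alpha}(\Pi_{L}(\sigma))$ for any $\sigma\in\Omega$. Applying the claim first with $(\sigma,\alpha)=(\eta,\alpha_{i_{r}})$ gives the identity for $\mathcal{P}_{\alpha_{i_{r}}}$, and to re-apply it with $\sigma=\mathcal{P}_{\alpha_{i_{r}}}(\eta)$ and $\alpha=\alpha_{i_{r-1}}$ I must ensure that this intermediate trajectory still has positive drift along $\alpha_{i_{r-1}}^{\vee}$. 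This is exactly what Lemma \ref{Lem_rec}(ii) provides: applied to the tail $(\alpha_{i_{a}},\ldots,\alpha_{i_{r}})$ it yields
\[
\sup_{t\geq 0}\Vert\mathcal{P}_{\alpha_{i_{a}}}\cdots\mathcal{P}_{\alpha_{i_{r}}}(\eta)(t)-\eta(t)\Vert<+\infty,
\]
so each intermediate trajectory inherits the same drift vector $\overline{m}\in\mathring{\mathcal{C}}$ as $\eta$ (by dividing by $t$ and letting $t\to+\infty$), and in particular has positive scalar product with every simple coroot. A finite induction on the number of elementary Pitman factors then gives $\mathcal{P}(\eta_{L}\otimes\pi)=\mathcal{P}(\eta_{L})\otimes\pi$ for $L$ sufficiently large. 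The only genuine difficulty is precisely the preservation of the positive drift under successive Pitman transforms, which is why Lemma \ref{Lem_rec} is indispensable; once that is granted, the verification for each single $\mathcal{P}_{\alpha}$ reduces to the elementary estimate on the infimum above.
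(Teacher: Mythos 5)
Your proof is correct and follows essentially the same route as the paper's: the paper also factors $\mathcal{P}$ as $\mathcal{P}_{\alpha_{i_1}}\cdots\mathcal{P}_{\alpha_{i_r}}$ and runs an induction whose two inductive claims are exactly your splitting identity for each partial product and the divergence $\langle\mathcal{P}_{\alpha_{i_s}}\cdots\mathcal{P}_{\alpha_{i_r}}(\eta)(L),\alpha^{\vee}\rangle\to+\infty$ for every simple root $\alpha$. Your write-up is in fact more complete than the paper's, since you supply the single-root estimate via Lemma \ref{lemma_restrict} and justify the preservation of the drift via Lemma \ref{Lem_rec}, both of which the paper leaves implicit.
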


\begin{proof}
Recall that {$\mathcal{P}$}$=${$\mathcal{P}$}$_{\alpha_{i_{1}}}\cdots
${$\mathcal{P}$}$_{\alpha_{i_{r}}}$. One proves by induction that for any
$s=1,\ldots,r$, there exists a nonnegative integer $L_{s}$ such that
\[
{\mathcal{P}}_{\alpha_{i_{s}}}\cdots{\mathcal{P}}_{\alpha_{i_{r}}}(\eta
_{L}\otimes\pi)={\mathcal{P}}_{\alpha_{i_{s}}}\cdots{\mathcal{P}}%
_{\alpha_{i_{r}}}(\eta_{L})\otimes\pi
\]
for any $L>L_{s}$ and $\underset{L\rightarrow+\infty}{\lim}\langle
${$\mathcal{P}$}$_{\alpha_{i_{s}}}\cdots${$\mathcal{P}$}$_{\alpha_{i_{r}}%
}(\eta)(L),\alpha^{\vee}\rangle=+\infty$ for any simple root $\alpha$. The
lemma then follows by putting $s=1$.
\end{proof}

\bigskip

{Let $\mathcal{H}=(\mathcal{H}_{\ell})_{\ell\geq1}$ be a random process in
$\Omega_{\mathcal{C}}\subset\Omega$ with distribution $\mathbb{Q}%
_{\overline{m}}$.\ Since $\mathcal{H}$ takes value in $\Omega$, we can write
$\mathcal{H}_{\ell}=T_{1}\otimes\cdots\otimes T_{\ell}$ for any $\ell\geq1$,
where the random variable $T_{i}$ takes values in $B(\pi_{\kappa})$ for any
$i\geq1$. By Corollary \ref{Cor-Iso-ProbaSpace}, there exists a random process
$\mathcal{W}$ with values in $\Omega$ and distribution $\mathbb{P}%
_{\overline{m}}$ such that $\mathcal{H}$ and $\mathcal{P}(\mathcal{W})$
coincide $\mathbb{P}_{\overline{m}}$-almost surely. Notice that we also have
$\mathcal{W}_{\ell}=X_{1}\otimes\cdots\otimes X_{\ell}$ for any $\ell\geq1$,
where $X_{\ell}$ is a random variable with values in $B(\pi_{\kappa})$ with
the law defined in (\ref{defX}). }

\begin{proposition}
\label{prop_lim}$\mathbb{P}_{\overline{m}}$-almost surely, the random
variables $T_{\ell}$ and $X_{\ell}$ coincide for any large enough $\ell$.
\end{proposition}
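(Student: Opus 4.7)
The plan is to deduce the statement from Lemma \ref{Lemma_Left}, which already says that a fixed suffix $\pi$ is left untouched by $\mathcal{P}$ once the prefix is long enough; the remaining work is to upgrade this to a threshold valid simultaneously for every possible next letter, and then to feed the random letter $X_{L+1}(\omega)$ back in.

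First, I will fix $\omega$ in the full-measure set on which $\mathcal{H} = \mathcal{P}(\mathcal{W})$ (by Corollary \ref{Cor-Iso-ProbaSpace}) and on which the LLN of Theorem \ref{ThLLN} holds, so that $\eta := \mathcal{W}(\omega)$ is typical in the sense of Lemma \ref{lemma_restrict}, that is
\[
\frac{1}{L}\langle \eta(L),\alpha_i^\vee\rangle \longrightarrow \langle \overline{m},\alpha_i^\vee\rangle > 0 \qquad \text{for every simple root } \alpha_i,
\]
the positivity being guaranteed by the assumption $\overline{m}\in\mathcal{D}_\kappa\subset\mathring{\mathcal{C}}$. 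Then, for each $\pi\in B(\pi_\kappa)$ (a finite set), Lemma \ref{Lemma_Left} applied to $\eta$ and this particular $\pi$ (with $\ell=1$) provides an integer $L_\pi(\omega)$ such that $\mathcal{P}(\eta_L\otimes\pi) = \mathcal{P}(\eta_L)\otimes\pi$ for every $L\geq L_\pi(\omega)$. Setting
\[
L_0(\omega) := \max_{\pi\in B(\pi_\kappa)} L_\pi(\omega) < +\infty
\]
yields a single threshold valid for every letter simultaneously.

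Finally, for each $L\geq L_0(\omega)$, specializing $\pi$ to the actual value $X_{L+1}(\omega)\in B(\pi_\kappa)$ gives
\[
\mathcal{H}_{L+1}(\omega) = \mathcal{P}(\mathcal{W}_{L+1}(\omega)) = \mathcal{P}\bigl(\eta_L\otimes X_{L+1}(\omega)\bigr) = \mathcal{P}(\eta_L)\otimes X_{L+1}(\omega) = \mathcal{H}_L(\omega)\otimes X_{L+1}(\omega).
\]
Comparing with the factorization $\mathcal{H}_{L+1}(\omega) = T_1(\omega)\otimes\cdots\otimes T_{L+1}(\omega)$ and using the uniqueness of the decomposition of a concatenated path into its elementary building blocks forces $T_{L+1}(\omega) = X_{L+1}(\omega)$. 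Iterating in $L$, one obtains $T_\ell(\omega) = X_\ell(\omega)$ for every $\ell > L_0(\omega)$, which is the desired almost sure statement.

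The only mildly subtle point is the uniformization step producing $L_0(\omega)$: one has to check that for the fixed typical trajectory $\eta$ and each fixed $\pi$ the threshold $L_\pi(\omega)$ produced by Lemma \ref{Lemma_Left} is a genuine finite integer. This is built into its proof, which proceeds by induction along the decomposition $\mathcal{P} = \mathcal{P}_{\alpha_{i_1}}\cdots\mathcal{P}_{\alpha_{i_r}}$ and uses at each step the stabilization guaranteed by Lemma \ref{lemma_restrict} under the typicality of $\eta$. Once the finiteness of each $L_\pi(\omega)$ is in hand, taking the maximum over the finite alphabet $B(\pi_\kappa)$ is automatic and everything else is a sequence of substitutions.
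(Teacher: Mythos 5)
Your proof is correct and follows essentially the same route as the paper: fix a typical trajectory, apply Lemma \ref{Lemma_Left} with the one-letter suffix, and read off $T_{\ell}=X_{\ell}$ from the resulting factorization of $\mathcal{H}_{\ell}$. Your explicit uniformization of the threshold over the finite alphabet $B(\pi_{\kappa})$ is a welcome refinement --- the paper applies the lemma directly to the varying suffix $\pi_{\ell}$ and leaves this point implicit --- but it does not change the substance of the argument.
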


\begin{proof}
Consider a trajectory $\omega\in\Omega^{\mathrm{typ}}$. For any $\ell\geq1$
and set $\Pi_{\ell}(\omega)=\pi_{1}\otimes\cdots\otimes\pi_{\ell}$. We can
apply Lemma \ref{Lemma_Left} to $\pi_{1}\otimes\cdots\otimes\pi_{\ell
-1}\otimes\pi_{\ell}$ since we have $\omega\in\Omega^{\mathrm{typ}}$.\ Hence,
for $\ell$ sufficiently large, we have
\[
\mathcal{P}(\pi_{1}\otimes\cdots\otimes\pi_{\ell-1}\otimes\pi_{\ell
})=\mathcal{P}(\pi_{1}\otimes\cdots\otimes\pi_{\ell-1})\otimes\pi_{\ell}.
\]
We thus have $\lim_{\ell\rightarrow+\infty}(T_{\ell}-X_{\ell})=0$ on
$\Omega^{\mathrm{typ}}$. We are done since $\mathbb{P}_{\overline{m}}%
(\Omega^{\mathrm{typ}})=1$.
\end{proof}

\subsection{The transformations $\mathcal{P}$ and $\mathcal{E}$ on infinite
paths}

The transformations $\mathcal{P}$ and $\mathcal{E}$ defined on $B(\pi_{\kappa
})^{\otimes\ell}$ can be extended to $\Omega$ and $\Omega_{\mathcal{C}%
}^{\mathrm{typ}}$, respectively. For any $\eta\in\Omega$ and any simple root
$\alpha$, set%
\[
{\mathcal{P}}_{\alpha}(\eta)(t)=\eta(t)-\inf_{s\in\lbrack0,t]}\langle
\eta(s),\alpha^{\vee}\rangle\alpha\quad\text{and}\quad{\mathcal{P}}%
(\eta)={\mathcal{P}}_{\alpha_{i_{1}}}\cdots{\mathcal{P}}_{\alpha_{i_{r}}}%
(\eta).
\]
Similarly, for any $\eta\in\Omega$ and any simple root $\alpha$ such that
$\lim_{t\rightarrow\infty}\langle\eta(t),\alpha^{\vee}\rangle=+\infty$, the
path ${\mathcal{E}}_{\alpha}(\eta)$ such that%
\[
{\mathcal{E}}_{\alpha}(\eta)(t)=\eta(t)-\inf_{s\in\lbrack t,+\infty\lbrack
}\langle\eta(s),\alpha^{\vee}\rangle\alpha+\inf_{s\in\lbrack0,+\infty\lbrack
}\langle\eta(s),\alpha^{\vee}\rangle\alpha
\]
for any $t\geq0$ is well defined.

\begin{proposition}
Consider $\eta$ in $\Omega_{\mathcal{C}}^{\mathrm{typ}}$.\ Then ${\mathcal{E}%
}(\eta)={\mathcal{E}}_{\alpha_{i_{1}}}\cdots{\mathcal{E}}_{\alpha_{i_{r}}%
}(\eta)$ is well defined and belongs to $\Omega^{\iota\mathrm{typ}}$.
\end{proposition}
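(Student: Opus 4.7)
My plan is to proceed by reverse induction on $s = r+1, r, \dots, 1$, setting $\eta^{(s)} := \mathcal{E}_{\alpha_{i_s}}\cdots\mathcal{E}_{\alpha_{i_r}}(\eta)$ (with $\eta^{(r+1)} := \eta$), and establishing that $\eta^{(s)}$ is well-defined with $\eta^{(s)}(t)/t \to \overline{m}^{(s)} := s_{i_s}\cdots s_{i_r}(\overline{m})$ as $t \to +\infty$. The base case is immediate from the hypothesis $\eta \in \Omega_{\mathcal{C}}^{\mathrm{typ}}$, which forces $\overline{m} \in \mathring{\mathcal{C}}$. For the induction step I first verify the positivity condition needed to apply $\mathcal{E}_{\alpha_{i_s}}$, namely $\lim_{t \to +\infty}\langle \eta^{(s+1)}(t), \alpha_{i_s}^\vee\rangle = +\infty$. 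By the induction hypothesis this reduces to $\langle \overline{m}^{(s+1)}, \alpha_{i_s}^\vee\rangle > 0$, which equals $\langle \overline{m}, s_{i_r}\cdots s_{i_{s+1}}(\alpha_{i_s}^\vee)\rangle$ by ${\mathsf W}$-invariance of the scalar product. By (\ref{R+w0}) the vector $s_{i_r}\cdots s_{i_{s+1}}(\alpha_{i_s})$ is a positive root, so this pairing is strictly positive since $\overline{m} \in \mathring{\mathcal{C}}$.

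The heart of the inductive step is the following drift-transformation claim for a single operator: if $\eta' \in \Omega$ satisfies $\eta'(t)/t \to \overline{m}'$ with $\langle \overline{m}', \alpha^\vee\rangle > 0$, then $\mathcal{E}_\alpha(\eta')(t)/t \to s_\alpha(\overline{m}')$. To prove it I use the two-sided linear bound $s(\langle \overline{m}', \alpha^\vee\rangle - \epsilon) \leq \langle \eta'(s), \alpha^\vee\rangle \leq s(\langle \overline{m}', \alpha^\vee\rangle + \epsilon)$ valid for $s \geq T_\epsilon$, together with the trivial inequality $\inf_{u \geq t}\langle\eta'(u), \alpha^\vee\rangle \leq \langle \eta'(t), \alpha^\vee\rangle$, to conclude that $\tfrac{1}{t}\inf_{u \geq t}\langle \eta'(u), \alpha^\vee\rangle \to \langle \overline{m}', \alpha^\vee\rangle$. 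Since the other infimum $\inf_{u \geq 0}\langle \eta'(u), \alpha^\vee\rangle$ in the definition of $\mathcal{E}_\alpha$ is a finite constant, substitution yields $\mathcal{E}_\alpha(\eta')(t)/t \to \overline{m}' - \langle \overline{m}', \alpha^\vee\rangle \alpha = s_\alpha(\overline{m}')$. Applying this with $\eta' = \eta^{(s+1)}$ completes the induction step.

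When $s = 1$, the final drift is $s_{i_1}\cdots s_{i_r}(\overline{m}) = w_0(\overline{m}) \in -\mathring{\mathcal{C}}$, placing $\mathcal{E}(\eta)$ in $\Omega^{\iota\mathrm{typ}}$ once one checks $\mathcal{E}(\eta) \in \Omega$. This last membership mirrors Lemma \ref{Lemma_Left}: at each stage $\eta^{(s)}$ grows linearly in the direction $\alpha_{i_{s-1}}^\vee$, so every infimum encountered is attained on a bounded initial segment, and therefore for every integer $N$ one can find $L \geq N$ with $\mathcal{E}(\eta)|_{[0,N]} = \mathcal{E}(\Pi_L \eta)|_{[0,N]}$; since $\mathcal{E}(\Pi_L \eta) \in B(\pi_\kappa)^{\otimes L}$, these consistent restrictions patch together into a genuine element of $\Omega = B(\pi_\kappa)^{\otimes \mathbb{Z}_{\geq 0}}$. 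The main obstacle is precisely the drift-transformation claim above: unlike $\mathcal{P}_\alpha$ in Lemma \ref{Lem_rec}(i), where the correction stays bounded, here the correction $-\inf_{u \geq t}\langle\eta'(u), \alpha^\vee\rangle \alpha$ grows linearly in $t$ and must be tracked precisely using the assumed rate of convergence $\eta'(t)/t \to \overline{m}'$.
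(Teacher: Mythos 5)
Your proof is correct, but it takes a genuinely different route from the paper's at the key step. The paper exploits the fact that, since $\eta\in\Omega_{\mathcal{C}}^{\mathrm{typ}}$, each truncation $\Pi_{\ell}(\eta)$ is a \emph{highest weight} path; the identity (\ref{PEHW}) then gives the exact equality ${\mathcal{E}}_{\alpha_{i_{a+1}}}\cdots{\mathcal{E}}_{\alpha_{i_{r}}}\eta(\ell)=s_{i_{a+1}}\cdots s_{i_{r}}(\eta(\ell))$ at integer times, after which ${\mathsf{W}}$-invariance of the pairing and (\ref{R+w0}) yield the positivity needed for the next operator, and comparison of highest and lowest weights gives the exact relation $\mathcal{E}(\eta)(\ell)=w_{0}(\eta(\ell))$, hence membership in $\Omega^{\iota\mathrm{typ}}$. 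You instead avoid the crystal-theoretic identity entirely and prove an analytic substitute: a squeeze argument showing $\frac{1}{t}\inf_{u\geq t}\langle\eta'(u),\alpha^{\vee}\rangle\rightarrow\langle\overline{m}',\alpha^{\vee}\rangle$, whence $\mathcal{E}_{\alpha}$ conjugates the drift by $s_{\alpha}$; iterating and invoking (\ref{R+w0}) in the same way gives the drift $w_{0}(\overline{m})$. What your approach buys is generality --- it only uses $\eta(t)/t\rightarrow\overline{m}\in\mathring{\mathcal{C}}$, so it would apply verbatim to any trajectory of $\Omega^{\mathrm{typ}}$, not just those staying in $\mathcal{C}$ --- and it makes explicit the membership $\mathcal{E}(\eta)\in\Omega$ via the stabilization-of-infima patching argument, a point the paper leaves implicit. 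What you lose is the exact identity $\mathcal{E}(\eta)(\ell)=w_{0}(\eta(\ell))$ (you only obtain the asymptotic version, which suffices for the stated conclusion). Two small points to tighten: the definition of $\Omega_{\mathcal{C}}^{\mathrm{typ}}$ only gives convergence of $\frac{1}{\ell}\langle\eta(\ell),\alpha_{i}^{\vee}\rangle$ along integers, so your base case should note that the simple coroots span and that $\sup_{t\in[\ell,\ell+1]}\Vert\eta(t)-\eta(\ell)\Vert\leq L$ to pass to $\eta(t)/t\rightarrow\overline{m}$; and in the squeeze you should take $\epsilon<\langle\overline{m}',\alpha^{\vee}\rangle$ so that the lower linear bound is increasing in $u$.
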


\begin{proof}
We proceed by induction and show that ${\mathcal{E}}(\eta)={\mathcal{E}%
}_{\alpha_{i_{a}}}\cdots{\mathcal{E}}_{\alpha_{i_{r}}}(\eta)$ is well-defined
for any $a=1,\ldots,r$.\ It suffices to prove that
\[
\lim_{t\rightarrow\infty}\langle\eta(t),\alpha_{r}\rangle=+\infty\text{ and
}\lim_{t\rightarrow\infty}\langle{\mathcal{E}}_{\alpha_{i_{a+1}}}%
\cdots{\mathcal{E}}_{\alpha_{i_{r}}}\eta(t),\alpha_{a}\rangle=+\infty
\]
for any $a=1,\ldots r-1$. We get $\lim_{t\rightarrow\infty}\langle
\eta(t),\alpha_{r}\rangle=+\infty$ directly from the definition of
$\Omega_{\mathcal{C}}^{\mathrm{typ}}$.\ Now for any $a=1,\ldots,r-1$, and any
integer $\ell\geq0$, we have that ${\mathcal{E}}_{\alpha_{i_{a+1}}}%
\cdots{\mathcal{E}}_{\alpha_{i_{r}}}\eta(\ell)$ is the weight of the path
$\Pi_{\ell}(\eta)$.$\ $So we obtain by (\ref{PEHW})%
\[
\langle{\mathcal{E}}_{\alpha_{i_{a+1}}}\cdots{\mathcal{E}}_{\alpha_{i_{r}}%
}\eta(\ell),\alpha_{a}\rangle=\langle s_{i_{a+1}}\cdots s_{i_{r}}\eta
(\ell),\alpha_{a}\rangle=\langle\eta(\ell),s_{i_{r}}\cdots s_{i_{a+1}}%
(\alpha_{a})\rangle.
\]
Since $w_{0}$ is an involution, $w_{0}=s_{i_{r}}\cdots s_{i_{1}}$ is also a
minimal length decomposition. By (\ref{R+w0}), we know that $s_{i_{r}}\cdots
s_{i_{a+1}}(\alpha_{a})=\alpha$ is a positive root. It follows that
\[
\lim_{\ell\rightarrow\infty}\langle\eta(\ell),s_{i_{r}}\cdots s_{i_{a+1}%
}(\alpha_{a})\rangle=\lim_{\ell\rightarrow\infty}\langle{\mathcal{E}}%
_{\alpha_{i_{a+1}}}\cdots{\mathcal{E}}_{\alpha_{i_{r}}}\eta(\ell),\alpha
_{a}\rangle=+\infty.
\]
We finally get $\lim_{t\rightarrow\infty}\langle{\mathcal{E}}_{\alpha
_{i_{a+1}}}\cdots{\mathcal{E}}_{\alpha_{i_{r}}}\eta(t),\alpha_{a}%
\rangle=+\infty$ because
\[
\left\Vert {\mathcal{E}}_{\alpha_{i_{a+1}}}\cdots{\mathcal{E}}_{\alpha_{i_{r}%
}}(\eta(t))-{\mathcal{E}}_{\alpha_{i_{a+1}}}\cdots{\mathcal{E}}_{\alpha
_{i_{r}}}(\eta(\ell))\right\Vert \text{ with }\ell=\left\lfloor
t\right\rfloor
\]
is bounded by the common length of the elementary paths of $B(\pi_{\kappa})$,
uniformly in $\ell$. This proves that ${\mathcal{E}}(\eta)$ is
well-defined.\ Since $\eta\in\Omega_{\mathcal{C}}^{\mathrm{typ}}$, the path
$\eta_{\ell}=\Pi_{\ell}(\eta)$ is of highest weight.\ Thus, the path
$\mathcal{E}(\eta_{\ell})$ is of lowest weight.\ Comparing their weights, we
get $\mathcal{E}(\eta)(\ell)=w_{0}(\eta(\ell))$ which implies that
$\mathcal{E}(\eta)\in\Omega^{\iota\mathrm{typ}}$.
\end{proof}

\bigskip

Observe we have ${\mathcal{P}}(\eta)=\lim_{\ell\rightarrow+\infty}%
{\mathcal{P}}(\eta_{\ell})$ and ${\mathcal{E}}(\eta)=\lim_{\ell\rightarrow
+\infty}{\mathcal{E}}(\eta_{\ell})$ where $\eta_{\ell}=\Pi_{\ell}(\eta)$.

\subsection{Composition of the transformations $\mathcal{P}$ and $\mathcal{E}%
$}

Consider $\pi\in B(\pi_{\kappa})^{\otimes\ell},\eta\in\Omega_{\mathcal{C}%
}^{\mathrm{typ}}$ and $\xi\in\Omega^{\iota\mathrm{typ}}$. For any positive
integer $L$, set $\Pi_{L}(\eta)=\eta_{L}$ and $\Pi_{L}(\xi)=\xi_{L}$.

\begin{lemma}
\label{lemmaRight}With the above notation we have for $L$ sufficiently large

\begin{enumerate}
\item $\mathcal{PE}(\pi\otimes\eta_{L})=\pi\otimes\eta_{L}$ when $\pi
\otimes\eta_{L}$ is a highest weight path,

\item $\mathcal{EP}(\pi\otimes\xi_{L})=\pi\otimes\mathcal{E}(\xi_{L}).$
\end{enumerate}
\end{lemma}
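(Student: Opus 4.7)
For part (1), I would rely only on crystal-theoretic properties. Since $\mathcal{E}(\pi \otimes \eta_L)$ is by definition the unique lowest weight vertex of the connected component $B(\pi \otimes \eta_L)$, it belongs to this component, so applying $\mathcal{P}$ then returns the unique highest weight vertex of $B(\pi \otimes \eta_L)$, which by hypothesis is $\pi \otimes \eta_L$ itself. This already yields $\mathcal{PE}(\pi \otimes \eta_L) = \pi \otimes \eta_L$, independently of any largeness assumption on $L$.

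For part (2), my plan is first to reduce to proving $\mathcal{E}(\pi \otimes \xi_L) = \pi \otimes \mathcal{E}(\xi_L)$ for $L$ large: by the same crystal argument, $\mathcal{P}(b)$ and $b$ lie in the same connected component, so $\mathcal{E}(\mathcal{P}(b)) = \mathcal{E}(b)$, giving $\mathcal{EP} = \mathcal{E}$. I will then exploit the factorization $\mathcal{E} = \mathcal{E}_{\alpha_{i_1}} \cdots \mathcal{E}_{\alpha_{i_r}}$ from Proposition \ref{Prop-BBO} and argue by descending induction on $s$ to establish
\[
\mathcal{E}_{\alpha_{i_s}} \cdots \mathcal{E}_{\alpha_{i_r}}(\pi \otimes \xi_L) = \pi \otimes \xi_L^{(s)}, \qquad \xi_L^{(s)} := \mathcal{E}_{\alpha_{i_s}} \cdots \mathcal{E}_{\alpha_{i_r}}(\xi_L),
\]
each step reducing to the single-operator claim $\mathcal{E}_\alpha(\pi \otimes \xi') = \pi \otimes \mathcal{E}_\alpha(\xi')$ for a finite path $\xi'$ of length $L$ with $\langle \xi'(L), \alpha^\vee\rangle \to -\infty$. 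This single-operator claim will follow directly from formula (\ref{def_Ealpha}) applied on the interval $[0, \ell + L]$: the boundedness of $\pi$ and the divergence of $\langle \xi'(L), \alpha^\vee\rangle$ force both $\inf_{s \in [0, \ell+L]}$ and $\inf_{s \in [t, \ell+L]}$ (for $t \in [0, \ell]$) of $\langle (\pi \otimes \xi')(s), \alpha^\vee\rangle$ to be realized in the $\xi'$ portion and to coincide up to the additive shift $\langle \pi(\ell), \alpha^\vee\rangle$, whence the correction terms cancel on the $\pi$ part and reproduce $\mathcal{E}_\alpha(\xi')$ on the $\xi'$ part.

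The main obstacle will be propagating the inductive hypothesis, namely verifying that $\langle \xi_L^{(s+1)}(L), \alpha_{i_s}^\vee\rangle \to -\infty$ as $L \to \infty$, so that the single-operator claim applies to $\mathcal{E}_{\alpha_{i_s}}$ acting on $\pi \otimes \xi_L^{(s+1)}$. My plan is to handle this by endpoint tracking: since $\xi \in \Omega^{\iota\mathrm{typ}}$ has drift $w_0(\overline{m})$, and a dual version of Lemma \ref{lemma_restrict} ensures that the infima appearing in each $\mathcal{E}_{\alpha_{i_k}}$ stabilize near the endpoint, the endpoint will satisfy $\xi_L^{(s+1)}(L)/L \to s_{i_{s+1}} \cdots s_{i_r}(w_0(\overline{m}))$. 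Consequently
\[
\frac{1}{L}\langle \xi_L^{(s+1)}(L), \alpha_{i_s}^\vee\rangle \longrightarrow \bigl\langle w_0(\overline{m}),\, (s_{i_r}\cdots s_{i_{s+1}}(\alpha_{i_s}))^\vee\bigr\rangle.
\]
Applying (\ref{R+w0}) to the reverse minimal decomposition $w_0 = s_{i_r}\cdots s_{i_1}$, legitimate because $w_0$ is an involution, the root $s_{i_r}\cdots s_{i_{s+1}}(\alpha_{i_s})$ is positive, and pairing it against the antidominant vector $w_0(\overline{m})$ yields a strictly negative limit. Thus $\langle \xi_L^{(s+1)}(L), \alpha_{i_s}^\vee\rangle \to -\infty$, the inductive step is licit, and iterating down to $s = 1$ delivers the desired identity.
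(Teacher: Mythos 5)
Part (1) of your proposal is correct and coincides with the paper's argument; you are also right that it needs no largeness assumption on $L$. For part (2), your reduction $\mathcal{EP}(b)=\mathcal{E}(b)$ (because $\mathcal{P}(b)\in B(b)$) is correct, and your plan to prove $\mathcal{E}(\pi\otimes\xi_L)=\pi\otimes\mathcal{E}(\xi_L)$ by factoring $\mathcal{E}=\mathcal{E}_{\alpha_{i_1}}\cdots\mathcal{E}_{\alpha_{i_r}}$ and performing a descending induction is a legitimate alternative to the paper's route: the paper instead conjugates by the Lusztig involution, writing $\mathcal{E}=\iota\mathcal{P}\iota$ (Proposition \ref{Lemma_rbeta}) so that the identity becomes $\mathcal{P}(\iota(\xi_L)\otimes\iota(\pi))=\mathcal{P}(\iota(\xi_L))\otimes\iota(\pi)$, which is exactly Lemma \ref{Lemma_Left}; that trick buys you the whole induction for free, whereas your approach redoes it by hand for $\mathcal{E}$. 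Your single-operator computation from (\ref{def_Ealpha}) is sound.

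There is, however, a false intermediate claim in your propagation step. For a finite path $\zeta$ of length $L$ whose pairing $\langle\zeta(\cdot),\alpha^{\vee}\rangle$ tends to $-\infty$ linearly, evaluating (\ref{def_Ealpha}) at $t=L$ gives $\mathcal{E}_{\alpha}(\zeta)(L)=s_{\alpha}(\zeta(L))+\inf_{s\in[0,L]}\langle\zeta(s),\alpha^{\vee}\rangle\,\alpha$; since that infimum equals $\langle\zeta(L),\alpha^{\vee}\rangle+O(1)$ in this regime (this is precisely what your ``dual of Lemma \ref{lemma_restrict}'' says), the reflection and the correction term cancel to leading order and the endpoint drift is \emph{preserved}: $\mathcal{E}_{\alpha}(\zeta)(L)/L\to v$, not $s_{\alpha}(v)$. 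Equivalently, $\mathcal{E}_{\alpha}=r\mathcal{P}_{\alpha}r$ and $r(\xi_L)$ has dominant drift, so by the mechanism of Lemma \ref{Lem_rec} the paths $\xi_L^{(s+1)}$ stay within a distance of $\xi_L$ that is bounded uniformly in $L$. The reflected-endpoint formula $s_{i_{s+1}}\cdots s_{i_r}(\cdot)$ you assert is (\ref{PEHW}), which holds for highest weight paths, where $\inf_{[0,L]}$ is $0$ rather than comparable to the endpoint value --- the opposite regime. The error happens to be harmless here: the correct limit is $w_0(\overline{m})$, which is strictly antidominant, so $\frac{1}{L}\langle\xi_L^{(s+1)}(L),\alpha_{i_s}^{\vee}\rangle\to\langle w_0(\overline{m}),\alpha_{i_s}^{\vee}\rangle<0$ directly and your induction still closes, without any appeal to (\ref{R+w0}). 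You should replace the endpoint-tracking paragraph by this (simpler and correct) statement.
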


\begin{proof}
1: Since $\pi\otimes\eta_{L}$ is a highest weight path, $\mathcal{E}%
(\pi\otimes\eta_{L})$ is the lowest weight path of $B(\pi\otimes\eta_{L})$,
the crystal associated to $\pi\otimes\eta_{L}$. Therefore $\mathcal{PE}%
(\pi\otimes\eta_{L})=\pi\otimes\eta_{L}$ is the highest weight path of
$B(\pi\otimes\eta_{L})$.

2: Since $\xi\in\Omega^{\iota\mathrm{typ}}$, we have for any $i=1,\ldots,n,$
$\underset{L\rightarrow+\infty}{\lim}\langle\xi_{L}(L),\alpha_{i}^{\vee
}\rangle=-\infty$. We get by (\ref{iotaweight})%
\[
\langle\iota(\xi_{L})(L),\alpha_{i}^{\vee}\rangle=\langle w_{0}(\xi
_{L}(L)),\alpha_{i}^{\vee}\rangle=\langle\xi_{L}(L),w_{0}(\alpha_{i}^{\vee
})\rangle=-\langle\xi_{L}(L),\alpha_{i^{\ast}}^{\vee}\rangle
\]
for any $i=1,\ldots,n$. So $\underset{L\rightarrow+\infty}{\lim}\langle
\iota(\xi_{L})(L),\alpha_{i}^{\vee}\rangle=+\infty$ for any $i=1,\ldots,n$.
Recall the $\iota\mathcal{P}=\mathcal{E}\iota$ and $\iota\mathcal{E}%
=\mathcal{P}\iota$ by Lemma \ref{Lemma_rbeta}. We have the equivalencies
\[
\mathcal{E}(\pi\otimes\xi_{L})=\pi\otimes\mathcal{E}(\xi_{L}%
)\Longleftrightarrow\iota\mathcal{E}(\pi\otimes\xi_{L})=\iota(\pi
\otimes\mathcal{E}(\xi_{L}))\Longleftrightarrow\mathcal{P}(\iota(\xi
_{L})\otimes\iota(\pi))=\mathcal{P}(\iota(\xi_{L}))\otimes\iota(\pi).
\]
But the last equality hold by Lemma \ref{Lemma_Left} for $L$ sufficiently
large. This proves that $\mathcal{E}(\pi\otimes\xi_{L})=\pi\otimes
\mathcal{E}(\xi_{L})$ for $L$ sufficiently large. Now, observe that
$\pi\otimes\xi_{L}$ and $\mathcal{E}(\pi\otimes\xi_{L})=\pi\otimes
\mathcal{E}(\xi_{L})$ both belong to the crystal $B(\pi\otimes\xi_{L})$.\ In
this crystal the transforms $\mathcal{P}$ and $\mathcal{E}$ return the highest
and lowest paths, respectively. Therefore, we have $\mathcal{EP}(\pi\otimes
\xi_{L})=\mathcal{EP}(\pi\otimes\mathcal{E}(\xi_{L}))$.\ But $\pi
\otimes\mathcal{E}(\xi_{L})=\mathcal{E}(\pi\otimes\xi_{L})$ is the lowest path
of $B(\pi\otimes\xi_{L})$. This implies that $\mathcal{EP}(\pi\otimes\xi
_{L})=\pi\otimes\mathcal{E}(\xi_{L})$ for $L$ sufficiently large as desired.
\end{proof}

\begin{theorem}
\ \label{Th_PE}

\begin{enumerate}
\item For any $\eta\in\Omega_{\mathcal{C}}^{\mathrm{typ}}$, we have
$\mathcal{PE}(\eta)=\eta$.

\item For any $\xi\in\Omega^{\iota\mathrm{typ}}$, we have $\mathcal{EP}%
(\xi)=\xi$.
\end{enumerate}
\end{theorem}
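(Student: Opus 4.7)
The plan is to reduce both identities to their finite-path analogues and then pass to the limit, using the typicality of the trajectories to control every infimum that enters the definitions of $\mathcal{P}_\alpha$ and $\mathcal{E}_\alpha$.

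For (1), let $\eta\in\Omega_{\mathcal{C}}^{\mathrm{typ}}$. Every prefix $\eta_L=\Pi_L(\eta)$ has image in $\mathcal{C}$, so by Proposition \ref{Prop_HP}(iii) it is a highest-weight vertex of $B(\pi_\kappa)^{\otimes L}$. In its connected component, $\mathcal{E}(\eta_L)$ is by definition the unique lowest-weight vertex, and $\mathcal{P}$ applied to it returns the unique highest-weight vertex, so $\mathcal{PE}(\eta_L)=\eta_L$ for every $L$ (this is also a direct consequence of Lemma \ref{lemmaRight}(1) with $\pi$ the empty path). To promote this to the infinite statement, I fix $t>0$ and seek $L_0=L_0(\eta,t)$ such that $\mathcal{PE}(\eta)(s)=\mathcal{PE}(\eta_L)(s)$ for every $s\in[0,t]$ and $L\geq L_0$. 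This is verified by going through the $r$-fold product $\mathcal{P}_{\alpha_{i_1}}\cdots\mathcal{P}_{\alpha_{i_r}}\mathcal{E}_{\alpha_{i_1}}\cdots\mathcal{E}_{\alpha_{i_r}}$ one factor at a time, checking at each stage that the infima $\inf_{u\in[s,\infty[}\langle\cdot,\alpha^\vee\rangle$ appearing in $\mathcal{E}_\alpha$ and the infima $\inf_{u\in[0,s]}\langle\cdot,\alpha^\vee\rangle$ appearing in $\mathcal{P}_\alpha$ are attained on a bounded horizon; this is essentially the content of Lemma \ref{lemma_restrict} (and its time-reversed analog for $\mathcal{E}_\alpha$), combined with Lemma \ref{Lem_rec} to guarantee that the intermediate paths remain typical in the appropriate direction. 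Combining this with $\mathcal{PE}(\eta_L)=\eta_L$ yields $\mathcal{PE}(\eta)(s)=\eta(s)$ on $[0,t]$, and since $t$ is arbitrary, $\mathcal{PE}(\eta)=\eta$.

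For (2), let $\xi\in\Omega^{\iota\mathrm{typ}}$ and fix $\ell\geq 1$. Write $\pi=\xi_\ell$ and $\xi'=S^\ell(\xi)$; since the shift preserves the drift $w_0(\overline m)$, one has $\xi'\in\Omega^{\iota\mathrm{typ}}$, and moreover $\xi_{\ell+L}=\pi\otimes\xi'_L$ for every $L\geq 0$. Lemma \ref{lemmaRight}(2) applied to $\pi$ and $\xi'$ then yields, for $L$ sufficiently large, the identity $\mathcal{EP}(\xi_{\ell+L})=\pi\otimes\mathcal{E}(\xi'_L)$, whose right-hand side equals $\xi(t)$ at every $t\in[0,\ell]$. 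A stabilization argument identical in spirit to the one used in (1) — now invoked once for the inner $\mathcal{P}$ acting on $\xi\in\Omega^{\iota\mathrm{typ}}$ and once for the outer $\mathcal{E}$ acting on $\mathcal{P}(\xi)\in\Omega_{\mathcal{C}}^{\mathrm{typ}}$ — gives $\mathcal{EP}(\xi)(t)=\mathcal{EP}(\xi_{\ell+L})(t)$ on $[0,\ell]$ for $L$ large. Hence $\mathcal{EP}(\xi)(t)=\xi(t)$ for $t\in[0,\ell]$, and since $\ell$ is arbitrary, $\mathcal{EP}(\xi)=\xi$.

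The main obstacle in both parts is the stabilization step: one must verify rigorously, going through the $r$-fold composition one simple reflection at a time, that each intermediate path stays typical in the sense of having the relevant infima attained on a bounded horizon, so that every application of $\mathcal{P}_\alpha$ or $\mathcal{E}_\alpha$ to the infinite path coincides, on the prescribed finite interval, with its application to a sufficiently long finite prefix. Once this bookkeeping is in place, both identities follow immediately from their finite-path counterparts provided by Lemma \ref{lemmaRight}.
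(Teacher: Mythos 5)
Your proposal is correct and follows essentially the same route as the paper: both reduce the identities to their finite-prefix counterparts via Lemma \ref{lemmaRight} (the highest/lowest weight observation for part 1, the decomposition $\Pi_{L}(\xi)=\Pi_{\ell}(\xi)\otimes\xi_{L}$ for part 2) and then pass to the limit using the fact that, on typical trajectories, $\mathcal{P}$ and $\mathcal{E}$ of the infinite path agree on $[0,\ell]$ with their values on sufficiently long finite prefixes. Your explicit stabilization step is exactly what the paper compresses into the remark ``$\mathcal{P}(\eta)=\lim_{\ell}\mathcal{P}(\eta_{\ell})$ and $\mathcal{E}(\eta)=\lim_{\ell}\mathcal{E}(\eta_{\ell})$'' preceding the theorem.
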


\begin{proof}
Consider $\ell$ a positive integer.\ For any integer $L\geq\ell$ we can write
$\Pi_{L}(\eta)=\Pi_{\ell}(\eta)\otimes\eta_{L}$ and $\Pi_{L}(\xi)=\Pi_{\ell
}(\xi)\otimes\xi_{L}$ with $\eta_{L}$ and $\xi_{L}$ in $B(\pi_{\kappa
})^{\otimes L-\ell}$. Since $\eta\in\Omega_{\mathcal{C}}^{\mathrm{typ}}$ and
$\xi\in\Omega^{\iota\mathrm{typ}}$, we have for any simple root $\alpha_{i}$,
\[
\underset{L\rightarrow+\infty}{\lim}\langle\eta_{L}(L),\alpha_{i}^{\vee
}\rangle=+\infty\text{ and }\underset{L\rightarrow+\infty}{\lim}\langle\xi
_{L}(L),\alpha_{i}^{\vee}\rangle=-\infty.
\]
So by applying Lemma \ref{lemmaRight}, we get for $L$ sufficiently large
(depending on $\ell$)%
\[
\mathcal{PE}(\Pi_{L}(\eta))=\Pi_{\ell}(\eta)\otimes\eta_{L}\text{ and
}\mathcal{EP}(\Pi_{L}(\xi))=\Pi_{\ell}(\xi)\otimes\mathcal{E(}\xi_{L})
\]
for any $\ell\leq L$.\ This shows that $\mathcal{PE}(\eta)=\eta$ and
$\mathcal{EP}(\xi)=\xi$ by taking the limit when $\ell$ tends to infinity.
\end{proof}

\begin{remark}
It is possible to state a slightly stronger statement of the previous theorem
where $\Omega$ is replaced by $\mathcal{L}_{\infty}$ (see \S \
\ref{subsecLusztig}) in the definition of $\Omega_{\mathcal{C}}^{\mathrm{typ}%
}$ and $\Omega^{\iota\mathrm{typ}}$.
\end{remark}

Write $\mathcal{W}^{\iota}=Y_{1}\otimes Y_{2}\cdots$ the dual random path with
drift $\iota(\overline{m})$. The following Theorem shows that the
transformation $\mathcal{E}$ defined on $\Omega_{\mathcal{C}}^{\mathrm{typ}}$
can be regarded as the inverse of the generalized Pitman transform
$\mathcal{P}$. Recall that for both random trajectories $\mathcal{W}^{\iota}$
and $\mathcal{W}$, we have $\mathcal{H}=\mathcal{P}(\mathcal{W})=\mathcal{P}%
(\mathcal{W}^{\iota})$.

\pagebreak

\begin{theorem}
Assume $\overline{m}\in\mathcal{D}_{\kappa}$. Then we have

\begin{enumerate}
\item[(i)] $\mathcal{EP(W}^{\iota})=\mathcal{W}^{\iota}$ $\mathbb{P}^{\iota}%
$-almost surely,

\item[(ii)] We have $\mathcal{E}(\mathcal{H})=Y_{1}\otimes Y_{2}\otimes\cdots$
where the sequence of random variable $(Y_{\ell})_{\ell\geq1}$ is i.i.d. and
each variable $Y_{\ell},\ell\geq1$ has law $Y$ as defined in (\ref{def_p_iota}).

\item[(iii)] $\mathcal{PE}(\mathcal{H)}=\mathcal{H}$ $\mathbb{Q}$-almost surely.
\end{enumerate}
\end{theorem}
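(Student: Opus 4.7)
The overall strategy is to reduce all three assertions to Theorem~\ref{Th_PE}, using the Law of Large Numbers to place each random path in the appropriate set of typical trajectories. The underlying deterministic result (Theorem~\ref{Th_PE}) already gives pointwise identities $\mathcal{PE}(\eta)=\eta$ on $\Omega_{\mathcal{C}}^{\mathrm{typ}}$ and $\mathcal{EP}(\xi)=\xi$ on $\Omega^{\iota\mathrm{typ}}$; the content of the theorem under consideration is that the random processes $\mathcal{W}^{\iota}$, $\mathcal{H}$ and $\mathcal{E}(\mathcal{H})$ fall in the right sets with probability one.

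For assertion (i), I first observe that the random path $\mathcal{W}^{\iota}$ has drift $\overline{m}^{\iota}=w_{0}(\overline{m})$. Since $\overline{m}\in\mathcal{D}_{\kappa}=\mathcal{M}_{\kappa}\cap\mathring{\mathcal{C}}$, we have $\overline{m}^{\iota}\in-\mathring{\mathcal{C}}$, so $\langle\overline{m}^{\iota},\alpha_{i}^{\vee}\rangle<0$ for every simple root $\alpha_{i}$. Applying the Law of Large Numbers for $\mathcal{W}^{\iota}$ (the exact analogue of Theorem~\ref{ThLLN} obtained by replacing $p$ by $p^{\iota}$) gives $\mathcal{W}^{\iota}\in\Omega^{\iota\mathrm{typ}}$ with $\mathbb{P}^{\iota}$-probability one. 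Assertion~(2) of Theorem~\ref{Th_PE} then yields $\mathcal{EP}(\mathcal{W}^{\iota})=\mathcal{W}^{\iota}$ $\mathbb{P}^{\iota}$-almost surely. Assertion~(iii) is treated similarly: by Theorem~\ref{ThLLN P} applied to $\mathcal{H}$ we have $\mathcal{H}\in\Omega_{\mathcal{C}}^{\mathrm{typ}}$ $\mathbb{Q}$-almost surely (since $\mathcal{H}$ remains in $\mathcal{C}$ by definition and has drift $\overline{m}\in\mathring{\mathcal{C}}$), and assertion~(1) of Theorem~\ref{Th_PE} gives $\mathcal{PE}(\mathcal{H})=\mathcal{H}$ $\mathbb{Q}$-almost surely.

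For assertion (ii), I would argue by equality in law via a coupling. Set $\mathcal{H}^{\iota}:=\mathcal{P}(\mathcal{W}^{\iota})$. By Theorem~\ref{Th_LawH copy(1)}(iii) combined with Theorem~\ref{Th_LawH}, the processes $\mathcal{H}$ and $\mathcal{H}^{\iota}$ have the same law on $\Omega_{\mathcal{C}}$, namely $\mathbb{Q}_{\overline{m}}$. Since $\mathcal{E}$ is a Borel-measurable transformation of infinite trajectories (it is the pointwise limit of the continuous finite-horizon transforms $\mathcal{E}_{\alpha_{i_{1}}}\cdots\mathcal{E}_{\alpha_{i_{r}}}$ applied to $\Pi_{L}$-truncations, and the convergence has already been established on the full-measure set $\Omega_{\mathcal{C}}^{\mathrm{typ}}$), the images $\mathcal{E}(\mathcal{H})$ and $\mathcal{E}(\mathcal{H}^{\iota})$ have the same law. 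By (i), $\mathcal{E}(\mathcal{H}^{\iota})=\mathcal{E}\mathcal{P}(\mathcal{W}^{\iota})=\mathcal{W}^{\iota}$ $\mathbb{P}^{\iota}$-almost surely, and by construction $\mathcal{W}^{\iota}=Y_{1}\otimes Y_{2}\otimes\cdots$ with $Y_{\ell}$ i.i.d.\ of law $p^{\iota}$. Combining these identifications gives the desired description of the law of $\mathcal{E}(\mathcal{H})$.

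The main conceptual obstacle is that $\mathcal{E}$ is \emph{not} defined on all of $\Omega_{\mathcal{C}}$: its definition requires the infima $\inf_{s\in[t,+\infty)}\langle\eta(s),\alpha^{\vee}\rangle$ appearing in (\ref{def_Ealpha}) to be finite, which is why one must restrict to $\Omega_{\mathcal{C}}^{\mathrm{typ}}$. Thus before invoking Theorem~\ref{Th_PE} one must justify that $\mathcal{H}$ stays in $\Omega_{\mathcal{C}}^{\mathrm{typ}}$ and $\mathcal{W}^{\iota}$ stays in $\Omega^{\iota\mathrm{typ}}$, which is precisely the role played by the assumption $\overline{m}\in\mathcal{D}_{\kappa}$ (which ensures strict positivity of the limiting pairings $\langle\overline{m},\alpha_{i}^{\vee}\rangle$). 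A secondary minor point is to verify that the proof of Theorem~\ref{ThLLN} transfers verbatim to $\mathcal{W}^{\iota}$ with the new drift $w_{0}(\overline{m})$: this is immediate since the i.i.d.\ structure, boundedness of increments and Proposition~\ref{Prop_Length} all apply symmetrically to the distribution $p^{\iota}$.
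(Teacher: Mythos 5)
Your proof is correct and follows essentially the same route as the paper: all three assertions are reduced to Theorem~\ref{Th_PE} via the full-measure sets $\Omega^{\iota\mathrm{typ}}$ and $\Omega_{\mathcal{C}}^{\mathrm{typ}}$, and (ii) is obtained from the identification of $\mathcal{H}$ with $\mathcal{P}(\mathcal{W}^{\iota})$ combined with assertion (i). If anything, your treatment of (ii) is slightly more careful than the paper's, since you make explicit that $\mathcal{H}=\mathcal{P}(\mathcal{W}^{\iota})$ is an equality in law (the two processes live on different probability spaces) and that the measurability of $\mathcal{E}$ is what transfers this equality to the images.
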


\begin{proof}
(i) Write $\mathcal{W}^{\iota}=Y_{1}\otimes Y_{2}\cdots.\ $Since
$\mathbb{P}^{\iota}(\Omega^{\iota\mathrm{typ}})=1$, we get $\mathcal{EP(W}%
^{\iota})=\mathcal{W}^{\iota}$ $\mathbb{P}^{\iota}$-almost surely by Assertion
2 of Theorem \ref{Th_PE}. Since $\mathcal{P}(\mathcal{W}^{\iota}\mathcal{)=H}%
$, we have $\mathcal{E}(\mathcal{H})=\mathcal{EP(W}^{\iota})$.\ By assertion
(i), this means that $\mathcal{E}(\mathcal{H})=\mathcal{W}^{\iota}$ which
proves assertion (ii).

To obtain assertion (iii), it suffices to observe that $\mathcal{PE}%
(\mathcal{H)}=\mathcal{H}$ $\mathbb{Q}$-almost surely by Assertion 1 of
Theorem \ref{Th_PE} since we have $\mathbb{Q}(\Omega_{\mathcal{C}%
}^{\mathrm{typ}})=1$.
\end{proof}

\bigskip

\noindent{cedric.lecouvey@lmpt.univ-tours.fr}\newline%
{emmanuel.lesigne@lmpt.univ-tours.fr}\newline{marc.peigne@lmpt.univ-tours.fr}

\end{document}